\apptocmd{\sloppy}{\hbadness 10000\relax}{}{}
\title{Extending one-forms on $F$-regular singularities}
\author{Tatsuro Kawakami \and Kenta Sato}
\address{Graduate School of Mathematical Sciences, University of Tokyo, 3-8-1 Komaba,
Meguro-ku, Tokyo 153-8914, Japan}
\email{kawakami@ms.u-tokyo.ac.jp}
\address{Department of Mathematics and Informatics, Chiba university, Chiba, 263-8522, Japan} 
\email{sato@math.s.chiba-u.ac.jp}
\def\ge{\geqslant}
\def\le{\leqslant}
\def\phi{\varphi}
\def\epsilon{\varepsilon}
\def\mapsto{\longmapsto}
\def\into{\lhook\joinrel\longrightarrow}
\def\onto{\relbar\joinrel\twoheadrightarrow}
\def\log{\operatorname{log}}
\def\Hom{\operatorname{Hom}}
\def\Spec{\operatorname{Spec}}
\def\Supp{\operatorname{Supp}}
\def\codim{\operatorname{codim}}
\def\Div{\operatorname{div}}
\def\Im{\operatorname{im}}
\def\Ker{\operatorname{ker}}
\def\Exc{\operatorname{Exc}}
\def\chara{\operatorname{char}}
\def\Sym{\operatorname{Sym}}
\def\res{\operatorname{res}}
\def\cod{\operatorname{cod}}
\def\restr{\operatorname{restr}}
\def\m{{\mathfrak m}}
\def\n{{\mathfrak n}}
\def\p{{\mathfrak p}}
\newcommand{\N}{\mathbb{N}}
\newcommand{\Q}{\mathbb{Q}}
\newcommand{\Z}{\mathbb{Z}}
\newcommand{\sO}{\mathcal{O}}
\newcommand{\bolda}{\boldsymbol{a}}
\newcommand{\boldb}{\boldsymbol{b}}
\newcommand{\boldx}{\boldsymbol{x}}
\newcommand{\cExt}{{\mathop{\mathcal{E}\! \mathit{xt}}}}
\newcommand{\choosing}[2]{\begin{pmatrix} #1 \\ #2 \end{pmatrix}}
\newcommand{\Map}{\mathrm{Map}}
\newcommand{\boldv}{\boldsymbol{v}}
\newcommand{\stacksproj}[1]{{\cite[\href{https://stacks.math.columbia.edu/tag/#1}{Tag~{#1}}]{stacks-project}}}
\newsavebox{\pullback}
\sbox\pullback{%
\begin{tikzpicture}%
\draw (0,0) -- (1ex,0ex);%
\draw (1ex,0ex) -- (1ex,1ex);%
\end{tikzpicture}}
\newsavebox{\pullbackdl}
\sbox\pullbackdl{%
\begin{tikzpicture}%
\draw (-1ex,0ex) -- (0ex,0ex);%
\draw (0ex,-1ex) -- (0ex,0ex);%
\end{tikzpicture}}
\newsavebox{\pushoutdr}
\sbox\pushoutdr{%
\begin{tikzpicture}%
\draw (-1ex,-1ex) -- (-1ex,0ex);%
\draw (-1ex,0ex) -- (0ex,0ex);%
\end{tikzpicture}}
\theoremstyle{plain}
\newtheorem{thm}{Theorem}[section] 
\newtheorem{cor}[thm]{Corollary}
\newtheorem{prop}[thm]{Proposition}
\newtheorem{deflem}[thm]{Definition-Lemma}
\newtheorem{lem}[thm]{Lemma}
\theoremstyle{definition} 
\newtheorem{defn}[thm]{Definition}
\theoremstyle{remark}
\newtheorem{rem}[thm]{Remark}
\newtheorem{defn and notation}[thm]{Definition and Notation}
\newtheorem{NOTATION}[thm]{Notation}
\newtheorem*{cl}{Claim}
\theoremstyle{plain}
\newtheorem{theo}{Theorem}
\numberwithin{equation}{thm}
\keywords{Differential forms; Singularities; Positive characteristic}
\subjclass[2020]{14F10, 13A35, 14B05}
\begin{document}
\tolerance = 9999

\begin{abstract}
We prove the logarithmic extension theorem for one-forms on strongly $F$-regular singularities. Additionally, we establish the logarithmic extension theorem for one-forms on three-dimensional klt singularities in characteristic $p>41$.
To this end, we reduce the problem to the logarithmic extension theorem for two-dimensional klt singularities with imperfect residue fields using a technique based on Cartier operators.
\end{abstract}

\maketitle
\markboth{Tatsuro Kawakami and Kenta Sato}{EXTENSION THEOREM FOR $F$-REGULAR SINGULARITIES}

\tableofcontents

\section{Introduction}

Let $X$ be a normal variety over a perfect field, and let $ f \colon Y\to X$ be a proper birational morphism with reduced $f$-exceptional divisor $E$. 
A fundamental question is whether every $i$-form on the smooth locus $U$ of $X$ extends to a logarithmic $i$-form on the simple normal crossing locus $V$ of $(Y,E)$.  
This condition is equivalent to the surjectivity of the restriction map:  
\[
f_{*}\Omega^{[i]}_Y(\log E)\coloneqq \iota_{*}\Omega^{i}_{V}(\log E|_V) \hookrightarrow \Omega^{[i]}_X\coloneqq j_{*}\Omega^{i}_{U},
\]  
or to the reflexivity of $f_{*}\Omega^{[i]}_Y(\log E)$, where $ j\colon U\hookrightarrow X$ and $\iota\colon V\hookrightarrow Y$ denote the inclusions.  
When this surjectivity holds for $X$, we say that $X$ satisfies the \textit{logarithmic extension theorem for $i$-forms}.  
If $X$ satisfies the stronger condition that the morphism
\[
f_{*}\Omega^{[i]}_Y\hookrightarrow \Omega^{[i]}_X,
\]  
is surjective, then we say that $X$ satisfies the \textit{regular extension theorem for $i$-forms}.

The study of extension theorems in characteristic zero has a rich history (see \cite{SvS85, Flenner88, Namikawa, GKK, GKKP, Graf21, KS21} for example).
Here, we focus on klt singularities, which play a crucial role in the minimal model program. For klt singularities, Greb--Kebekus--Kov\'{a}cs--Peternell \cite{GKKP} proved the regular extension theorem (see also \cite{KS21} for generalizations to rational singularities).

In positive characteristic, the logarithmic extension theorem fails for klt singularities even in dimension two \cite[Example 10.1]{Gra}. This motivates considering `strong $F$-regularity' instead, as strongly $F$-regular singularities provide a natural positive characteristic analog of klt singularities (see \cite{Hara98, Hara-Watanabe} for example).
Indeed, Graf \cite{Gra} proved that two-dimensional strongly $F$-regular singularities satisfy the logarithmic extension theorem (see also \cite[Theorem 4.7]{Kaw1}). 
In the proof, he observed the dual graph of minimal resolutions carefully, and therefore
generalizing his result to higher dimensions remains challenging due to the lack of a classification for higher-dimensional singularities.

Nonetheless, we establish the following result, which shows that strongly $F$-regular singularities satisfy the logarithmic extension theorem for one-forms in any dimension.

\begin{theo}[see Theorem \ref{thm:key} for a more general statement]\label{Introthm:F-regular}
    Let $X$ be a strongly $F$-regular variety over a perfect field of positive characteristic.
    Then $X$ satisfies the logarithmic extension theorem for one-forms, i.e., for any proper birational morphism $f\colon Y\to X$ with reduced $f$-exceptional divisor $E$, the natural restriction 
    \[
    f_{*}\Omega^{[1]}_Y(\log E)\hookrightarrow \Omega^{[1]}_X
    \]
    is surjective.
\end{theo}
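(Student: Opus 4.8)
The plan is to reduce to a local statement, then --- by general hyperplane sections --- to two-dimensional singularities over a possibly imperfect residue field, and to settle that case by a Cartier-operator argument on the minimal resolution. First, surjectivity of $f_{*}\Omega^{[1]}_{Y}(\log E)\hookrightarrow\Omega^{[1]}_{X}$ may be tested on stalks, so we may assume $X=\Spec R$ with $(R,\m)$ strongly $F$-regular local over a perfect field; as recalled in the introduction, the assertion is equivalent to reflexivity of $f_{*}\Omega^{[1]}_{Y}(\log E)$. Since this sheaf is torsion-free and coincides with $\Omega^{[1]}_{X}$ on the smooth locus $X_{\reg}$, whose complement $Z:=X\setminus X_{\reg}$ has codimension $\ge 2$, it suffices to establish its $S_{2}$-property along $Z$. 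We will use two consequences of strong $F$-regularity throughout: $R$ is $F$-rational, hence has rational singularities, and $R$ is $F$-split, so that Frobenius acts injectively on each local cohomology module $H^{i}_{Z}(\sO_{X})$.

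To verify the $S_{2}$-property of $f_{*}\Omega^{[1]}_{Y}(\log E)$ at a point $x\in Z$ with $\dim\sO_{X,x}\ge 3$, I would cut $X$ by general hyperplanes through $x$. A Bertini-type theorem for strong $F$-regularity preserves the hypothesis on the section, and the conormal sequence relates $\Omega^{[1]}$ of $X$ to that of the section; the Cartier operator is invoked here to control the error terms obstructing the commutation of $f_{*}$ with restriction, replacing the vanishing theorems that are unavailable in positive characteristic. Iterating, one is reduced to the logarithmic extension theorem for one-forms on a two-dimensional strongly $F$-regular local ring whose residue field --- the function field of a codimension-two component of $Z$ --- may be imperfect.

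For the surface case, one first reduces by a standard argument to the minimal resolution $f\colon Y\to X$, with reduced (and snc) exceptional divisor $E$. The logarithmic Cartier operator furnishes the exact sequence
\[
0\longrightarrow \sO_{Y}\xrightarrow{\ F\ }F_{*}\sO_{Y}\xrightarrow{\ d\ }F_{*}Z^{1}_{Y}(\log E)\xrightarrow{\ C\ }\Omega^{1}_{Y}(\log E)\longrightarrow 0,
\]
with $Z^{1}_{Y}(\log E)$ the sheaf of closed logarithmic one-forms. Pushing it forward along $f$ and using $R^{>0}f_{*}\sO_{Y}=0$ --- which also annihilates the relevant higher direct images of $F_{*}\sO_{Y}$ and of $\Im(d)$ --- identifies $f_{*}\Omega^{1}_{Y}(\log E)$ with the image of the Cartier operator on $F_{X*}f_{*}Z^{1}_{Y}(\log E)$, while the analogous computation on $X_{\reg}$, combined with Frobenius-injectivity on $H^{2}_{\m}(\sO_{X})$, describes $\Omega^{[1]}_{X}$ up to an error term controlled by the cokernel of Frobenius on $H^{2}_{\m}(\sO_{X})$. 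The local engine is twofold: the Cartier operator carries a closed form whose poles are bounded by an effective divisor $D$ supported on $E$ into $\Omega^{1}_{Y}(\lceil D/p\rceil)$, and a closed one-form whose poles are bounded by the reduced divisor $E$ is automatically logarithmic along $E$. Iterating the Cartier operator therefore drives polar divisors down to their reduced parts; what remains is a pole-order reduction in which strong $F$-regularity, through its defining splittings, supplies the closed lifts needed over the singular $X$ and annihilates the error term above. Where this does not suffice directly, one falls back on the classification of two-dimensional strongly $F$-regular singularities in the spirit of Graf \cite{Gra}, now carried out over the imperfect residue field.

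The main obstacle is precisely this two-dimensional case over imperfect residue fields. Graf's argument inspects dual graphs of minimal resolutions over an algebraically closed field, and the residue calculus for logarithmic forms becomes more delicate when the residue field is imperfect --- residues of closed forms then lie in its $p$-th power subfield --- so both the Cartier-operator bookkeeping and the surface analysis must be reworked without the perfectness hypothesis; a secondary difficulty is to exploit the full strength of strong $F$-regularity, beyond mere $F$-splitting, in order to kill the codimension-two error term and to produce the closed lifts used in the pole-reduction step.
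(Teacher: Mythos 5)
Your high-level plan (reduce to a two-dimensional strongly $F$-regular local ring with possibly imperfect residue field, then argue via Cartier operators and a Graf-type analysis of the minimal resolution) points in the right direction, but the two load-bearing steps are not actually argued, and the first one is set up in a way that would not work. For the reduction from $\dim X\geq 3$ to dimension two you propose cutting by general hyperplanes through a point of the singular locus and controlling, via the conormal sequence, ``the error terms obstructing the commutation of $f_*$ with restriction'' by the Cartier operator. No mechanism is given for this, and none is apparent: restricting $f_*\Omega^{[1]}_Y(\log E)$ to a hyperplane section neither compares it to the corresponding pushforward on the section nor yields the needed depth estimate without vanishing statements that are unavailable in characteristic $p$. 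Moreover, cutting at a (closed) point produces residue fields finite over the perfect base field, hence perfect, so this route does not even produce the imperfect-residue-field surface case you invoke; imperfect residue fields arise by localizing at codimension-two points, which is what the paper does. The paper's actual bridge is different: by \cite[Theorem A]{Kaw4} the logarithmic extension theorem follows from surjectivity of the reflexive Cartier operator $C\colon Z\Omega^{[1]}_X\to\Omega^{[1]}_X$; one proves this surjectivity in codimension two (equivalently, the extension theorem for the two-dimensional localizations, via \cite[Proposition 4.4]{Kaw4}), and then propagates it across loci of codimension $\geq 3$ using the exact sequence $0\to B\Omega^{[1]}_X\to Z\Omega^{[1]}_X\to\Omega^{[1]}_X$ together with a depth/local-cohomology argument: in the strongly $F$-regular (in general, quasi-$F$-split and $(S_3)$) case the splitting forces the relevant map $H^2_Z(B_n\Omega^{[1]})\to H^2_Z(B\Omega^{[1]})$ to vanish, and a diagram chase with the iterated operators $C_n$ gives surjectivity of $C$ (Proposition \ref{prop:key}). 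Your sketch has no counterpart of this step.

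The second gap is the surface case itself, which is the heart of the theorem. You outline a pole-reduction scheme on the minimal resolution using $R^{>0}f_*\sO_Y=0$ and Frobenius injectivity on $H^2_{\m}(\sO_X)$, and then say that where this does not suffice one ``falls back on the classification \dots now carried out over the imperfect residue field.'' But that classification, and the extension theorem it feeds into, are precisely what has to be proved and what occupies most of the paper: geometric reducedness of the exceptional curves of the minimal resolution of a strongly $F$-regular surface singularity (Theorem \ref{thm:SFR to Geom.red}), the extension of Hara's classification to imperfect residue fields including the new ``twisted'' dual graphs (Theorem \ref{thm:appendixmain}), and the extension theorem itself, which is proved not by pole reduction on the minimal resolution but via tame decompositions and residue/restriction sequences for $\mathcal{C}$-differentials (Campana pairs) over imperfect fields, constructed through quasi-\'etale index-one covers (Theorem \ref{thm:reside and restr ex seq}, Proposition \ref{prop:LET for tame dlt pair}, Theorem \ref{mainthm:2-dim}). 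New phenomena over imperfect $\kappa$ (e.g.\ $\Omega^1$ of the exceptional curves is no longer of rank one over $\kappa$, exceptional curves need not be geometrically reduced, and explicit RDP equations for Fedder-type computations are unavailable) mean this is genuinely new content rather than a routine rerun of Graf's argument, so deferring to it without proof leaves the proposal incomplete at its central point.
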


In fact, our theorem is more general: it suffices to assume that 
(1) $X$ is strongly $F$-regular in codimension 2, 
(2) $X$ is quasi-$F$-injective in codimension 3 (see Section \ref{sec:qFinj} for details), and 
(3) $X$ satisfies Serre's condition $(S_4)$. We refer to Theorem \ref{thm:key} for further details.

Notably, the regular extension theorem does not hold in general in this setting. For instance, certain toric singularities in every characteristic $p>0$ violate the regular extension theorem (see \cite[Proposition 1.5]{Langer19} or \cite[Example 10.2]{Gra}).

The regular extension theorem for one-forms on klt singularities was originally proven by Greb--Kebekus--Kov\'{a}cs \cite{GKK} using Steenbrink-type vanishing, which relies on mixed Hodge theory.
Theorem \ref{Introthm:F-regular} provides a purely algebraic proof for the regular extension theorem for one-forms on klt singularities in characteristic zero, using their characteristic $p$-models (see Corollary \ref{cor:RET for klt in ch=0}).

\subsection{Klt threefolds in characteristic \texorpdfstring{$p>41$}{p>41}}
As discussed earlier, the klt assumption alone is insufficient to guarantee the logarithmic extension theorem in positive characteristic. However, we can still expect it to hold under additional assumptions on the characteristic $p>0$.
For instance, two-dimensional klt singularities in characteristic $p>5$ are
$F$-regular, and hence they satisfy the logarithmic extension theorem. Furthermore, \cite[Theorem E]{KTTWYY2} established that three-dimensional terminal singularities $X$ over a perfect field of characteristic
$p>41$ satisfy the logarithmic extension theorem for one-forms. The condition $p>41$ is used to ensure that 
$X$ is quasi-$F$-pure--a weaker notion of $F$-purity (see \cite[Section 9]{KTTWYY2} for further details).

In this paper, we also generalize \cite[Theorem E]{KTTWYY2} to three-dimensional klt singularities.
\begin{theo}\label{Introthm:klt}
    Let $X$ be a klt threefold over a perfect field of characteristic $p>41$.
    Then $X$ satisfies the logarithmic extension theorem for one-forms.
\end{theo}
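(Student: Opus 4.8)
The plan is to reduce the assertion to the logarithmic extension theorem for one-forms on two-dimensional klt singularities whose residue field is $F$-finite but possibly imperfect, and to perform this reduction by means of Cartier operators, using that a klt threefold in characteristic $p>41$ is quasi-$F$-pure. First I would pass to the local situation: since the surjectivity is local on $X$, the formation of $f_{*}\Omega^{[1]}_Y(\log E)$ commutes with localization, $\Omega^{[1]}_X$ is reflexive, and it suffices to verify surjectivity for a single log resolution of $X$, so we may assume $X=\Spec R$ with $(R,\m)$ a three-dimensional klt local ring with perfect residue field and $f\colon Y\to X$ a log resolution; it is then enough to show that a given reflexive one-form $\omega$ on $X$, pulled back to $Y$, has pole order at most one along every $f$-exceptional prime divisor $E_{i}$.

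Next I would cut down by hyperplanes. If $f(E_{i})$ is a curve, localizing at its generic point produces a two-dimensional klt local ring whose residue field is the function field of a curve over a perfect field, hence $F$-finite and imperfect of $p$-degree one; since $p>5$ such a singularity is strongly $F$-regular, and the $F$-finite analogue of Theorem~\ref{Introthm:F-regular} --- equivalently, Graf's dual-graph argument carried out over an imperfect residue field --- bounds the pole order there by one. If instead $E_{i}$ is contracted to the closed point $\m$, I would take a general pencil of sufficiently ample hypersurface sections through $\m$; its generic member $H$ is a two-dimensional klt variety over $\kappa:=k(t)$, with $t$ the parameter of the pencil, which is imperfect of $p$-degree one, and the corresponding generic member $\widetilde H$ on $Y$ is smooth and meets $E_{i}$ in a divisor exceptional over $\m$. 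Restricting $\omega$, base-changed to $\kappa$, to $\widetilde H$ and applying the two-dimensional extension theorem over $\kappa$ bounds the pole order of $\omega|_{\widetilde H}$ along $E_{i}\cap\widetilde H$ by one; however, the component of $\omega$ lying in the conormal direction of $H$ restricts to zero, so this argument by itself controls only the ``horizontal'' part of $\omega$ along $E_{i}$.

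The crux is recovering the remaining, conormal part, and this is where the Cartier operator and the hypothesis $p>41$ enter. On $Y$ the Cartier operator $C$, defined on the sheaf of closed logarithmic one-forms, is surjective onto $\Omega^{1}_Y(\log E)$ and sends a form with a pole of order $n$ along $E_{i}$ to one with a pole of order at most $\lceil n/p\rceil$, so after $e\gg 0$ iterations any closed rational form of finite pole order becomes logarithmic. Using that $R$ is quasi-$F$-pure --- which holds because $p>41$ by \cite{KTTWYY2} --- one produces, for every $e$, a closed reflexive one-form on $X$ whose image under $C^{e}$ is the closed part of $\omega$ and whose pullback to $Y$ has pole order bounded independently of $e$; applying $C^{e}$ then forces the pole order of the closed part of $\omega$ along $E_{i}$ down to one. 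The non-closed part is handled by differentiating, so that $d\omega\in\Omega^{[2]}_X$ and its pullback to $Y$ can again be analyzed after restriction to $\widetilde H$ and via the Cartier decomposition of $\Omega^{1}$ near the generic point of $E_{i}$. Combining the two cases, the pullback of every reflexive one-form on $X$ to $Y$ lies in $f_{*}\Omega^{[1]}_Y(\log E)$, which is the desired surjectivity.

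I expect the main obstacle to be the third step: extracting from quasi-$F$-purity a lift of $\omega$ along iterated Cartier operators with pole order uniformly bounded on the resolution, and cleanly separating and controlling the non-closed part; a secondary technical point is checking that the structure theory of two-dimensional klt singularities and the Graf-type extension argument go through over an imperfect $F$-finite residue field.
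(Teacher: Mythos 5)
Your overall strategy---reduce to the two-dimensional logarithmic extension theorem over imperfect $F$-finite residue fields at codimension-two points, and use Cartier operators together with quasi-$F$-purity (valid for $p>41$ by \cite{KTTWYY2}) to handle the remaining locus---is exactly the strategy of the paper, and your treatment of the exceptional divisors dominating a curve matches the paper's codimension-two reduction (Theorem \ref{Introthm:2-dim} via Lemma \ref{lem:exactness in codim two}). The genuine gap is in your third step, the divisors lying over the closed point. There is no canonical decomposition of a reflexive one-form $\omega$ into a ``closed part'' and a ``non-closed part,'' and quasi-$F$-purity does not by itself ``produce, for every $e$, a closed reflexive one-form whose image under $C^{e}$ is the closed part of $\omega$ with pole order on $Y$ bounded independently of $e$'': such uniform lifts along iterated Cartier operators are precisely what has to be proved, and your hyperplane-pencil argument, as you note, only sees the horizontal directions. (A smaller slip: bounding the pole order along $E_i$ by one is weaker than membership in $\Omega^1_Y(\log E)$ at the generic point of $E_i$; one needs in addition that $d\omega$ has at most a simple pole, or one must argue through the logarithmic sheaf directly.)

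The paper closes this gap by never lifting forms directly over the closed point. By Theorem \ref{thm:Kaw4} it suffices to prove surjectivity of the reflexive Cartier operator $C\colon Z\Omega^{[1]}_X\to\Omega^{[1]}_X$ (Theorem \ref{thm:main}), and this is done in Proposition \ref{prop:key}: on the open set $U$ whose complement $Z$ has codimension $\geq 3$, the maps $C_n$ are surjective because $X$ is strongly $F$-regular in codimension two and Theorem \ref{Introthm:2-dim} applies (via \cite[Proposition 4.4]{Kaw4}); over $Z$ one instead kills the obstruction class cohomologically. Quasi-$F$-splitting (after a $\mathbb{Q}$-factorialization, so that \cite[Theorem A]{KTTWYY2} applies) splits $\sO_X\to Q_{X,n}$, and together with the vanishing $H^2_Z(F_*\sO_X)=0$---which requires $X$ to satisfy $(S_3)$, supplied by the Cohen--Macaulayness of three-dimensional klt singularities in $p>5$ \cite{ABL}, an input absent from your proposal---this forces $C_{n-1}\colon H^1(U,B_n\Omega^{[1]}_U)\to H^1(U,B\Omega^{[1]}_U)$ to be zero (using $\mathcal{B}_{n,U}\cong B_n\Omega^{[1]}_U$, Lemma \ref{lem:B vs BOmega}); a diagram chase with the sequences $0\to B_n\Omega^{[1]}\to Z_n\Omega^{[1]}\to\Omega^{[1]}\to 0$ then yields surjectivity of $C$ on all of $X$. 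So the missing ideas in your proposal are (i) the reduction of the extension theorem itself to surjectivity of the reflexive Cartier operator, and (ii) the local-cohomology mechanism (quasi-$F$-splitting plus $(S_3)$) that replaces any pole-order bookkeeping over the closed point.
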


\subsection{Lipman--Zariski type theorem}
In characteristic zero, it is conjectured that singularities with free tangent sheaves are smooth. This is known as the Lipman--Zariski Conjecture \cite{Lipman65}. The conjecture has been confirmed in numerous cases, and specifically for klt singularities, it was proven in \cite[Theorem 6.1]{GKKP} (see also \cite{Druel, Graf-Kovacs} for generalizations to log canonical singularities).

In contrast, in characteristic $p>0$, Lipman \cite{Lipman65} observed that rational double points (RDPs) of type 
$A_n$ violate the conjecture when $n+1$ is divisible by $p$. In these cases, the RDPs possess free tangent sheaves (see \cite{Graf(LZ-conjecture)} for further examples in positive characteristic). Notably, these singularities are all strongly 
$F$-regular, and even more specifically, they are toric.

As such, achieving smoothness appears unattainable in characteristic $p>0$, even under strong assumptions beyond the freeness of the tangent sheaves. Instead, we prove the following result: strongly 
$F$-regular singularities with free tangent sheaves are Frobenius liftable rather than smooth.

\begin{theo}\label{Introthm:LZ-conj}
    Let $X$ be a normal variety over a perfect field of characteristic $p>0$.
    Suppose that one of the following holds:
    \begin{enumerate}
        \item[\textup{(1)}] $X$ is strongly $F$-regular.
        \item[\textup{(2)}] $X$ is klt, $\dim\,X=3$, and $p>41$.
    \end{enumerate}
    If $T_X$ is locally free, then $X$ is locally Frobenius liftable.
\end{theo}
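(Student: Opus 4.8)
The plan is to realize the Frobenius lifting as a splitting of a reflexive Cartier operator; the hypothesis on $T_X$ enters through the fact that it makes $\Omega^{[1]}_X$ free, and freeness is exactly what forces such a splitting to exist over an affine base. Since the statement is local, we may assume $X=\Spec A$ is affine of dimension $n$. As $T_X$ is locally free and $\Omega^{[1]}_X\cong T_X^{\vee}$, the sheaf $\Omega^{[1]}_X$ is locally free of rank $n$; in particular $\omega_X\cong\sO_X$, so $X$ is Gorenstein. Thus in case~(1), $X$ is Gorenstein and strongly $F$-regular, and in case~(2) it is a Gorenstein klt --- hence canonical --- threefold with $p>41$; in both cases Theorem~\ref{Introthm:F-regular}, resp.\ Theorem~\ref{Introthm:klt}, shows that $X$ satisfies the logarithmic extension theorem for one-forms, and moreover $X$ has rational singularities.

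Let $j\colon U\hookrightarrow X$ be the smooth locus, and consider on $U^{(1)}$ the Cartier sequence $0\to B^1_{U}\to Z^1_{U}\xrightarrow{C}\Omega^1_{U^{(1)}}\to 0$. I set $Z^{[1]}_X:=j_*Z^1_{U}$ and $B^{[1]}_X:=j_*B^1_{U}$, coherent sheaves on $X^{(1)}$ fitting into an exact complex
\[
0\to B^{[1]}_X\to Z^{[1]}_X\xrightarrow{\;C\;}\Omega^{[1]}_{X^{(1)}}.
\]
The first step is to prove that $C$ is surjective. For this I pass to a log resolution $f\colon Y\to X$ with reduced exceptional divisor $E$ such that $(Y,E)$ is simple normal crossing: the logarithmic Cartier sequence on $Y^{(1)}$ is short exact, pushing forward by $f$ and using rationality of the singularities of $X$ to kill $R^1f_*B^1_Y(\log E)$ yields a surjection onto $f_*\Omega^1_{Y^{(1)}}(\log E^{(1)})$, and this last sheaf is identified with $\Omega^{[1]}_{X^{(1)}}$ by the logarithmic extension theorem quoted above; since $f_*Z^1_Y(\log E)$ is reflexive and restricts to $Z^1_U$ on $U$, it equals $Z^{[1]}_X$, and we obtain the surjectivity of $C$. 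The second step uses freeness: the sheaf $\Omega^{[1]}_{X^{(1)}}$ is free of rank $n$, being the Frobenius twist of the free sheaf $\Omega^{[1]}_X$, so $\operatorname{Ext}^1_{\sO_{X^{(1)}}}(\Omega^{[1]}_{X^{(1)}},B^{[1]}_X)\cong H^1(X^{(1)},B^{[1]}_X)^{\oplus n}=0$ because $X^{(1)}=\Spec A^{(1)}$ is affine; hence the short exact sequence splits and we obtain an $\sO_{X^{(1)}}$-linear section $\varphi\colon\Omega^{[1]}_{X^{(1)}}\to Z^{[1]}_X$ of $C$. This is the only place the hypothesis on $T_X$ is used.

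The third --- and hardest --- step is to manufacture from the splitting $\varphi$ an honest Frobenius lifting of $X$ over $W_2(k)$. In the smooth affine case a splitting of the Cartier operator reconstructs a lift of the relative Frobenius $F_{X/k}$ by the familiar recipe --- one lifts a set of generators and reads off the required correction terms from $\varphi$ --- and the content here is to carry this out in the presence of singularities, i.e.\ to build simultaneously a flat $W_2(k)$-algebra $\widetilde A$ lifting $A$ and a lift $\widetilde F\colon\widetilde A\to\widetilde A$ of Frobenius. Here I would exploit that $\varphi$ is defined on all of $X$, that $A$ is normal (so functions and one-forms are determined on the smooth locus), and that $A$ is Cohen--Macaulay with $X\setminus U$ of codimension at least two, so that the obstructions to extending the lift across $X\setminus U$, which live in local cohomology groups supported along $X\setminus U$, can be controlled. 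This verification is the main obstacle: unlike in the smooth affine case the abstract $W_2(k)$-lift is not given for free, so it must be produced together with $\widetilde F$ out of $\varphi$ and the $F$-singularity hypotheses. Granting it, $X$ is locally Frobenius liftable, as desired.
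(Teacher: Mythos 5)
Your overall skeleton (surjectivity of the reflexive Cartier operator, plus local freeness of $\Omega^{[1]}_X\cong\mathcal{H}om(T_X,\sO_X)$ to split it over an affine chart, plus ``splitting $\Rightarrow$ Frobenius lift'') is the same as the paper's, but the way you establish the surjectivity of $C\colon Z\Omega^{[1]}_X\to\Omega^{[1]}_X$ has a genuine gap. You pass to a global log resolution $f\colon Y\to X$ and kill the obstruction in $R^1f_*B\Omega^1_Y$ by invoking ``rationality of the singularities,'' i.e.\ $R^1f_*\sO_Y=R^2f_*\sO_Y=0$. In case (1) the dimension of $X$ is arbitrary, so two things fail: log resolutions are not known to exist in characteristic $p$ in dimension $\geq 4$, and strong $F$-regularity is only known to give pseudo-rationality (via $F$-rationality), which in positive characteristic does \emph{not} yield the vanishing of $R^1f_*\sO_Y$ and $R^2f_*\sO_Y$ for a resolution; whether strongly $F$-regular singularities are rational in this sense is not available in this generality, and you give no argument for it. This is precisely the point the paper is designed to avoid: there, surjectivity of $C$ (Theorem \ref{thm:main}) is proved by a codimension-two reduction (Proposition \ref{prop:key}) using that $F_*\sO_X$ satisfies $(S_3)$, quasi-$F$-purity, a local-cohomology argument with $\mathcal{B}_{n,X}\cong B_n\Omega^{[1]}_X$, and the new two-dimensional logarithmic extension theorem over imperfect residue fields (Theorem \ref{Introthm:2-dim}); log resolutions and rationality are only ever used for two-dimensional local rings, where they are unproblematic. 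Your route might be repaired in case (2) (threefolds admit resolutions and klt threefold singularities in large characteristic are rational and Cohen--Macaulay), but it does not cover case (1), which is the main statement. A smaller inaccuracy in the same step: $f_*Z\Omega^1_Y(\log E)$ need not be reflexive (pushforwards of reflexive sheaves under proper birational maps are only torsion-free, cf.\ $f_*\sO_Y(-E)$ for a point blow-up); what you actually need, and what does hold, is a natural map $f_*Z\Omega^1_Y(\log E)\to Z\Omega^{[1]}_X$ compatible with $C$.

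The last step is also left open in your write-up: you acknowledge that producing a flat $W_2(k)$-lift of $\Spec A$ together with a Frobenius lift from the splitting $\varphi$ is ``the main obstacle'' and you only grant it. This is exactly the content of the cited result \cite[Theorem 3.3]{Kawakami-Takamatsu}, which the paper uses verbatim: a local splitting of $0\to B\Omega^{[1]}_X\to Z\Omega^{[1]}_X\xrightarrow{C}\Omega^{[1]}_X\to 0$ on a normal variety implies local Frobenius liftability. If you cite that theorem the third step is immediate; as written, it is an unproved claim. (Your preliminary observation that $T_X$ locally free forces $\omega_X$ trivial and hence $X$ Gorenstein is fine but is not needed anywhere in the paper's argument.)
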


Here, we say that $X$ is \textit{Frobenius liftable} if $X$ lifts to $W_2(k)$ with its Frobenius morphism.
Furthermore, we say $X$ is \textit{locally Frobenius liftable} if it is Frobenius liftable Zariski locally.
It has become evident that Frobenius liftability imposes strong conditions on singularities (see \cite[Theorem B]{Kawakami-Witaszek} for example).

\subsection{Strategy of proofs}

All of Theorems \ref{Introthm:F-regular}, \ref{Introthm:klt}, and \ref{Introthm:LZ-conj} are reduced to the same problem: the surjectivity of 
the first reflexive Cartier operator:
\[
C \colon Z\Omega_X^{[1]}\to \Omega_X^{[1]}.
\]
We briefly recall reflexive Cartier operators (see Section \ref{subseq:Cartier} for details).
Let $j\colon U\hookrightarrow X$ be the inclusion of the smooth locus $U$ of $X$.
We define the locally free $\sO_U$-modules as follows:
 \begin{align*}
    &B\Omega^i_U\coloneqq \mathrm{im}(F_{*}d \colon F_{*}\Omega^{i-1}_U\to F_{*}\Omega^{i}_U)\,\,\text{and}\\
    &Z\Omega^{i}_U\coloneqq \mathrm{ker}(F_{*}d \colon F_{*}\Omega^{i}_U \to F_{*}\Omega^{i+1}_U)
\end{align*}
for all $i\geq 0$, where $F_{*}d$ denotes the pushforward of the differential map by the Frobenius morphism.
Using the Cartier isomorphism \cite[Theorem 1.3.4]{fbook}, we have the short exact sequence:
\[
0 \to B\Omega^{i}_U\to Z\Omega^i_U\xrightarrow{C} \Omega^{i}_U\to 0.
\]
By pushing forward via $j$, we obtain an exact sequence:
\begin{equation}\label{eq:reflexive Cartier exact sequence}
    0 \to B\Omega^{[i]}_X\to Z\Omega^{[i]}_X\xrightarrow{C} \Omega^{[i]}_X,
\end{equation}
where $C \colon Z\Omega_X^{[i]}\to \Omega_X^{[i]}$ is called \textit{the $i$-th reflexive Cartier operator}.
Note that, since pushforward is not generally right-exact, the reflexive Cartier operator is not always surjective.
If the reflexive Cartier operator is surjective, then we can show that $X$ satisfies the logarithmic extension theorem.

\begin{thm}[\textup{\cite[Theorem A]{Kaw4}}]\label{thm:Kaw4}
Let $X$ be a normal variety over a perfect field of positive characteristic. 
We fix an integer $i\geq 0$.
If the reflexive Cartier operator 
\[C \colon Z\Omega_X^{[i]}\to \Omega_X^{[i]}
\]
is surjective, then $X$ satisfies the logarithmic extension theorem for $i$-forms.
\end{thm}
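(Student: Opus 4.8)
The plan is to restate the logarithmic extension theorem as the vanishing of the cokernel $\mathcal Q$ of the restriction map, to squeeze $\mathcal Q$ between itself and its Frobenius pushforward by means of the hypothesis, and finally to eliminate $\mathcal Q$ by a reduction to low dimension. As a first step I would make two harmless reductions: since the logarithmic extension theorem for $i$-forms can be checked on a single log resolution (two log resolutions are dominated by a common one, and smooth varieties satisfy the logarithmic extension theorem), I may take $f\colon Y\to X$ to be a log resolution that is an isomorphism over $U=X_{\mathrm{reg}}$, with $E$ the reduced $f$-exceptional divisor, so that $(Y,E)$ is a smooth pair and $E$ lies over $X\setminus U$; and since the question is local on $X$, I may assume $X$ affine, so that $\Gamma(X,-)$ is exact on quasi-coherent sheaves. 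Write $\mathcal W\coloneqq f_*\Omega^i_Y(\log E)\subseteq\Omega^{[i]}_X$ and $\mathcal Q\coloneqq\Omega^{[i]}_X/\mathcal W$; the goal is $\mathcal Q=0$.

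\textbf{Squeezing $\mathcal Q$.} On the smooth pair $(Y,E)$ the iterated logarithmic Cartier isomorphism yields locally free sheaves $B_m\Omega^i_Y(\log E)\subseteq Z_m\Omega^i_Y(\log E)\subseteq F^m_*\Omega^i_Y(\log E)$ with surjections $C^m\colon Z_m\Omega^i_Y(\log E)\twoheadrightarrow\Omega^i_Y(\log E)$; viewing everything inside the rational forms $\Omega^i_{k(X)}=\Omega^i_{k(Y)}$, these are compatible with the reflexive sheaves $Z_m\Omega^{[i]}_X\subseteq F^m_*\Omega^{[i]}_X$ on $X$, because $Z_m$ (resp.\ $B_m$) is the intersection of the ``$m$-times closed'' (resp.\ ``$m$-times exact'') rational forms with the relevant locally free sheaf and $f$ is an isomorphism over $U$. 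Two observations then stand out: (i) $C$ sends a pullback of a logarithmic form to a pullback of a logarithmic form, so $C^m$ carries $Z_m\Omega^{[i]}_X\cap F^m_*\mathcal W$ into $\mathcal W$; and (ii) $f_*Z_m\Omega^i_Y(\log E)=Z_m\Omega^{[i]}_X\cap F^m_*\mathcal W$, since a rational $i$-form that is ``$m$-times closed'' and whose pullback has at most logarithmic poles along $E$ is automatically a section of $Z_m\Omega^i_Y(\log E)$. Now invoke the hypothesis: iterating it, $C^m\colon Z_m\Omega^{[i]}_X\to\Omega^{[i]}_X$ is surjective, so the composite $Z_m\Omega^{[i]}_X\xrightarrow{C^m}\Omega^{[i]}_X\to\mathcal Q$ is surjective; by (i) it kills $f_*Z_m\Omega^i_Y(\log E)$, hence factors as a surjection $\mathcal K_m\coloneqq Z_m\Omega^{[i]}_X/f_*Z_m\Omega^i_Y(\log E)\twoheadrightarrow\mathcal Q$. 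On the other hand, by (ii), $\mathcal K_m\cong(Z_m\Omega^{[i]}_X+F^m_*\mathcal W)/F^m_*\mathcal W\hookrightarrow F^m_*\Omega^{[i]}_X/F^m_*\mathcal W=F^m_*\mathcal Q$. Thus, for every $m\ge 1$, $\mathcal Q$ is a quotient of a subsheaf of $F^m_*\mathcal Q$, compatibly with $C$.

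\textbf{Vanishing of $\mathcal Q$.} It remains to deduce $\mathcal Q=0$, and this is where the real work is. The subquotient relation above does not by itself suffice — a skyscraper sheaf carrying a Frobenius-linear endomorphism satisfies it — so one must use both that $\mathcal Q$ is a quotient of reflexive differentials and that $\Supp\mathcal Q$ has codimension $\ge 2$. I would reduce to $\dim X=2$: localising at a generic point $\xi$ of a minimal-codimension component of $\Supp\mathcal Q$ gives a normal local ring on which the reflexive Cartier operator is still surjective and on which $\mathcal Q$ has finite length, and one then cuts down to the two-dimensional situation — either via general hyperplane sections (using that validity of the extension theorem, unlike reflexivity of an arbitrary sheaf, does descend from general sections of the \emph{specific} sheaf $f_*\Omega^i_Y(\log E)$) or by treating that case directly, which forces one to work with two-dimensional singularities whose residue field may be imperfect, echoing the reduction-to-surfaces technique announced in the abstract. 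In dimension two the residue field of $\xi$ is a finite, hence perfect, extension of the base field, so $F^m_*$ preserves length; then $\mathcal K_{m,\xi}\hookrightarrow F^m_*\mathcal Q_\xi$ together with $\mathcal K_{m,\xi}\twoheadrightarrow\mathcal Q_\xi$ forces $\mathcal Q_\xi\cong F^m_*\mathcal Q_\xi$, and a hands-on analysis on the resolution of the surface singularity — using the surjectivity of $C$ on the local ring together with the differential $d$ — shows that no such nonzero sheaf can be a quotient of the reflexive one-forms, i.e.\ $\mathcal Q_\xi=0$.

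I expect the principal obstacle to be precisely this last, genuinely two-dimensional step: the formal part — commuting $f_*$ with the Cartier operators, the identity $f_*Z_m\Omega^i_Y(\log E)=Z_m\Omega^{[i]}_X\cap F^m_*\mathcal W$, and the resulting squeeze — is routine, but turning ``$\mathcal Q$ is a subquotient of $F^m_*\mathcal Q$ for all $m$'' into ``$\mathcal Q=0$'' requires genuinely exploiting that $\mathcal Q$ comes from differential forms, and that is essentially a (possibly imperfect-residue-field) surface computation.
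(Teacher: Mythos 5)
The paper does not prove this statement itself: it is imported as \cite[Theorem A]{Kaw4}, so there is no internal proof to compare against, and I will judge your argument on its own terms and against the cited proof's strategy. Your formal setup is correct as far as it goes: reducing to a single log resolution and to affine $X$, the identification $f_*Z_m\Omega^i_Y(\log E)=Z_m\Omega^{[i]}_X\cap F^m_*\mathcal W$ inside rational forms, the fact that the iterated (log) Cartier operator carries this intersection into $\mathcal W$, and the resulting surjection $\mathcal K_m\twoheadrightarrow\mathcal Q$ together with the inclusion $\mathcal K_m\hookrightarrow F^m_*\mathcal Q$ are all sound (the iterated surjectivity of $C$ is Lemma \ref{lem:surjectiviety of C and C_n}). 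The genuine gap is that the argument stops exactly where the theorem begins: you never show $\mathcal Q=0$. As you yourself concede, ``$\mathcal Q$ is a subquotient of $F^m_*\mathcal Q$ for all $m$'' is satisfied by any skyscraper with a Frobenius-semilinear endomorphism, and the step meant to exclude this --- a ``hands-on analysis on the resolution of the surface singularity'' showing that no such nonzero sheaf can arise --- is not an argument but a restatement of the theorem in the key case (and it silently slips from $i$-forms to one-forms). The reduction scaffolding is also flawed: localizing at the generic point $\xi$ of a codimension-$c$ component of $\Supp\mathcal Q$ in a higher-dimensional $X$ gives a residue field of transcendence degree $\dim X-c$ over $k$, which is finitely generated but in general imperfect, so ``finite, hence perfect'' fails and $F_*$ multiplies lengths by $[\kappa(\xi):\kappa(\xi)^p]$; the claimed descent of the extension problem along general hyperplane sections is asserted, not proved; and even in the honest surface case the equality $\mathcal Q_\xi\cong F^m_*\mathcal Q_\xi$ you extract carries no contradiction by itself.

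For comparison, the cited proof needs none of this and works in arbitrary dimension on one log resolution: since $\Omega^{[i]}_X$ and $f_*\Omega^i_Y(\log E)$ agree over $X_{\reg}$ and the former is coherent, there is a fixed effective $f$-exceptional divisor $D$ with $\Omega^{[i]}_X\subseteq f_*\bigl(\Omega^i_Y(\log E)(D)\bigr)$; by iterated surjectivity one writes a given reflexive $i$-form as $C_n(\eta)$ with $\eta\in Z_n\Omega^{[i]}_X$, and a local computation shows that each application of the logarithmic Cartier operator divides the pole order along $E$ by $p$ (up to rounding), so for $n\gg0$ the form $C_n(\eta)$ lies in $f_*\Omega^i_Y(\log E)$. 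That pole-decay estimate is the single missing idea in your proposal; it replaces, and makes unnecessary, the reduction to two-dimensional singularities with imperfect residue fields, which in the present paper serves a different purpose (establishing the surjectivity of $C$ itself via Proposition \ref{prop:key} and Theorem \ref{Introthm:2-dim}), not the proof of Theorem \ref{thm:Kaw4}.
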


By Theorem \ref{thm:Kaw4},
Theorems \ref{Introthm:F-regular} and \ref{Introthm:klt} are immediately reduced to the surjectivity of $C \colon Z\Omega_X^{[1]}\to \Omega_X^{[1]}$.
Furthermore, if $\Omega^{[1]}_X(\cong \mathcal{H}om_{\sO_X}(T_X,\sO_X))$ is locally free in addition, this map splits locally, implying that $X$ is locally Frobenius liftable \cite[Theorem 3.3]{Kawakami-Takamatsu}.
Consequently, Theorem \ref{Introthm:LZ-conj} is also established.

Thus, we focus on proving the surjectivity of $C \colon Z\Omega_X^{[1]}\to \Omega_X^{[1]}$.
For simplicity, we temporarily assume that $X$ is strongly $F$-regular.
By \eqref{eq:reflexive Cartier exact sequence}, we have a short exact sequence
\[
0\to B\Omega^{[1]}_X \to Z\Omega^{[1]}_X \to \mathrm{im}(C)\to 0
\]
and an inclusion $\mathrm{im}(C)\subset \Omega^{[1]}_X$.
Since $X$ is strongly $F$-regular, it can be shown that $B\Omega^{[1]}_X\cong F_{*}\sO_X/\sO_X$ satisfies the Serre condition $(S_3)$.
Therefore, if $\mathrm{im}(C)$ and $\Omega^{[1]}_X$ agree in codimension two, $\mathrm{im}(C)$ becomes reflexive, and thus coincides with $\Omega^{[1]}_X$.
This reduces the problem to proving the surjectivity of $C\colon Z\Omega^{[1]}_X \to \Omega^{[1]}_X$ in codimension two.

Consider a codimension two point $P\in X$, and set $(R,\m)=(\sO_{X,P},\m_P)$ and $S=\Spec\,R$.
The goal is to show the surjectivity of
\[
C\colon Z\Omega^{[1]}_S \to \Omega^{[1]}_S.
\]
As proven in \cite[Proposition 4.4]{Kaw4}, this surjectivity is equivalent to the logarithmic extension theorem for one-forms on $S$.
It is important to note that $S$ is a strongly $F$-regular surface singularity, although the residue field $R/\m$ is not perfect.

In the above discussion, we assumed that $X$ is strongly $F$-regular.
However, this condition can be weakened to the following:
$X$ is quasi-$F$-injective in codimension three, satisfies $(S_4)$, and is strongly $F$-regular in codimension two (see Theorem \ref{thm:key}). 
In particular, a variety $X$ in Theorem \ref{Introthm:klt} satisfies these conditions.

In conclusion, Theorems \ref{Introthm:F-regular}, \ref{Introthm:klt}, and \ref{Introthm:LZ-conj} 
are reduced to proving the logarithmic extension theorem for strongly $F$-regular surface singularities with imperfect residue fields.
Specifically, our main results are established by the following generalization of Graf's theorem \cite[Theorem 1.2]{Gra} to the case of an imperfect residue field:

\begin{theo}[\textup{Theorems \ref{mainthm:2-dim}} and \ref{mainthm:2-dim(lc)}]\label{Introthm:2-dim}
    Let $k$ be a perfect field of characteristic $p>0$.
    Let $(R,\m,\kappa\coloneqq R/\m)$ be a two-dimensional normal local domain essentially of finite type over $k$.
    Suppose that one of the following holds:
    \begin{enumerate}
        \item[\textup{(1)}] $S\coloneqq \Spec R$ is strongly $F$-regular. 
        \item[\textup{(2)}] $(S\coloneqq \Spec R,B)$ is lc and $p>5$, where $B$ is a reduced divisor on $S$.
    \end{enumerate}
    Then $(S,B)$ satisfies the logarithmic extension theorem for differential forms of any degree.
\end{theo}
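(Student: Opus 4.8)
The plan is to follow Graf's strategy for the perfect-residue-field case, replacing classification arguments with residue-field-robust computations wherever the perfectness of $\kappa$ was used. We must prove the surjectivity of the restriction $f_{*}\Omega^{[1]}_Y(\log E) \hookrightarrow \Omega^{[1]}_S$ for a proper birational $f\colon Y\to S$; by taking a common resolution it suffices to treat the minimal resolution $f\colon Y\to S$, with $E=\sum E_i$ the reduced exceptional divisor. Working over the closed point, we want to show that a given $1$-form $\omega \in \Omega^{[1]}_S$, which pulls back to a rational $1$-form on $Y$, in fact pulls back to a section of $\Omega^{1}_Y(\log E)$. Since $\Omega^{1}_Y(\log E)$ is locally free on $Y$, this is a question about the order of poles of $f^{*}\omega$ along each $E_i$, i.e.\ a valuation-theoretic statement at each divisorial valuation $v_i = \mathrm{ord}_{E_i}$.

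First I would reduce to a purely local statement at each exceptional curve: for each $i$, let $\eta_i$ be the generic point of $E_i$ and $(\sO_{Y,\eta_i},\mathfrak{n}_i)$ the corresponding DVR, whose residue field $\kappa(\eta_i)$ is a finitely generated (hence possibly imperfect) extension of $\kappa$ of transcendence degree $1$. The obstruction to $f^{*}\omega$ lying in $\Omega^{1}_Y(\log E)$ at $\eta_i$ is measured by a residue-type invariant. Here the key input is the logarithmic extension theorem reformulated via the reflexive Cartier operator (Theorem~\ref{thm:Kaw4} and \cite[Proposition 4.4]{Kaw4}): it is equivalent to prove that $C\colon Z\Omega^{[1]}_S \to \Omega^{[1]}_S$ is surjective. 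So I would instead aim directly at Cartier-operator surjectivity. Using \eqref{eq:reflexive Cartier exact sequence} we have $0\to B\Omega^{[1]}_S \to Z\Omega^{[1]}_S \to \mathrm{im}(C)\to 0$ with $\mathrm{im}(C)\subseteq \Omega^{[1]}_S$, and the cokernel $Q\coloneqq \Omega^{[1]}_S/\mathrm{im}(C)$ is supported at the closed point of the two-dimensional ring $R$, hence is a finite-length $\kappa$-module. The task is to show $Q=0$.

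The mechanism for killing $Q$ should be a duality/vanishing argument. In case (1), $S$ strongly $F$-regular, one knows $B\Omega^{[1]}_S \cong F_{*}\sO_S/\sO_S$, and strong $F$-regularity forces this to satisfy $(S_2)$ on the surface (indeed it is maximal Cohen–Macaulay up to the relevant depth), so $H^1_{\m}(B\Omega^{[1]}_S)=0$; the long exact sequence of local cohomology then gives that $H^1_{\m}(Z\Omega^{[1]}_S)\to H^1_{\m}(\mathrm{im}(C))$ is injective and, combined with the inclusion $\mathrm{im}(C)\subseteq \Omega^{[1]}_S$ and reflexivity of $\Omega^{[1]}_S$ on the surface, pins down $\mathrm{im}(C)$ as a reflexive submodule agreeing with $\Omega^{[1]}_S$ in codimension one — but since $S$ is a two-dimensional local ring this is exactly codimension of the closed point, so one needs genuine control at the closed point, not just in codimension one. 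This is where I would bring in the explicit geometry of $F$-regular (equivalently, for surfaces, log terminal) singularities: the dual graph plus the discrepancy data, adapted to imperfect $\kappa$. Concretely, I would base-change to $\hat R$ and then to the completion of a strict henselization to make $\kappa$ "large", analyze the finitely many shapes of log terminal surface singularities over such fields (the classification of log terminal surface singularities and their resolution graphs is insensitive to perfectness of the residue field, only to $p$), and verify the vanishing of the residual obstruction graph-by-graph, exactly as Graf does — the point being that his local computations at generic points of exceptional curves never truly used perfectness, only the structure of the local ring at a codimension-one point of $Y$. For case (2), $(S,B)$ lc with $p>5$, I would use that two-dimensional lc singularities in characteristic $p>5$ have the same dual-graph classification as in characteristic zero, run the same residue computation along $E$ and along the strict transform of $B$, and invoke the hypothesis $p>5$ precisely to exclude the bad wild-ramification phenomena (e.g.\ along $E_i$ with $p \mid$ some multiplicity) that make the residue map fail to be the expected one.

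The main obstacle I anticipate is precisely the imperfect-residue-field bookkeeping at the generic point of an exceptional curve: over a perfect field one has a clean splitting $\Omega^{1}_{\sO_{Y,\eta_i}} \cong \Omega^{1}_{\kappa(\eta_i)/\kappa} \oplus (\text{vertical part})$ and the residue along $E_i$ is a map to $\kappa(\eta_i)$, but when $\kappa$ is imperfect $\Omega^{1}_{\kappa(\eta_i)/\kappa}$ is no longer one-dimensional and one must carefully separate the "horizontal" differentials (coming from $\kappa(\eta_i)/\kappa$) from the genuine pole part, and track how the Cartier operator interacts with the $p$-th power structure on the imperfect part. I expect this to require a relative version of the Cartier isomorphism over $\Spec\kappa$ and a careful choice of $p$-basis along $E_i$; once that technical dictionary is in place, the discrepancy-graph verifications should go through verbatim from Graf's arguments together with the $F$-regular (resp.\ $p>5$ lc) classification of surface singularities.
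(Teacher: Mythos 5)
There is a genuine gap: your plan defers exactly the steps where the paper has to do new work, on the (incorrect) premise that ``the classification of log terminal surface singularities and their resolution graphs is insensitive to perfectness of the residue field'' and that Graf's local computations ``go through verbatim.'' Neither holds. Passing to the strict henselization (or its completion) only makes $\kappa$ separably closed, not perfect, so the inseparability phenomena persist; and base change to a perfect closure can destroy reducedness of the exceptional curves, which is precisely the issue. Over imperfect $\kappa$ new dual graphs appear (the twisted chains and twisted star-shaped graphs of Remark \ref{rem:graph of klt}), exceptional curves of the minimal resolution can fail to be geometrically reduced (Proposition \ref{prop:geom reducedness of exc div2}, Example \ref{counterexample}), and ruling this out in the strongly $F$-regular case is a theorem in its own right (Theorem \ref{thm:SFR to Geom.red}), proved via sharp $F$-splitting, adjunction and the classification Theorem \ref{thm:appendixmain} --- which is also what supplies the characteristic bounds $p>2,3,5$ that make the relevant dlt pairs \emph{tame}. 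Likewise, Graf's residue computations do use that the exceptional curves are $\PP^1$ over the residue field with rational intersection points and that adapted (orbifold) differentials behave as in the algebraically closed case; over imperfect fields one needs the whole $\mathcal{C}$-differential formalism of Section \ref{Section:Logarithmic differential sheaf}, the residue/restriction maps for tamely dlt pairs built from quasi-\'etale cyclic covers (Theorem \ref{thm:reside and restr ex seq}), the base-field-change filtration of Lemma \ref{lem:change of fields} to get the key $H^0$-vanishing on an exceptional curve, and the tame-decomposition mechanism (Proposition \ref{prop:LET for tame dlt pair}, Lemma \ref{lem:tame decomposition implies LET}). None of this is ``verbatim Graf,'' and your proposal contains no substitute for it.

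There are also two structural problems. First, the statement asserts the logarithmic extension theorem for \emph{all} $i$-forms of the pair $(S,B)$ (and over imperfect $\kappa$ one has $\Omega^i_S\neq 0$ even for $i\geq 3$), whereas your route through the first reflexive Cartier operator only addresses one-forms with $B=0$; the paper proves the surface case directly, without the Cartier operator, and only afterwards uses it (via Lemma \ref{lem:exactness in codim two}) to feed the higher-dimensional argument. Second, even for one-forms your Cartier-operator reduction does not close: as you yourself note, the cokernel $Q$ is supported exactly at the closed point, which has codimension two in $S$, so the depth/$(S_2)$ argument that works in the codimension-$\geq 3$ setting of Proposition \ref{prop:key} gives no information here; at that point you fall back on the graph-by-graph verification whose validity over imperfect $\kappa$ is the very thing to be proved.
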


From Theorem \ref{Introthm:2-dim}, we can prove the logarithmic extension theorem in codimension two for differential forms of any degree on strongly $F$-regular variety or lc varieties in characteristic $p>5$. 
(see Corollary \ref{cor:log ext for codim two}).

If $k\cong \kappa$, Theorem \ref{Introthm:2-dim} is already proven by Graf \cite{Gra}.
However, in our situation, the residue field $\kappa$ differs from the base field $k$, and many difficulties arise.
For instance, $\Omega^1_{S}$ is no longer of rank two, and $\Omega^i_S$ is non-zero even for $i\geq 3$.
To address these issue, we generalize many existing techniques to accommodate the case of imperfect fields. 
Among these advancements, we develop the concept of Campana pairs over imperfect fields (cf.~\cite[Problem 15.13]{Kebekus-Rousseau}).

Another key ingredient in the proof of Theorem \ref{Introthm:2-dim} is the geometric reducedness of exceptional curves in the minimal resolution.  
If we assume (2) in Theorem \ref{Introthm:2-dim}, this property is essentially proved in \cite{Sato23}.  
In contrast, in case (1), establishing geometric reducedness is more subtle, particularly in low characteristics.  
In \cite{Kawakami-Sato2}, we address this issue and prove that every exceptional curve in the minimal resolution of a strongly $F$-regular surface singularity is geometrically reduced over $\kappa$ (see \cite[Theorem A]{Kawakami-Sato2}).

\section{Preliminaries}

\subsection{Notation and terminology}
Throughout this paper, we use the following notation:
\begin{itemize}
\item A \textit{variety} is an integral separated scheme of finite type over a field. A curve (resp.~surface) is a variety of dimension one (resp.~two).
\item We say that an $\mathbb{F}_p$-scheme $X$ is {\em $F$-finite} if 
$F \colon X \to X$ is a finite morphism, and  we say that an $\mathbb{F}_p$-algebra $R$ is {\em $F$-finite} if $\Spec R$ is $F$-finite. If $R$ is a Noetherian $F$-finite $\mathbb{F}_p$-algebra, it is excellent, admits a dualizing complex, and $\dim\,R<\infty$ (cf.~\cite[Remark 13.6]{Gab04}).
\item Given a proper birational morphism $f\colon Y\to X$, we denote by $\Exc(f)$ the reduced exceptional divisor.
\item Given a Noetherian normal irreducible scheme $X$, a projective birational morphism $f \colon Y \to X$ is called \emph{a log resolution of $X$} if $Y$ is regular and $\mathrm{Exc}(f)$ is a simple normal crossing divisor (snc for short).
\item For the definition of the singularities of pairs appearing in the MMP (such as \emph{klt, plt, lc}), we refer to \cite[Definition 2.8]{Kol13}. Note that we always assume that the boundary divisor is effective although \cite[Definition 2.8]{Kol13} does not impose this assumption.
\item Given a normal variety $X$ over a perfect field and a reduced divisor $D$ on $X$, we denote $j_{*}(\Omega_{U}^{i}(\log D|_U)$ by $\Omega_X^{[i]}(\log D)$, where $j\colon U\hookrightarrow X$ is the inclusion from the snc locus $U$ of $(X, D)$.
\end{itemize}

\subsection{\texorpdfstring{$F$}{F}-splitting}

\begin{defn}\label{def:F-pure}
Let $X$ be an $F$-finite scheme.
We say $X$ is \textit{$F$-split} if the Frobenius map
\[
\sO_X\to F_{*}\sO_X
\] 
splits as an $\sO_{X}$-module homomorphism.
For a point $x\in X$,
we say that $X$ is \textit{$F$-pure at $x$} if $\Spec\,\sO_{X,x}$ is $F$-split.
We say that $X$ is \textit{$F$-pure} if $X$ is $F$-pure at every point of $X$.
\end{defn}

If $X$ is affine, then $X$ is $F$-split if and only if it is $F$-pure.

\begin{defn}\label{def:str F-reg}
Let $X$ be an $F$-finite scheme and $x\in X$ a point.
We say that $X$ is \textit{strongly $F$-regular at $x$} if, for every non-zero divisor $c\in\sO_{X,x}^{\circ}$, there exists an integer $e>0$ such that the map
\[
\sO_{X,x}\xrightarrow{F^e} F^{e}_{*}\sO_{X,x}\xrightarrow{\times F_{*}^{e}c}F^{e}_{*}\sO_{X,x}
\] splits as an $\sO_{X, x}$-module homomorphism.
We say that $X$ is \textit{strongly $F$-regular} if $X$ is strongly $F$-regular at every point of $X$.
\end{defn}

\begin{rem}\label{rem:SFR locus}
Let $X$ be an $F$-finite scheme.
Then the strongly $F$-regular locus is open in $X$ (\cite[Theorem 7.1 (5)]{LS-testideal} and \cite[Remark 13.6]{Gab04}).
\end{rem}

\begin{rem}\label{rem:F-regular}
    Let $X$ be a strongly $F$-regular scheme.
    Then it is normal and pseudo-rational \cite[Theorem 3.1]{Smith(F-rational)}.
    Moreover, if $X$ is $\mathbb{Q}$-Gorenstein, then it is klt \cite[Theorem 3.3]{Hara-Watanabe}.
    We further assume that $\dim\,X=2$. In this case, $X$ is rational since it is pseudo-rational, and in particular $\mathbb{Q}$-factorial.
    Therefore, two-dimensional $F$-finite strongly $F$-regular schemes are klt.
\end{rem}

\subsection{Quasi-\texorpdfstring{$F$}{F}-injectivity}\label{sec:qFinj}

Let $X$ be an $F$-finite scheme.
For a sheaf of Witt vectors, we have the following maps: 
\begin{alignat*}{4}
&(\textrm{Frobenius}) &&F \colon W_n\sO_X \to W_n\sO_X, \quad &&F(a_0, a_1, \ldots, a_{n-1}) &&\coloneqq (a_0^p,a_1^p, \ldots, a_{n-1}^p),\\ 
&(\textrm{Verschiebung}) \quad  && V \colon W_n\sO_X \to W_{n+1}\sO_X, \quad  &&V(a_0, a_1, \ldots, a_{n-1}) &&\coloneqq (0, a_0, a_1,\ldots,  a_{n-1}),\\
&(\textrm{Restriction}) && R \colon W_{n+1}\sO_X \to W_{n}\sO_X, \quad  &&R(a_0, a_1, \ldots, a_{n}) &&\coloneqq (a_0, a_1, \ldots,  a_{n-1}),
\end{alignat*}
where $F$ and $R$ are ring homomorphisms and $V$ is an additive homomorphism.
We denote the composition $W_n\sO_X \to \sO_X$ of restrictions by $R_{n-1}$.
We then have the following short exact sequence:
\begin{equation}\label{eq:WWO}
    0 \to F_*W_{n-1}\sO_X \xrightarrow{V} W_n \sO_X \xrightarrow{R_{n-1}} \sO_X \to 0
\end{equation}
of $W_n\sO_X$-modules.
We define a $W_n\sO_X$-module $\mathcal{B}_{n,X}$ by \[
\mathcal{B}_{n,X}\coloneqq\mathrm{coker}(F\colon W_n\sO_X\to F_{*}W_n\sO_X).\]
Then $\mathcal{B}_{n,X}$ has a natural $\sO_X$-module structure via $R_{n-1}\colon W_n\sO_X\to \sO_X$ \cite[Proposition 3.6 (3)]{KTTWYY1}.
We set $\mathcal{B}_{X}\coloneqq \mathcal{B}_{1,X}$.
Then the exact sequence \eqref{eq:WWO} induces the short exact sequence
\begin{equation}\label{eq:BBB}
    0\to F_{*}\mathcal{B}_{n-1,X} \to \mathcal{B}_{n,X} \xrightarrow{R_{n-1}} \mathcal{B}_{X} \to 0
\end{equation}
of $\sO_X$-modules.

Consider the following pushout diagram 
\begin{center}
\begin{tikzcd}
0 \arrow[r] & W_n\sO_X \arrow{r}{F} \arrow[d,"R_{n-1}"'] & F_* W_n\sO_X \arrow{d} \arrow[r] & \mathcal{B}_{n,X}\arrow[d,equal]\arrow[r] & 0 \\
0 \arrow[r] & \sO_X \arrow{r}{\Phi_{X, n}} & Q_{X,n} \arrow[lu, phantom, "\usebox\pushoutdr" , very near start, yshift=0em, xshift=0.6em, color=black] \arrow[r]& \mathcal{B}_{n,X}\arrow[r] & 0
\end{tikzcd}
\end{center}
of $W_n\sO_X$-modules.
Then $Q_{X,n}$ has a natural $\sO_X$-module structure, and the lower short exact sequence can be seen as a short exact sequence of $\sO_X$-module \cite[Proposition 2.9 (2)]{KTTWYY1}.
Moreover, we obtain the following commutative diagram:
\begin{equation}\label{diagram:Q Vs O}
    \begin{tikzcd}
0 \arrow[r] & \sO_X \arrow{r}{\Phi_{X, n}}\arrow[d,equal] & Q_{X,n} \arrow{r}\arrow[d] & \mathcal{B}_{n,X}\arrow[r]\arrow[d,"R_{n-1}"] & 0\\
0 \arrow[r] & \sO_X \arrow{r}{F} & F_{*}\sO_X \arrow{r} & \mathcal{B}_{X}\arrow[r] & 0.
\end{tikzcd}
\end{equation}

\begin{defn}\label{def:quasi-F-injective}
    Let $X$ be a normal $F$-finite Noetherian scheme.
    We say that $X$ is \textit{quasi-$F$-injective at a closed point $x\in X$} if there exists a positive integer $n\in\Z_{>0}$ such that
    the map
    \[
    H^j_{\m_x}(\sO_{X})\to H^j_{\m_x}(Q_{X,n})
    \]
    is injective for all $i\geq 0$.
    We say that $X$ is \textit{quasi-$F$-injective} if it is quasi-$F$-injective at every closed point.
    We say that $X$ is \emph{quasi-$F$-injective in codimension $r$} if there is a closed subset $Z$ of $X$ with $\cod(Z) >r$ such that the open subscheme $X \setminus Z$ is quasi-$F$-injective.
\end{defn}
\begin{rem}\label{rem:quasi-F-inj}
    We take $X$ as in Definition \ref{def:quasi-F-injective}.
    Noting that $Q_{X,1} = F_*\sO_X$, we can see that if $X$ is $F$-injective, then it is quasi-$F$-injective.
    
    Recall that a quasi-$F$-purity at a closed point $x\in X$ is equivalent to the splitting of the map
    $\sO_X \to Q_{X,n}$
    for some $n>0$ at $x\in X$ (see \cite[Proposition 2.8]{KTTWYY1}).
    Thus, if $X$ is quasi-$F$-pure, then it is quasi-$F$-injective.
    In summary, we have the following implication.
\[
\text{$F$-injective}\Longrightarrow
\text{quasi-$F$-injective}\Longleftarrow
\text{quasi-$F$-pure} 
 \]
\end{rem}

\begin{lem}\label{lem:qFinj}
    Let $X$ be a normal variety over an $F$-finite field.
    Suppose that $X$ is quasi-$F$-injective.
    Then there exists $n\in\Z_{>0}$ such that
    the map
    \[
    H^j_{\m_x}(\sO_{X,x})\to H^j_{\m_x}((Q_{X,n})_x)
    \]
    is injective for every $j$ and for every (not necessarily closed) point $x \in X$, where $\m_x \subseteq \sO_{X,x}$ is the maximal ideal.
\end{lem}
\begin{proof}
    We may assume that $X$ is affine.
    Since $X$ is $F$-finite, it follows from \cite{Gab04} that $X$ admits a dualizing complex $\omega_X^{\bullet}$.
    For every point $x \in X$, let $\delta_x$ be an integer such that $\omega_{X,x}^{\bullet}[-\delta_x]$ is a normalized dualizing complex of $\sO_{X,x}$ (cf.~\stacksproj{0A7W}).
    By the local duality \stacksproj{0AAK}, the map $H^j_{\m_x}(\sO_{X,x}) \to H^j_{\m_x}((Q_{X,n})_x)$ is injective if and only if the map 
    \[
    \cExt^{-j+\delta_x}(Q_{X,n},\omega^{\bullet}_X)\to \cExt^{-j+\delta_x}(\sO_X,\omega^{\bullet}_X).
    \]
    is surjective at $x$.

    Let $U_n$ be the set of (not necessarily closed) points $x \in X$ such that for every integer $l$, the map 
    $\cExt^{l}(Q_{X,n},\omega^{\bullet}_X)\to \cExt^{l}(\sO_X,\omega^{\bullet}_X)$
    is surjective at $x$.
    Noting that $\cExt^{l}(\sO_X,\omega^{\bullet}_X) = h^l(\omega_X^{\bullet})$ is zero for all but finitely many $l$, the subset $U_n$ is open in $X$.
    Moreover, it follows from the commutative diagram 
    \[
    \begin{tikzcd}
        \sO_X \arrow[rr,"\Phi_{X,n+1}"] \arrow[rrd,"\Phi_{X,n}"'] && Q_{X,n+1} \arrow[d] \\
        && Q_{X,n}
    \end{tikzcd}
    \]
    that we have $U_n \subseteq U_{n+1}$ for all $n$.
    Since $X$ is quasi-compact, the union $U \coloneqq \bigcup_n U_n$ is equal to $U_n$ for sufficiently large $n$.
    On the other hand, by the assumption that $U$ contains all closed points of $X$, we conclude that $U=X$, as desired.
\end{proof}

\begin{rem}\label{rem:qFinj2}
    By a similar argument to that in Lemma \ref{lem:qFinj}, we can show that $X$ is quasi-$F$-injective in codimension $r$ if and only if there exists an integer $n$ such that the map
    \[
    H^j_{\m_x}(\sO_{X,x})\to H^j_{\m_x}((Q_{X,n})_x)
    \]
    is injective for every $j$ and for every point $x \in X$ with $\dim \sO_{X,x} \le r$.
\end{rem}

\subsection{Cartier operator}\label{subseq:Cartier}
We fix a perfect field $k$ of positive characteristic.
Let $X$ be a smooth scheme over $k$.
We define $B\Omega_X^i$ and $Z\Omega_X^i$ to be the boundaries and the cycles of $F_*\Omega_X^{\bullet}$ at $i$: 
\begin{align} 
\label{eq:definition-of-B1} B \Omega_X^i &\coloneqq {\rm im}(F_*\Omega_X^{i-1} \xrightarrow{F_*d}  F_*\Omega_X^{i}),\\ 
Z \Omega_X^i &\coloneqq \Ker(F_*\Omega_X^{i} \xrightarrow{F_*d}  F_*\Omega_X^{i+1}) \nonumber.
\end{align}
By definition, we obtain the short exact sequence
\begin{equation}\label{eq:ZOmegaB}
    0\to Z\Omega^i_X \to F_{*}\Omega^i_X \to B\Omega^{i+1}_X \to 0.
\end{equation}
In particular, taking $i=0$, we obtain the short exact sequence
\begin{equation}\label{eq:OOB}
0\to \sO_X \to F_{*}\sO_X \to B\Omega^{1}_X \to 0.
\end{equation}
The Cartier isomorphism \cite[Theorem 1.3.4]{fbook} gives the following short exact sequence
\begin{equation} \label{eq:Cartier-for-Z1}
0 \to B\Omega^i_X \to Z\Omega^i_X \xrightarrow{C}  \Omega^i_X \to 0. 
\end{equation}

\subsubsection{Iterations}
Inductively on $n$, we will construct a locally free $\sO_X$-submodule $Z_n\Omega_X^i$ of $F^n_* \Omega_X$ which is contained in $F_* Z_{n-1}\Omega_X^i$, and an $\sO_X$-module homomorphism $C: Z_{n}\Omega^i_X \onto Z_{n-1}\Omega^i_X$ as follows:
We first set $Z_0 \Omega_X^i\coloneqq \Omega^i_X$ and $Z_1 \Omega_X^i\coloneqq Z\Omega^i_X$.
The morphism $C\colon Z_1\Omega_X \onto Z_0\Omega_X$ is induced by the Cartier isomorphism.
When $C: Z_{n}\Omega^i_X \onto Z_{n-1}\Omega^i_X$ is constructed, we define $Z_{n+1} \Omega^i_X$ and $C: Z_{n+1}\Omega_X^i \onto Z_n\Omega_X^i$ by the following pullback diagram:
\[
\begin{tikzcd}
Z_{n+1}\Omega^i_X \arrow[rd, phantom, "\usebox\pullback" , very near start, yshift=-0.3em, xshift=-0.6em, color=black] \arrow[d,"C"', twoheadrightarrow] \arrow[r, hook] & F_*Z_{n}\Omega^i_X \arrow[d, "F_*C"', twoheadrightarrow] \\
 Z_{n}\Omega^i_X \arrow[r, hook] & F_*Z_{n-1}\Omega^i_X.
\end{tikzcd}    
\]

We denote by $C_{n}\colon Z_n\Omega^i_X \onto \Omega^i_X$ the composite map
\[
Z_n\Omega^{i}_{X} \xrightarrow{C} Z_{n-1}\Omega^{i}_{X} \xrightarrow{C}\cdots \xrightarrow{C} \Omega^i_X.
\]
By induction on $n$ starting from \eqref{eq:ZOmegaB}, we have the following exact sequences
\begin{equation}\label{diagram for Z_n}
\begin{tikzcd}
0 \arrow{r} & Z_{n+1}\Omega^i_X \arrow[rd, phantom, "\usebox\pullback" , very near start, yshift=-0.3em, xshift=-0.6em, color=black] \arrow[d,"C"', twoheadrightarrow] \arrow[r] & F_*Z_{n}\Omega^i_X \arrow[d, "F_C"', twoheadrightarrow] \arrow[rr,"F_{*}d\circ F_{*}C_n"] && B\Omega^{i+1}_X \arrow[d,equal] \arrow[r] & 0\\
0 \arrow{r} & Z_{n}\Omega^i_X \arrow{r} & F_*Z_{n-1}\Omega^i_X \arrow[rr,"F_*d\circ F_{*}C_{n-1}"] && B\Omega^{i+1}_X \arrow{r} & 0.
\end{tikzcd}    
\end{equation}
Applying the snake lemma, we obtain the short exact sequence
\begin{equation}\label{eq:Z_ntoZ_{n-1}}
    0\to F^{n-1}_{*}B\Omega^i_X \to Z_{n+1}\Omega^i_X \xrightarrow{C} Z_{n}\Omega^i_X \to 0.
\end{equation}

We next define a locally free $\sO_X$-submodule $B_n \Omega_X^i$ of $F^n_* \Omega_X^i$, and an $\sO_X$-module homomorphism $C\colon B_n \Omega_X^i \onto B_{n-1}\Omega_X^i$ for all $n\geq 0$.
We first set $B_0\Omega_X^i(\log E)\coloneqq 0$ and $B_1 \Omega_X^i\coloneqq B\Omega_X^i$. For $n\geq 2$, we inductively define an $\sO_X$-module $B_n \Omega_X$ so that it fits in the following pull-back diagram:
\begin{equation}\label{diagram:construction for B_n}
\begin{tikzcd}
0 \arrow{r} & B_{n+1}\Omega^i_X \arrow[rd, phantom, "\usebox\pullback" , very near start, yshift=-0.3em, xshift=-0.6em, color=black] \arrow[d,"C"', twoheadrightarrow] \arrow{r} & Z_{n+1}\Omega^i_X \arrow[d,"C"', twoheadrightarrow] \arrow[r,"C_{n+1}"] & \Omega^{i}_X \arrow[d,equal] \arrow[r] & 0\\
0 \arrow{r} & B_{n}\Omega^i_X \arrow{r} & Z_{n}\Omega^i_X \arrow[r,"C_{n}"] & \Omega^{i}_X \arrow{r} & 0
\end{tikzcd}    
\end{equation}
In particular, we obtain a short exact sequence
\[
0 \to B_{n}\Omega^i_X \to Z_{n}\Omega^i_X \xrightarrow{C_{n}}  \Omega^{i}_X \to 0
\]
for all $n\geq 1$.
When $n=1$, this is nothing but \eqref{eq:Cartier-for-Z1}. 
By the snake lemma and \eqref{eq:Z_ntoZ_{n-1}}, we also obtain the short exact sequence
\begin{equation}\label{eq:B_ntoB_{n-1}}
    0\to F^{n-1}_{*}B\Omega^i_X \to B_{n+1}\Omega^i_X \xrightarrow{C} B_{n}\Omega^i_X \to 0.
\end{equation}
By \cite[Lemma 6.7]{KTTWYY2}, we also have the commutative diagram
\begin{equation}\label{eq:WOWOB}
    \begin{tikzcd}
0 \arrow{r} & W_n\sO_X \arrow[r,"F"]\arrow[d,"R_{n-1}"] & F_{*}W_n\sO_X \arrow[r,"s"]\arrow[d,"F_*R_{n-1}"] & B_n\Omega^{1}_X \arrow{r}\arrow[d,"C_{n-1}"] & 0\\
0 \arrow{r} &\sO_X \arrow[r,"F"] & F_{*}\sO_X \arrow[r,"d"] & B\Omega^{1}_X  \arrow[r] & 0,
\end{tikzcd} 
\end{equation}
where horizontal sequences are exact.

\subsubsection{Reflexive Cartier operators}

In what follows, we assume that $X$ is a normal variety over a perfect field $k$ of positive characteristic. 
Let $j\colon U\hookrightarrow X$ be the inclusion of the smooth locus.
We set 
\[
B_n \Omega_X^{[i]} \coloneqq j_{*}B_{n}\Omega_U^{i}\hspace{5mm}\text{and}\hspace{5mm} Z_n\Omega_X^{[i]} \coloneqq j_{*}Z_{n}\Omega_U^{i},
\]
which are reflexive $\sO_X$-modules.
Pushing forward the short exact sequence
\[
0 \to B_{n}\Omega^i_U \to Z_{n}\Omega^i_U \xrightarrow{C_{n}}  \Omega^{i}_U \to 0, 
\]
we obtain an exact sequence
\[
0 \to B_{n}\Omega^{[i]}_X \to Z_{n}\Omega^{[i]}_X \xrightarrow{C_{n}}  \Omega^{[i]}_X.
\]

\begin{lem}\label{lem:surjectiviety of C and C_n}
    Let $X$ be a normal variety over $k$.
    Then $C\colon Z\Omega^{[i]}_X \to \Omega^{[i]}_X$ is surjective if and only if $C_n\colon Z_n\Omega^{[i]}_X \to \Omega^{[i]}_X$ is surjective for every $n>0$.
\end{lem}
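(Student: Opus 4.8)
The plan is to prove both implications by induction on $n$, using the short exact sequences \eqref{eq:Z_ntoZ_{n-1}} and \eqref{eq:B_ntoB_{n-1}} together with the compatibility square \eqref{diagram:construction for B_n} (in its reflexive form over $X$) that displays $C_n$ as the composite $Z_n\Omega_X^{[i]}\xrightarrow{C} Z_{n-1}\Omega_X^{[i]}\xrightarrow{C_{n-1}}\Omega_X^{[i]}$. The base case $n=1$ is trivial, since $C_1=C$ by construction. For the inductive step, the key observation is that the reflexive version of the first row of \eqref{diagram:construction for B_n}, obtained by applying $j_*$ to the row $0\to B_{n+1}\Omega^i_U\to Z_{n+1}\Omega^i_U\xrightarrow{C_{n+1}}\Omega^i_U\to 0$, yields the left-exact sequence $0\to B_{n+1}\Omega_X^{[i]}\to Z_{n+1}\Omega_X^{[i]}\xrightarrow{C_{n+1}}\Omega_X^{[i]}$, and that $C_{n+1}$ factors as $C_n\circ C$ where $C\colon Z_{n+1}\Omega_X^{[i]}\to Z_n\Omega_X^{[i]}$ is the reflexive pushforward of the surjection $Z_{n+1}\Omega^i_U\onto Z_n\Omega^i_U$.

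First I would settle the ``only if'' direction, which is the easier one: if $C\colon Z\Omega_X^{[i]}\to\Omega_X^{[i]}$ is surjective, I claim each $C\colon Z_{n+1}\Omega_X^{[i]}\to Z_n\Omega_X^{[i]}$ is surjective, whence so is the composite $C_n$. For this I push forward \eqref{eq:Z_ntoZ_{n-1}} along $j$ to get $0\to F^{n-1}_*B\Omega_X^{[i]}\to Z_{n+1}\Omega_X^{[i]}\xrightarrow{C} Z_n\Omega_X^{[i]}\to R^1j_*(F^{n-1}_*B\Omega^i_U)$, so surjectivity of $C$ on $Z_{n+1}$ would follow once I know surjectivity of $C$ on $Z\Omega_X^{[i]}$ forces the relevant connecting map to vanish. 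The cleanest route here is to note that, because $j$ is the inclusion of the smooth (hence big open) locus of a normal variety and $F^{n-1}_*B\Omega^i_U$ is locally free on $U$, one reduces the vanishing of obstructions to a statement about the single exact sequence $0\to B\Omega_X^{[i]}\to Z\Omega_X^{[i]}\xrightarrow{C}\Omega_X^{[i]}$ via the iterated pullback structure; concretely, surjectivity of $C$ at level $1$ together with the pullback square defining $Z_{n+1}$ over $U$ and the fact that pushforward preserves fiber products of quasi-coherent sheaves along $j$ gives that $Z_{n+1}\Omega_X^{[i]}$ is the fiber product $Z_n\Omega_X^{[i]}\times_{F_*Z_{n-1}\Omega_X^{[i]}}F_*Z_n\Omega_X^{[i]}$, from which surjectivity of $C\colon Z_{n+1}\Omega_X^{[i]}\to Z_n\Omega_X^{[i]}$ is inherited from surjectivity of $F_*C\colon F_*Z_n\Omega_X^{[i]}\to F_*Z_{n-1}\Omega_X^{[i]}$, i.e. from the inductive hypothesis.

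For the ``if'' direction, suppose $C_n\colon Z_n\Omega_X^{[i]}\to\Omega_X^{[i]}$ is surjective for some $n$. Since $C_n=C_{n-1}\circ\cdots\circ C$ and each arrow is $C\colon Z_{k+1}\Omega_X^{[i]}\to Z_k\Omega_X^{[i]}$ composed down to $C\colon Z_1\Omega_X^{[i]}\to Z_0\Omega_X^{[i]}=\Omega_X^{[i]}$, surjectivity of the composite forces surjectivity of the last arrow $C\colon Z\Omega_X^{[i]}\to\Omega_X^{[i]}$, because the image of the composite is contained in the image of the final map. This is essentially formal. One then also wants the converse direction of this same reduction — that surjectivity of $C_n$ for one $n$ gives it for all $n$ — but for the statement as phrased only the equivalence with the $n=1$ case is required, and that is exactly what the two paragraphs above supply.

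The main obstacle I anticipate is the interchange of $j_*$ with the defining pullback (fiber product) squares for $Z_{n+1}\Omega^i_U$ and $B_{n+1}\Omega^i_U$: one must check that $j_*$ of a pullback of quasi-coherent sheaves on $U$ is the pullback of the pushforwards, which holds because $j_*$ is left exact and fiber products of sheaves are computed as kernels of a difference map, so it commutes with left-exact functors — this is routine but should be stated carefully, and it is the only place where normality of $X$ (ensuring $j$ is the inclusion of a big open set, so that the reflexive hulls behave well and $Z_n\Omega_X^{[i]}, B_n\Omega_X^{[i]}$ are reflexive) genuinely enters. Once this commutation is in hand, the equivalence follows by feeding it into \eqref{eq:Z_ntoZ_{n-1}} and chasing surjectivity up and down the tower by induction; I would present the argument as a short induction rather than writing out all the diagrams.
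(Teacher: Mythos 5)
Your proposal is correct and takes essentially the same route as the paper: the easy direction follows from the factorization $C_n=C\circ\cdots\circ C$ (so the image of $C_n$ lies in the image of $C\colon Z\Omega^{[i]}_X\to\Omega^{[i]}_X$), and the hard direction from the observation that the left-exact functor $j_*$ preserves the defining pullback squares, so that $C\colon Z_{n+1}\Omega^{[i]}_X\to Z_n\Omega^{[i]}_X$ is the base change of the surjection $F_*C$ and surjectivity propagates by induction. The initial detour through $R^1j_*$ applied to \eqref{eq:Z_ntoZ_{n-1}} is unnecessary, but you discard it in favor of the pullback argument, which is exactly the paper's proof, so there is no gap.
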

\begin{proof}
    Since we have the decomposition
    \[
    C_{n} \colon Z_n\Omega^{[i]}_{X} \xrightarrow{C} Z_{n-1}\Omega^{[i]}_{X} \xrightarrow{C} \cdots \xrightarrow{C} \Omega^{[i]}_X,
    \]
    the `if' part is obvious.
    Suppose that $C\colon Z\Omega^{[i]}_X \to \Omega^{[i]}_X$ is surjective.
    By the pullback diagram
    \begin{equation*}
\begin{tikzcd}
 Z_{n+1}\Omega^{[i]}_X \arrow[rd, phantom, "\usebox\pullback" , very near start, yshift=-0.3em, xshift=-0.6em, color=black] \arrow[d,"C"'] \arrow[r] & F_*Z_{n}\Omega^{[i]}_X \arrow{d}[swap]{F_*C} \\
 Z_{n}\Omega^{[i]}_X \arrow{r} & F_*Z_{n-1}\Omega^{[i]}_X ,
\end{tikzcd}    
\end{equation*}
we can inductively prove that $C\colon Z_n\Omega^{[i]}_X \to Z_{n-1}\Omega^{[i]}_X$ is surjective for every $n>0$.
Here, we note that left exact functors preserve the pullback diagrams in the category of $\sO_X$-modules.
Thus, the other direction holds.
\end{proof}

\begin{lem}\label{lem:B vs BOmega}
    Let $X$ be a normal variety over $k$.
    Then we have the commutative diagram:
    \begin{equation}\label{eq:Bvs BOmega}
        \begin{tikzcd}
        \mathcal{B}_{n,X}\arrow[r,hookrightarrow]\arrow[d,"R_{n-1}"'] & B_{n}\Omega^{[1]}_X\arrow[d,"C_{n-1}"]\\
        \mathcal{B}_{X}\arrow[r,hookrightarrow] & B\Omega^{[1]}_X\\
    \end{tikzcd}
    \end{equation}
    If $X$ is $F$-pure in addition, then the horizontal maps are isomorphisms.
\end{lem}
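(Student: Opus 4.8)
The plan is to produce both horizontal arrows by passing to the smooth locus $j\colon U\hookrightarrow X$ and pushing forward, and then to verify injectivity unconditionally and surjectivity under the $F$-purity hypothesis by a depth argument.

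First I would restrict the diagram \eqref{eq:WOWOB} to $U$: exactness of its top row identifies $\mathcal{B}_{n,U}=\mathrm{coker}(F\colon W_n\sO_U\to F_*W_n\sO_U)$ with $B_n\Omega^1_U$ (via $s$), compatibly with the identification $\mathcal{B}_U\cong B\Omega^1_U$ coming from \eqref{eq:OOB} and with the vertical maps $R_{n-1}$ and $C_{n-1}$. Since restriction to an open subset is exact, $\mathcal{B}_{n,X}|_U\cong\mathcal{B}_{n,U}\cong B_n\Omega^1_U$, and likewise $\mathcal{B}_X|_U\cong B\Omega^1_U$. Applying $j_*$ then yields natural $\sO_X$-homomorphisms
\[
\mathcal{B}_{n,X}\longrightarrow j_*\!\left(\mathcal{B}_{n,X}|_U\right)=B_n\Omega^{[1]}_X,\qquad \mathcal{B}_X\longrightarrow j_*\!\left(\mathcal{B}_X|_U\right)=B\Omega^{[1]}_X,
\]
and the square \eqref{eq:Bvs BOmega} commutes because it does so over $U$ and, by the adjunction $j^{*}\dashv j_{*}$, a homomorphism from $\mathcal{B}_{n,X}$ into $B\Omega^{[1]}_X=j_*B\Omega^1_U$ is determined by its restriction to $U$. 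To see that the two maps are injective, note that the kernel of each is supported on $X\setminus U$, which has codimension $\ge 2$; hence it suffices to show that $\mathcal{B}_{n,X}$ and $\mathcal{B}_X$ are torsion-free $\sO_X$-modules.

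For this I would first treat $\mathcal{B}_X=F_*\sO_X/\sO_X$: the claim is local, and if $R$ is a normal local domain with $a,x\in R$, $a\ne 0$, and $a^p x\in R^p$, then writing $a^p x=b^p$ gives $(b/a)^p=x\in R$, so $b/a$ is integral over $R$, hence $b/a\in R$ and $x\in R^p$; thus $\mathcal{B}_X$ is torsion-free. For general $n$ I would induct using the short exact sequence \eqref{eq:BBB}: $F_*$ preserves torsion-freeness, and an extension of torsion-free sheaves is torsion-free, so $\mathcal{B}_{n,X}$ is torsion-free. This completes the construction of the commutative square with injective horizontal arrows, needing only normality of $X$.

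Finally, assuming $X$ is $F$-pure, since both the assertion and the hypothesis are local I may assume that $\sO_X\to F_*\sO_X$ splits, so that $\mathcal{B}_X$ is a direct summand of $F_*\sO_X$. As $X$ is normal and Frobenius is finite and a homeomorphism, $F_*\sO_X$ satisfies $S_2$ over $\sO_X$; since the depth of a direct sum equals the minimum of the depths of the summands, $\mathcal{B}_X$ satisfies $S_2$, and being also torsion-free it is reflexive, so $\mathcal{B}_X\to B\Omega^{[1]}_X$ is an isomorphism. Substituting this into \eqref{eq:BBB} and inducting on $n$ — using that $F_*$ preserves $S_2$ and that an extension of $S_2$ sheaves is $S_2$ — I would conclude that $\mathcal{B}_{n,X}$ is reflexive, whence $\mathcal{B}_{n,X}\to B_n\Omega^{[1]}_X$ is an isomorphism as well. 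The main obstacle is precisely the torsion-freeness of $\mathcal{B}_X$ (equivalently, the saturatedness of $\sO_X$ inside $F_*\sO_X$), where normality enters, together with the promotion to $S_2$ afforded by an $F$-splitting; the remaining steps are routine bookkeeping with \eqref{eq:WOWOB} and \eqref{eq:BBB}.
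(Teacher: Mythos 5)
Your proof is correct and takes essentially the same route as the paper's: commutativity via restricting \eqref{eq:WOWOB} to the smooth locus and pushing forward, injectivity from torsion-freeness of $\mathcal{B}_{n,X}$ (proved inductively via \eqref{eq:BBB}, with normality entering exactly where you use it), and the $F$-pure case from the local splitting making $\mathcal{B}_X$, and then inductively $\mathcal{B}_{n,X}$, reflexive. The only difference is that you spell out details the paper leaves implicit (the integrality argument for torsion-freeness and the depth/$S_2$ bookkeeping for reflexivity of direct summands and extensions), which is harmless.
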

\begin{proof}
    Since $\mathcal{B}_X\cong F_{*}\sO_X/\sO_X$ is torsion-free, we can inductively deduce that $\mathcal{B}_{n,X}$ is torsion-free by \eqref{eq:BBB}.
    Thus, the reflexivization
    \[
    \mathcal{B}_{n,X}\hookrightarrow B_n\Omega^{1}_X
    \]
    is injective. Now, the first assertion follows from \eqref{eq:WOWOB}.
    
    To show the second assertion, suppose that $X$ is $F$-pure in addition. 
    We show that $\mathcal{B}_{n,X}$ is reflexive.
    Since $X$ is $F$-pure, the short exact sequence
    \[
    0\to \sO_X \to F_{*}\sO_X \to \mathcal{B}_{X} \to 0
    \]
    split locally, and thus, $\mathcal{B}_{X}$ is reflexive.
    By \eqref{eq:BBB} again, we conclude that $\mathcal{B}_n$ is reflexive inductively.
\end{proof}

\section{Extending one-forms}

In this section, we deduce Theorems \ref{Introthm:F-regular}, \ref{Introthm:klt}, and \ref{Introthm:LZ-conj} from Theorem \ref{Introthm:2-dim}.

\begin{lem}\label{lem:exactness in codim two}
    Let $V$ be a normal variety over a perfect field $k$ of positive characteristic.
    Suppose that
    \begin{enumerate}[label=\textup{(\arabic*)}]
        \item there exists a log resolution $f: W \to V$,
        \item $R^if_* \sO_W=0$ for $i\in\{1,2\}$, and
        \item $f_*\Omega_W^1(\log \Exc(f)) = \Omega_V^{[1]}$.
\end{enumerate}
    Then $C_n\colon Z_n \Omega_V^{[1]} \to \Omega_V^{[1]}$ is surjective for every $n \geq 1$.
    \end{lem}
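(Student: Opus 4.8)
The plan is to transfer the surjectivity of the logarithmic Cartier operator on the log resolution $W$ down to $V$ along $f$. First I would reduce to $n=1$: by Lemma~\ref{lem:surjectiviety of C and C_n} it suffices to prove that $C\colon Z\Omega_V^{[1]}\to\Omega_V^{[1]}$ is surjective. Put $E\coloneqq\Exc(f)$. Since $W$ is smooth and $E$ is snc, the logarithmic Cartier isomorphism gives a short exact sequence
\[
0\longrightarrow B\Omega^1_W\longrightarrow Z\Omega^1_W(\log E)\xrightarrow{\ C\ }\Omega^1_W(\log E)\longrightarrow 0
\]
on $W$, whose kernel carries no logarithmic poles (as $d\colon\sO_W\to\Omega^1_W(\log E)$ factors through $\Omega^1_W$), so it is exactly $B\Omega^1_W\cong F_*\sO_W/\sO_W$, the cokernel appearing in \eqref{eq:OOB}.

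Next I would establish the vanishing $R^1f_*B\Omega^1_W=0$. Since the (absolute) Frobenius is finite and satisfies $f\circ F=F\circ f$, for every coherent sheaf $\mathcal G$ on $W$ one has $R^if_*(F_*\mathcal G)\cong F_*R^if_*\mathcal G$; applying this to $\mathcal G=\sO_W$ and using hypothesis~(2) yields $R^1f_*(F_*\sO_W)=0$. Feeding this and hypothesis~(2) into the long exact sequence of higher direct images attached to \eqref{eq:OOB},
\[
R^1f_*(F_*\sO_W)\longrightarrow R^1f_*B\Omega^1_W\longrightarrow R^2f_*\sO_W,
\]
forces $R^1f_*B\Omega^1_W=0$. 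Applying $f_*$ to the displayed log Cartier sequence then shows that $C\colon f_*Z\Omega^1_W(\log E)\to f_*\Omega^1_W(\log E)$ is surjective, and by hypothesis~(3) the target is $\Omega_V^{[1]}$.

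Finally I would feed this into the reflexive Cartier operator on $V$. Let $U'\subseteq V$ be the open locus over which $f$ is an isomorphism; since $f(\Exc(f))$ has codimension $\ge 2$ in $V$, so does $V\setminus U'$, hence $U'$ is a big open subset contained in the smooth locus $U$ of $V$, and $E\cap f^{-1}(U')=\emptyset$. Consequently, over $U'$ the logarithmic Cartier operator on $W$ restricts to the ordinary Cartier operator, and $(f_*Z\Omega^1_W(\log E))|_{U'}\cong Z\Omega^1_{U'}$, $(f_*\Omega^1_W(\log E))|_{U'}\cong\Omega^1_{U'}$; moreover $Z\Omega^{[1]}_V=j'_*Z\Omega^1_{U'}$ and $\Omega^{[1]}_V=j'_*\Omega^1_{U'}$ for the inclusion $j'\colon U'\hookrightarrow V$, because $Z\Omega^1_U$ and $\Omega^1_U$ are locally free and $U\setminus U'$ has codimension $\ge 2$. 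Composing with the canonical morphisms $\mathcal F\to j'_*(\mathcal F|_{U'})$ produces a commutative square
\[
\begin{tikzcd}
f_*Z\Omega^1_W(\log E) \arrow[r, "C"] \arrow[d] & f_*\Omega^1_W(\log E) \arrow[d, equal] \\
Z\Omega^{[1]}_V \arrow[r, "C"] & \Omega^{[1]}_V,
\end{tikzcd}
\]
commutativity being checked over the dense open $U'$ and propagated using that $\Omega^{[1]}_V$ is torsion-free. As the top arrow is surjective and the right arrow is an isomorphism, the bottom arrow $C\colon Z\Omega^{[1]}_V\to\Omega^{[1]}_V$ is surjective, and Lemma~\ref{lem:surjectiviety of C and C_n} then promotes this to the surjectivity of $C_n$ for every $n\ge 1$.

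The only genuinely delicate point is the last paragraph: identifying the $f$-pushforwards $f_*Z\Omega^1_W(\log E)$ and $f_*\Omega^1_W(\log E)$ with the reflexive sheaves $Z\Omega^{[1]}_V$ and $\Omega^{[1]}_V$ on $V$ in a way compatible with both Cartier operators, i.e.\ verifying that the comparison square commutes. Everything else (the logarithmic Cartier sequence and the vanishing $R^1f_*B\Omega^1_W=0$) is formal.
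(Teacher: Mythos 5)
Your argument is correct, but it takes a different route from the paper: the paper's own proof of this lemma is essentially a citation, deducing the surjectivity of $C_1$ directly from \cite[Proposition 4.4]{Kaw4} and then invoking Lemma~\ref{lem:surjectiviety of C and C_n}, whereas you give a self-contained proof of that key input. Your chain of reasoning — the logarithmic Cartier isomorphism on $W$ giving $0 \to B\Omega^1_W \to Z\Omega^1_W(\log E) \xrightarrow{C} \Omega^1_W(\log E) \to 0$ (with the kernel indeed the non-logarithmic $B\Omega^1_W$, since $d\mathcal{O}_W$ has no log poles), the identification $R^1f_*(F_*\sO_W) \cong F_*R^1f_*\sO_W$ from affineness of Frobenius, the vanishing $R^1f_*B\Omega^1_W=0$ from hypothesis (2), and the descent to $V$ via the big open $U'$ over which $f$ is an isomorphism (where normality of $V$ guarantees $\codim_V(V\setminus U')\geq 2$, so $Z\Omega^{[1]}_V=j'_*Z\Omega^1_{U'}$ and $\Omega^{[1]}_V=j'_*\Omega^1_{U'}$, and commutativity of the comparison square can be checked on $U'$ by adjunction) — is sound, and hypothesis (3) is used exactly where it should be, to identify $f_*\Omega^1_W(\log E)$ with $\Omega^{[1]}_V$ so that surjectivity of the pushed-forward log Cartier operator forces surjectivity of the reflexive one. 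The final passage from $C$ to $C_n$ via Lemma~\ref{lem:surjectiviety of C and C_n} coincides with the paper. What the paper's version buys is brevity, deferring the cohomological argument to the earlier reference; what yours buys is transparency about why precisely $R^1f_*\sO_W=R^2f_*\sO_W=0$ and the extension property for one-forms are the needed hypotheses, at the cost of reproving material the authors treat as known.
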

\begin{proof}
    The surjectivity of $C_1$ follows from \cite[Proposition 4.4]{Kaw4}, and we obtain the surjectivity of $C_n$ for all $n\geq 1$ by Lemma \ref{lem:surjectiviety of C and C_n}.
\end{proof}

Thanks to Theorem \ref{Introthm:2-dim} and Lemma \ref{lem:exactness in codim two}, 
we prove the following theorem, which includes both \cite[Theorem D]{Kaw4} and \cite[Theorem 9.5]{KTTWYY2} as special cases.

\begin{thm}\label{thm:key}
    Let $X$ be a normal variety over a perfect field $k$ of positive characteristic.
    Suppose that the followings hold:
    \begin{enumerate}[label=\textup{(\arabic*)}]
        \item $X$ is strongly $F$-regular in codimension $2$.
        \item $X$ is quasi-$F$-injective in codimension $3$, and
        \item $X$ satisfies Serre's condition $(S_4)$, 
    \end{enumerate}
    Then $C\colon Z\Omega^{[1]}_X \to \Omega^{[1]}_X$ is surjective.
    In particular, $X$ satisfies the logarithmic extension theorem for one-forms.
\end{thm}
\begin{rem}
    Note that we can apply this theorem even when $\dim X\leq 3$. In this case, the assumption (3) is equivalent to saying that $X$ is Cohen--Macaulay.
\end{rem}
\begin{proof}
    Since the assertion is local on $X$, we may assume that $X$ is affine.
    Since there exists a log resolution of $\Spec \sO_{X,x}$ for every codimension two point $x \in X$, we can take an open subscheme $U \subseteq X$ such that $\codim_X(X \setminus U) \geq 3$ and a log resolution $f\colon W \to U$.
    By assumption (2), after shrinking $U$, we may also assume that $U$ is strongly $F$-regular (see Remark \ref{rem:SFR locus}).
    Combining the assumption (3) with Theorem \ref{Introthm:2-dim} and Remark \ref{rem:F-regular}, the conditions (2) and (3) of Lemma \ref{lem:exactness in codim two} are satisfied around every codimension two point of $U$.
    Therefore, after shrinking $U$ again, we may assume that $C_n \colon Z_n\Omega_U^{[1]} \to \Omega_U^{[1]}$ is surjective for every $n \ge 1$.
    
We set $Z\coloneqq X \setminus U$ and consider the following commutative diagram:
\[
\begin{tikzcd}
H^3_{Z}(\sO_X)\arrow[r, "\beta_n"]\arrow[d,"\alpha"] & H^3_{Z}(Q_{X,n})\arrow[d]\\
\bigoplus_{i=1}^l H^3_{\m_i}(\sO_{X,x_i})\arrow[r,"\bigoplus_{i} \beta_{n,i}"]& \bigoplus_{i=1}^l H^3_{\m_i}((Q_{X,n})_{x_i}),
\end{tikzcd}
\]
where $x_1, \dots, x_l$ are the generic points of $Z$ of height $3$ and $\m_i$ is the maximal ideal of $\sO_{X,x_i}$.
By Lemma \ref{lem:unmixed cohomogy} below, the localization map $\alpha$ is injective.
Since $X$ is $F$-injective in codimension $3$, the morphism $\beta_{n,i}$ is injective for every $i$ and sufficiently large $n$ (Remark \ref{rem:qFinj2}).
Therefore, we conclude that $\beta_n$ is also injective.

Then by \eqref{diagram:Q Vs O}, we have the following commutative diagram
\[
\begin{tikzcd}
H^2_{Z}(Q_{X,n})\arrow[r, twoheadrightarrow]\arrow[d] & H^2_{Z}(\mathcal{B}_{n,X})\arrow[d,"R_{n-1}"]\\
  \mathllap{0\,=\,\, } H^2_{Z}(F_{*}\sO_X)\arrow[r]& H^2_{Z}(\mathcal{B}_{X}),
\end{tikzcd}
\]
where the top horizontal arrow is surjective. Since $\codim_X(Z)\geq 3$ and $F_{*}\sO_X$ satisfies $(S_3)$, we have $H^2_{Z}(F_{*}\sO_X)=0$ \cite[Proposition 1.2.10 (a) and (e)]{BH93}.
Thus, $R^{n-1}\colon H^2_{Z}(\mathcal{B}_{n,X})\to H^2_{Z}(\mathcal{B}_X)$
is a zero map.

Since $X$ is affine, we have $H^1(U, \mathcal{B}_{m,U})\cong H^2_{Z}(\mathcal{B}_{m,X})$ for all $m>0$.
Moreover, by Lemma \ref{lem:B vs BOmega}, we have the natural isomorphisms
$\mathcal{B}_{m,U} \cong B_m\Omega_U^{[1]}$ for all $m>0$.
Then we obtain the following commutative diagram
\[
\begin{tikzcd}
H^2_{Z}(\mathcal{B}_{n,X})\arrow[r,"\cong"]\arrow[d,"R_{n-1}=0"'] & H^1(U,\mathcal{B}_{n,U})  \arrow[r,"\cong"] \arrow[d,"R_{n-1}"]& H^1(U,B_n\Omega^{[1]}_U)\arrow[d,"C_{n-1}"]  \\
H^2_{Z}(\mathcal{B}_{X})\arrow[r,"\cong"] & H^1(U,\mathcal{B}_{U}) \arrow[r,"\cong"] & H^1(U,B\Omega^{[1]}_U),
\end{tikzcd}
\]
where the commutativity follows from \eqref{eq:Bvs BOmega}.
Therefore, $C_{n-1}\colon H^1(U,B_n\Omega^{[1]}_U)\to H^1(U,B\Omega^{[1]}_U)$ is a zero map.

Since the reflexive Cartier operators are surjective on $U$, the diagram \eqref{diagram:construction for B_n} induces the following exact sequences:
\[
\begin{tikzcd}
0\arrow[r] & B_{n}\Omega_U^{[1]}\arrow[r]\arrow[d,"C_{n-1}"'] & Z_n\Omega_U^{[1]}\arrow[r,"C_n"]\arrow[d, "C_{n-1}"] & \Omega_U^{[1]}\arrow[d, equal] \arrow[r]& 0 \\
0\arrow[r] & B\Omega_U^{[1]}\arrow[r] & Z\Omega_U^{[1]}\arrow[r,"C"] & \Omega_U^{[1]}  \arrow[r]& 0.
\end{tikzcd}
\]
Pushing forward by $j\colon U\hookrightarrow X$, we obtain the following commutative diagram
\[
\begin{tikzcd}[column sep=3cm]
Z_{n}\Omega_X^{[1]}\arrow[r, "C_n"]\arrow[d, "C_{n-1}"'] & \Omega_X^{[1]}\ar[r]\arrow[d, equal] & H^1(U, B_{n}\Omega_U^{[1]})\arrow[d, "C_{n-1}=\,0"]\\
 Z\Omega_X^{[1]}\arrow[r, "C"] & \Omega_X^{[1]}\arrow[r] & H^1(U,B\Omega_U^{[1]}).
\end{tikzcd}
\]
Now, by diagram chasing, we obtain the surjectivity of $C\colon Z\Omega_X^{[1]}\to \Omega_X^{[1]}$, as desired.
The last assertion follows from Theorem \ref{thm:Kaw4}.
\end{proof}

\begin{lem}\label{lem:unmixed cohomogy}
    Let $r \ge 0$ be an integer. Let $A$ be a Noetherian integral domain which satisfies Serre's condition $(S_{r+1})$ and let $I \subseteq A$ be an ideal with $\mathrm{ht}(I) \ge r$.
    Suppose that $\p_1, \p_2 \dots, \p_l$ are the minimal prime ideals containing $I$ with height $r$.
    Then the following assertions hold:
    \begin{enumerate}[label=\textup{(\arabic*)}]
    \item The union $\bigcup_{i=1}^l \p_i$ is the set elements of $A$ which are zero-divisors on $H^r_I(A)$, in other words, the set of associated prime ideals of $H^r_I(A)$ is $\{\p_1,\dots,\p_l\}$. 
    \item The moprhism 
    \[
    \bigoplus_{i=1}^l \phi_i \colon H^r_I(A) \to  \bigoplus_{i=1}^l H^r_{\p_i A_{\p_i}}(A_{\p_i}) 
    \]
    is injective, where $\phi_i$ is the localization map
    \[
    H^r_I(A) \to H^r_I(A)_{\p_i} \cong H^r_{IA_{\p_i}}(A_{\p_i}) \cong H^r_{\p_iA_{\p_i}}(A_{\p_i}).
    \]
    \end{enumerate}
    \end{lem}

    \begin{proof}
        The assertion in (2) follows immediately from (1) (cf.~\stacksproj{0311}).
        For (1), we first note that 
        \[
        H^r_I(A)_\p \cong H^r_{IA_\p}(A_\p)
        \]
        for every prime ideal $\p \subseteq A$.
        In particular, it follows from the local duality that $H^r_I(A)_{\p_i}$ is non-zero for every $i$.
        This shows that $\p_i$ is a minimal prime ideal of $\Supp(H^r_I(A))$.
        It then follows from \stacksproj{05BV} that each $\p_i$ is an associated prime ideal of $H^r_I(A)$.
        By \stacksproj{00LD}, any element of $\bigcup_{i=1}^l \p_i$ is a zero-divisor of $H^r_I(A)$.

        For the converse inclusion, we take an element $s \in A \setminus \bigcup_{i=1}^l \p_i$ and prove that $s$ is a nonzero-divisor of $H^r_I(A)$.
        Since we have the exact sequence
        \[
        H^{r-1}_I(A/(s)) \to H^r_I(A) \xrightarrow{s} H^r_I(A),
        \]
        it suffices to show the vanishing $H^{r-1}_I(A/(s)) =0$.
        
        Take a prime ideal $\p \subseteq A$ containing $I$.
        If $\p$ does not contain $s$, then we have 
        \[
        \mathrm{depth} (A/(s))_{\p} = \mathrm{depth} (0) = \infty.
        \]
        If $\p$ contains $s$, then we have $\p \neq \p_i$ for every $i$.
        Therefore, one has $\mathrm{ht}(\p) \ge r+1$.
        Combining this with $(S_{r+1})$-condition, we have 
        \[
        \mathrm{depth} (A/(s))_{\p} = \mathrm{depth}(A_{\p}) -1 \ge r.
        \]
        It then follows from \cite[Proposition 1.2.10 (a)]{BH93} that one has 
        \[
        \mathrm{grade} (I, A/(s)) = \min_{\p \supseteq I} \mathrm{depth} (A/(s))_{\p} \ge r,
        \]
        which implies the desired vanishing $H^{r-1}_I(A/(s))=0$ \cite[Proposition 1.2.10 (e)]{BH93}.
    \end{proof}

\begin{thm}\label{thm:main}
    Let $X$ be a normal variety over a perfect field of characteristic $p>0$.
    Suppose that one of the following holds:
    \begin{enumerate}[label=\textup{(\arabic*)}]
        \item $X$ is strongly $F$-regular.
        \item $X$ is klt, $\dim\,X=3$, and $p>41$.
    \end{enumerate}
    Then the reflexive Cartier operator 
    \[
     C \colon Z\Omega_X^{[1]}\to \Omega_X^{[1]}
     \]
     is surjective.
\end{thm}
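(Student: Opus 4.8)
The plan is to verify that each of the two hypotheses on $X$ implies the three conditions of Proposition~\ref{prop:key}, namely that $X$ satisfies $(S_3)$, is quasi-$F$-pure, and is strongly $F$-regular in codimension two; the conclusion then follows immediately from that proposition. Thus the real content is a bookkeeping argument, splitting into the two cases.

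In case (1), $X$ is strongly $F$-regular. Then $X$ is Cohen--Macaulay (strongly $F$-regular implies $F$-rational, hence CM), so in particular it satisfies $(S_3)$. It is $F$-pure, hence quasi-$F$-pure. And it is strongly $F$-regular everywhere, so \emph{a fortiori} in codimension two. Hence all three hypotheses of Proposition~\ref{prop:key} hold, and the surjectivity of $C\colon Z\Omega_X^{[1]}\to\Omega_X^{[1]}$ follows. (This is the quick half.)

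In case (2), $X$ is a klt threefold with $p>41$. For $(S_3)$: a klt (indeed any $\mathbb{Q}$-Gorenstein klt) singularity is CM in dimension at most $3$ when $p$ is large — in fact klt threefolds are CM for $p>5$ — so $(S_3)$ holds. For quasi-$F$-purity: this is precisely where the bound $p>41$ enters; by the results cited in the introduction (the quasi-$F$-purity of three-dimensional klt singularities in characteristic $p>41$, via \cite{KTTWYY2}), $X$ is quasi-$F$-pure. For strong $F$-regularity in codimension two: a codimension-two point of a klt threefold is (the generic point of a curve on) a two-dimensional klt singularity, and two-dimensional klt singularities in characteristic $p>5$ are strongly $F$-regular (as recalled in the introduction); since $p>41>5$, this holds. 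Again Proposition~\ref{prop:key} applies and gives the surjectivity of the reflexive Cartier operator.

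The main obstacle is locating and citing the correct input for quasi-$F$-purity of klt threefolds in characteristic $p>41$; everything else ($(S_3)$ from Cohen--Macaulayness, and strong $F$-regularity in codimension two from the two-dimensional klt theory) is standard and has been recalled in the introduction. One should also be slightly careful that ``strongly $F$-regular in codimension two'' is interpreted as: the local rings at codimension-two points are strongly $F$-regular, which for a klt threefold reduces to the two-dimensional klt case after passing to $\Spec\mathcal{O}_{X,P}$ — a base change that may introduce an imperfect residue field, but strong $F$-regularity of two-dimensional klt singularities in $p>5$ does not require the residue field to be perfect. With these points in place, the theorem is a direct corollary of Proposition~\ref{prop:key}.
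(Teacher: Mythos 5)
Your plan is the same as the paper's: verify hypotheses (1)--(3) of Proposition~\ref{prop:key} in each case and conclude. Case (1) matches the paper verbatim in substance (the paper cites \cite[Proposition 3.9]{Takagi-Watanabe} for Cohen--Macaulayness, while you go through $F$-rationality; both are fine), and in case (2) your inputs for $(S_3)$ (Cohen--Macaulayness of klt threefolds in $p>5$, \cite[Corollary 1.3]{ABL}) and for strong $F$-regularity in codimension two (two-dimensional klt singularities with possibly imperfect residue field are strongly $F$-regular for $p>5$, i.e.\ Theorem~\ref{thm:appendixmain} or \cite[Theorem 1.2]{Sato-Takagi(generalhyperplane)}) are exactly those of the paper, including your correct remark that the imperfect residue field at a codimension-two point is harmless.

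The one point where your argument is incomplete is the quasi-$F$-purity input in case (2), which you yourself flag as the ``main obstacle.'' The result you invoke does not exist in the literature in the form you state it: \cite[Theorem A]{KTTWYY2} is for \emph{$\Q$-factorial} klt threefolds, and the Theorem E quoted in the introduction concerns \emph{terminal} singularities. The paper bridges this by first passing to a small $\Q$-factorialization of $X$ (\cite[Theorem 2.14]{GNT16}) and applying \cite[Theorem A]{KTTWYY2} there; since the conditions of Proposition~\ref{prop:key} are then checked after this replacement, one also needs (implicitly) that this reduction is legitimate, e.g.\ that quasi-$F$-splitting descends along the small contraction $g$ because $g_*W_n\sO_{X'}=W_n\sO_X$. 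Alternatively, the paper notes that conditions (1) and (2) can be obtained directly from \cite[Theorems B and D]{KTTWYY3}, which would let you avoid the $\Q$-factorialization altogether. With either of these fixes supplied, your proof coincides with the paper's.
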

\begin{proof}
    We verify the conditions (1)--(3) in Theorem \ref{thm:key}.
    We first assume that $X$ is strongly $F$-regular. Then it is Cohen-Macaulay \cite[Proposition 3.9]{Takagi-Watanabe}, and (3) is satisfied.
    Conditions (1) and (2) can be immediately checked by definition.

    Next, we assume that $X$ is klt, $\dim\,X=3$, and $p>41$.
    By taking a $\Q$-factorialization of $X$ \cite[Theorem 2.14]{GNT16}, we may assume that $X$ is $\Q$-factorial.
    Then $X$ is quasi-$F$-split by \cite[Theorem A]{KTTWYY2}, and the condition (2) holds.
    The condition (3) can be confirmed by the fact that every three-dimensional klt singularity over a perfect field of characteristic $p>5$ are Cohen-Macaulay \cite[Corollary 1.3]{ABL}.
    \footnote{The conditions (2) and (3) can be also checked by \cite[Theorem B and Theorem D]{KTTWYY3}.}
    
    Finally, (1) can be verified from the fact that every two-dimensional klt singularity (with a possibly imperfect residue field) in characteristic $p>5$ is strongly $F$-regular (see \cite[Theorem 1.2]{Sato-Takagi(generalhyperplane)} or \cite[Theorem A]{Kawakami-Sato2}).
\end{proof}

\begin{proof}[Proof of Theorems \ref{Introthm:F-regular}, \ref{Introthm:klt}, and \ref{Introthm:LZ-conj}]
    Theorems \ref{Introthm:F-regular} and \ref{Introthm:klt} follow from Theorems \ref{thm:main} and \ref{thm:Kaw4}.
    We show Theorem \ref{Introthm:LZ-conj}.
    By Theorem \ref{thm:main}, we have the short exact sequence
    \[
    0\to B\Omega^{[1]}_X \to Z\Omega^{[1]}_X \xrightarrow{C} \Omega^{[1]}_X \to 0.
    \]
    Since $\Omega^{[1]}_X\cong \mathcal{H}om_{\sO_X}(T_X,\sO_X)$ is locally free, the above exact sequence splits locally, showing that $X$ is $F$-liftable by \cite[Theorem 3.3]{Kawakami-Takamatsu}.
\end{proof}

As a corollary of Theorem \ref{Introthm:F-regular}, we provide a purely algebraic proof of the regular extension theorem for one-forms on klt singularities in characteristic zero. This result was originally proven by Greb--Kebekus--Kov\'{a}cs \cite{GKK} using Steenbrink-type vanishing, which relies on mixed Hodge theory.

\begin{cor}\label{cor:RET for klt in ch=0}
    Let $X$ be a klt variety over a field $k$ of characteristic zero.
    Then $X$ satisfies the regular extension theorem for one-forms, i.e., for any proper birational morphism $f\colon Y\to X$, the natural restriction 
    \[
    f_{*}\Omega^{[1]}_Y\hookrightarrow \Omega^{[1]}_X
    \]
    is surjective.
\end{cor}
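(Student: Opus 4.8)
The plan is to deduce the corollary from Theorem~\ref{Introthm:F-regular} by reduction modulo $p$, so that the Hodge-theoretic input of \cite{GKK} is replaced by the characteristic-$p$ statement. The assertion is local on $X$, so we may assume $X=\Spec R$ with $R$ a normal klt local ring essentially of finite type over $k$; since normality, the klt property, the existence of a fixed log resolution, and the formation of the cokernel of a restriction map of coherent sheaves all descend along a faithfully flat base change, we may further assume that $k$ is finitely generated over $\Q$. Finally, it suffices to prove surjectivity of $f_{*}\Omega^{[1]}_{Y}\hookrightarrow\Omega^{[1]}_{X}$ for a single log resolution $f\colon Y\to X$: for any proper birational $Y'\to X$ with $Y'$ normal we may dominate $Y'$ by a log resolution $Y$, and since $\Omega^{1}$ of a smooth variety is reflexive, pushing forward gives $f_{*}\Omega^{1}_{Y}\subseteq (Y'\to X)_{*}\Omega^{[1]}_{Y'}\subseteq\Omega^{[1]}_{X}$, so surjectivity for $Y$ implies it for $Y'$.

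First I would establish that $X$ satisfies the \emph{logarithmic} extension theorem for one-forms. Spread the whole configuration out over a finitely generated $\Z$-subalgebra $A\subset k$: choose models $\mathcal{X}\to\Spec A$, $\mathcal{Y}\to\Spec A$ and a projective birational $A$-morphism $\mathcal{f}\colon\mathcal{Y}\to\mathcal{X}$, flat over $A$, with $\Exc(\mathcal{f})=\mathcal{E}$ relatively snc, whose base change to $k$ recovers $(X,Y,f,E)$. After shrinking $\Spec A$ we may assume, by the standard compatibility of $\mathcal{f}_{*}$ and of reflexive hulls of differentials with base change to closed fibers (as in the reduction-mod-$p$ arguments in \cite{GKKP}), that for every closed point $s\in\Spec A$ the fiber $\mathcal{X}_s$ is normal with $\mathcal{X}_s\setminus(\mathcal{X}_s)_{\mathrm{reg}}$ of codimension $\ge 2$, $\mathcal{f}_s\colon\mathcal{Y}_s\to\mathcal{X}_s$ is a log resolution, and the restriction to $\mathcal{X}_s$ of the cokernel $\mathcal{Q}$ of $\mathcal{f}_{*}\Omega^{[1]}_{\mathcal{Y}/A}(\log\mathcal{E})\to\Omega^{[1]}_{\mathcal{X}/A}$ is the cokernel of $(\mathcal{f}_s)_{*}\Omega^{1}_{\mathcal{Y}_s}(\log\mathcal{E}_s)\to\Omega^{[1]}_{\mathcal{X}_s}$. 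Since klt singularities in characteristic $0$ reduce to strongly $F$-regular singularities for all sufficiently large $p$ \cite{Hara98,Hara-Watanabe} (see also \cite{Takagi-Watanabe}), after shrinking $\Spec A$ once more every closed point $s$ of residue characteristic $p\gg 0$ has $\mathcal{X}_s$ strongly $F$-regular. By Theorem~\ref{Introthm:F-regular}, such $\mathcal{X}_s$ satisfies the logarithmic extension theorem for one-forms, i.e.\ $\mathcal{Q}|_{\mathcal{X}_s}=0$. If the generic fiber $Q$ of $\mathcal{Q}$ were nonzero, then $\Supp\mathcal{Q}$ would dominate $\Spec A$, hence meet $\mathcal{X}_s$ for all $s$ in a dense open subset of $\Spec A$, in particular for some $s$ of large residue characteristic, contradicting the above. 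Therefore $Q=0$, so $X$ satisfies the logarithmic extension theorem for one-forms.

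It remains to upgrade from logarithmic to regular extension, and here we use crucially that $X$ is klt. Fix a log resolution $f\colon Y\to X$, let $j\colon U\hookrightarrow X$ be the smooth locus, and let $\sigma\in\Omega^{[1]}_{X}=j_{*}\Omega^{1}_{U}$. By the previous step, $f^{*}(\sigma|_{U})$ extends to $\widetilde\sigma\in H^{0}(Y,\Omega^{1}_{Y}(\log E))$. For each prime component $E_i$ of $E$ one has the residue $\res_{E_i}(\widetilde\sigma)\in H^{0}(E_i,\sO_{E_i})$, and because $X$ is klt every $f$-exceptional divisor has discrepancy $>-1$; the residue-vanishing argument for klt pairs (which uses only this discrepancy inequality together with repeated blow-ups along the locus where the residue is nonvanishing — not Hodge theory; cf.\ \cite{GKK}) then gives $\res_{E_i}(\widetilde\sigma)=0$ for all $i$. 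Hence $\widetilde\sigma\in H^{0}(Y,\Omega^{1}_{Y})$, exhibiting $\sigma$ in the image of $f_{*}\Omega^{[1]}_{Y}\to\Omega^{[1]}_{X}$. Since $\sigma$ was arbitrary, this restriction is surjective, which by the reduction of the first paragraph proves the regular extension theorem for one-forms on $X$.

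The step I expect to require the most care is the spreading-out bookkeeping in the second paragraph: one must ensure that on a dense open of $\Spec A$ the closed fibers remain normal with bad locus of codimension $\ge 2$, that $\mathcal{f}_s$ is a log resolution, and that both $\mathcal{f}_{*}$ and the formation of reflexive differentials commute with restriction to those fibers, so that $\mathcal{Q}|_{\mathcal{X}_s}$ really is the obstruction measured by Theorem~\ref{Introthm:F-regular}. All of this is standard for $p\gg 0$ but must be assembled explicitly. By contrast, the passage from logarithmic to regular extension is comparatively formal once the klt discrepancy inequality is available.
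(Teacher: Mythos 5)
Your proposal is correct and follows essentially the same route as the paper: spread out over a finitely generated $\Z$-algebra, use Hara's theorem that klt reduces to strongly $F$-regular for $p\gg 0$, apply Theorem \ref{Introthm:F-regular} on closed fibers to deduce the logarithmic extension theorem in characteristic zero, and then use the klt discrepancy condition to pass from logarithmic to regular extension. The only differences are presentational: the paper carries out your ``standard compatibility with base change'' step explicitly (via \cite[Lemma 4.1]{Hara98} and a Claim proving that reflexivization commutes with $\otimes_A\kappa(s)$ after shrinking, phrased through reflexivity rather than a cokernel-support argument), and for the final upgrade it simply cites \cite[Theorem 3.1]{Graf-Kovacs} instead of sketching the residue-vanishing argument.
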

\begin{proof} 
    We may assume that $X=\Spec R$ is affine.
    We fix a log resolution $f\colon Y\to X$ which is projective and set $E\coloneq \Exc(f)$.
    We prove that $f_{*}\Omega^{[1]}_Y$ is reflexive.
    We take a finitely generated $\mathbb{Z}$-subalgebra $A$ of $k$, and take
    flat models $f_A\colon Y_A \to X_A=\Spec R_A$ of $f\colon Y\to X$ and a flat model $E_A$ of $E$.
    After enlarging $A$, we may assume that $f_A$ is proper birational with $E_A=\Exc(f_A)$.
    Moreover, we can also assume that $Y_A$ is smooth over $\Spec A$ and $E_A$ is a normal crossing divisor over $\Spec A$ in the sense of \cite[Assumption 8.9]{EV}.
    In particular, the sheaf $\Omega^1_{Y_A/A}(\log E_A)$ is defined as in \cite[Definition 8.10]{EV}.
    
    We shrink $\Spec A$ again so that for every point $s \in \Spec A$, the fiber $f_s\colon Y_s \to X_s$ of $f$ over $s$ is a log resolution of a normal variety $X_s$ with $E_s\coloneqq (E_A)\otimes_A \kappa(s) = \Exc(f_s)$, where $\kappa(s)$ is the residue field of $s$.
    Since we have $\Omega_{Y_A/A}^1(\log E_A)\otimes_A \kappa(s)  \cong \Omega_{Y_s/\kappa(s)}^1(\log E_s)$, it follows from \cite[Lemma 4.1]{Hara98} that we may assume that 
    \[
    (f_{A,*} \Omega_{Y_A/A}^1(\log E_A))\otimes_A \kappa(s) \cong f_{s,*} \Omega_{Y_s/\kappa(s)}^1(\log E_s)
    \]
    for every $s\in \Spec A$.
    Since $X$ is klt, by enlarging $A$, we may assume that the fiber $X_s$ over $s\in \Spec A$ is strongly $F$-regular for all closed points $s \in \Spec A$ by \cite[Theorem 5.2]{Hara98}.
     Then, by Theorem \ref{Introthm:F-regular}, 
    $f_{s,*}\Omega^{[1]}_{Y_s/\kappa(s)}(\log E_s)$
    is reflexive.
    \begin{cl}
        After enlarging $A$, we may assume that the reflexivization 
        \begin{equation}\label{eq:reduction}
            f_{A,*}\Omega^{i}_{X/A}(\log E_A)\hookrightarrow (f_{A,*}\Omega^{i}_{Y/A}(\log E_A))^{**}
        \end{equation}
        commutes with $\otimes_A \kappa(s)$.
        In particular, $f_{A,*}\Omega^{i}_{X/A}(\log E_A)\to (f_{A,*}\Omega^{i}_{Y/A}(\log E_A))^{**}$ is an isomorphism.
    \end{cl}
    \begin{proof}
        By enlarging $A$, we may assume that the cokernel of the map \eqref{eq:reduction} is a free $A$-module by the generic freeness.
        Then, the last assertion immediately follows from the first assertion.

        We show the commutativity with $\otimes_A \kappa(s)$ of \eqref{eq:reduction}.
        We denote an $R_A$-module $f_{A,*}\Omega^{i}_{X/A}(\log E_A)$ by $M_A$.
        It suffices to show that, after enlarging $A$,
        \[
        \Hom_{R_A}(M_A,R_A)\otimes_A \kappa(s)\cong \Hom_{R_s}(M_s,R_s),
        \]
        where $R_s\coloneqq R_A\otimes_A \kappa(s)$ and $M_s\coloneqq M_A\otimes_A \kappa(s)$.
        We take $m,n\in\Z_{>0}$ so that 
    \[
    R_A^{\oplus m}\to R_A^{\oplus n} \to M_A\to 0
    \]
    is exact.
    Then we obtain an exact sequence
    \[
    0\to \Hom_{R_A}(M,R_A)\to \Hom_{R_A}(R_A^{\oplus n},R_A) \to \Hom_{R_A}(R_A^{\oplus m},R_A).
    \]
    By enlarging $A$, all cokernels of the above maps are free over $A$, and by tensoring with $\kappa(s)$, we obtain an exact sequence
    \[
     0\to \Hom(M,R_A)\otimes_A \kappa(s)\to \Hom(R_A^{\oplus n},R_A)\otimes_A \kappa(s) \to \Hom (R_A^{\oplus m},R_A)\otimes_A \kappa(s).
    \]
    Since we have a natural isomorphism 
    \[
    \Hom_{R_A}(R_A^{\oplus l},R_A)\otimes_A \kappa(s)\cong \Hom_{R_s}(R_s^{\oplus l},R_s)
    \]
    for every $l\geq 0$, we obtain $\Hom_{R_A}(M,R_A)\otimes_A \kappa(s)\cong \Hom_{R_s}(M_s, R_s)$.
    \end{proof}
    Considering the restriction of $f_{A,*}\Omega^{i}_{X/A}(\log E_A)\cong (f_{A,*}\Omega^{i}_{Y/A}(\log E_A))^{**}$ to the generic fiber, we deduce that $f_{*}\Omega^{i}_{X}(\log E)$ is reflexive.
    By \cite[Theorem 3.1]{Graf-Kovacs}, we obtain $f_{*}\Omega^{[1]}_Y=f_{*}\Omega^{[1]}_Y(\log E)$, and we conclude.
\end{proof}

\section{Logarithmic differential sheaf}\label{Section:Logarithmic differential sheaf}

In this section, we study differential forms on schemes not necessarily of finite type over a field.
The main objects of interest will be the spectrum of local ring of a variety at a codimension two point, or a resolution of singularities of such a local ring.
We first remark that the dimension formula holds for such schemes as below.

\begin{rem}\label{rem:pair}
Let $R$ be a local ring essentially of finite type over a field $k$, and let
$X$ be an integral scheme of finite type over $R$.
Note that, for every point $x\in X$, a local ring $\sO_{X,x}$ is essentially of finite type over $k$, and it is catenary by \cite[Corollary 3.6]{HS06}.
Then, for every irreducible closed subscheme $Y \subseteq X$ containing $x\in X$, 
it follows from \cite[\href{https://stacks.math.columbia.edu/tag/02I6}{Tag 02I6} and \href{https://stacks.math.columbia.edu/tag/02IZ}{Tag 02IZ}]{stacks-project} that we have
\[
\dim(\sO_{X,x}) - \dim(\sO_{Y,x}) =\codim_X(Y).
\]
\end{rem}

\begin{defn}\label{defn:pair}
    Throughout Sections \ref{Section:Logarithmic differential sheaf} and \ref{Section:LET for surfaces}, 
    we say $(X,D)$ is a \emph{pair over a field $k$} if $X$ is a normal irreducible scheme essentially of finite type over $k$ (see \cite[Notation (13-c)]{KTTWYY1} for the definition), and $D$ is an effective $\Q$-divisor whose coefficients are less than or equal to one.
\end{defn}

\begin{defn}\label{defn:SNC}
A pair $(X,D=\sum_{i=1}^n c_i D_i)$ over a field $k$ has \emph{snc support} (resp.~\emph{geometrically snc support}) if 
the following holds:
\begin{enumerate}
    \item $X$ is regular (resp.~geometrically regular over $k$).
    \item For every sequence $1 \leq i_1 <i_2< \dots < i_m \leq n$, every connected component of 
$D_{i_1} \cap \dots \cap D_{i_m}$
is regular (resp.~geometrically regular over $k$) and has codimension $m$ whenever it is non-empty.
\end{enumerate}
\end{defn}

\begin{rem}
    By Remark \ref{rem:pair}, the following are equivalent to each other.
    \begin{enumerate}
        \item A pair $(X,D)$ over a field $k$ has snc support. 
        \item For every point $P \in X$ and local equations $x_1, \dots, x_\ell$ at $P$ of irreducible components $D_1,\ldots,D_\ell$ containing $P$, the sequence $x_1, \dots, x_\ell$ is a part of a regular system of parameters.
    \end{enumerate}
\end{rem}

\subsection{Definition of \texorpdfstring{$\mathcal{C}$}{C}-differentials}
In what follows, we generalize the notion of $\mathcal{C}$-differentials, which was introduced in \cite{Cam11} (see also \cite{JK,Kebekus-Rousseau}) to the case of positive characteristic.

\begin{lem}\label{lem:rsop vs diff basis}
  Let $(R,\m)$ be a local ring essentially of finite type over a field $k$, and let $x_1, \dots, x_r \in \m$ be a regular sequence such that $S\coloneqq R/(x_1, \dots, x_r)$ is geometrically regular over $k$.
  Then there exists a sequence $x_{r+1}, \dots, x_{N} \in R$ such that 
  \[
  \Omega^1_{R/k} = \bigoplus_{i=1}^N R dx_i.
  \]
\end{lem}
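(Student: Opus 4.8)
The plan is to reduce the statement to the local structure of geometrically regular $k$-algebras and to exactness of the conormal sequence under the geometric regularity hypothesis. First I would use that $S = R/(x_1,\dots,x_r)$ is geometrically regular over $k$, together with the fact that $R$ is essentially of finite type over $k$ (hence excellent and Noetherian), to conclude that $\Omega^1_{S/k}$ is a free $S$-module, say of rank $N-r$ for some $N$; indeed geometric regularity over $k$ is equivalent to formal smoothness over $k$ for essentially-of-finite-type local algebras, and formally smooth algebras have locally free differential modules. Choose elements $x_{r+1},\dots,x_N \in R$ whose images $\bar x_{r+1},\dots,\bar x_N$ in $S$ are such that $dx_{r+1},\dots,dx_N$ form an $S$-basis of $\Omega^1_{S/k}$ (possible after localizing, which we may since $S$ is local).

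Next I would invoke the conormal exact sequence for the surjection $R \to S$ with kernel $I = (x_1,\dots,x_r)$:
\[
I/I^2 \xrightarrow{\ \delta\ } \Omega^1_{R/k}\otimes_R S \longrightarrow \Omega^1_{S/k} \longrightarrow 0.
\]
Because $S$ is formally smooth (geometrically regular) over $k$, this sequence is in fact left-exact and splits: $\delta$ is injective with a cokernel that is a projective $S$-module. Since $x_1,\dots,x_r$ is a regular sequence, $I/I^2$ is free over $S$ with basis the classes of $x_1,\dots,x_r$, and $\delta$ sends $\overline{x_i} \mapsto dx_i \otimes 1$. Combining this with the chosen lifts, the images $dx_1\otimes 1,\dots,dx_N\otimes 1$ form an $S$-basis of $\Omega^1_{R/k}\otimes_R S$.

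Finally I would lift this basis from $S$ back to $R$. The module $\Omega^1_{R/k}$ is finitely generated over the Noetherian local ring $R$, and we have produced elements $dx_1,\dots,dx_N$ whose images generate $\Omega^1_{R/k}\otimes_R S = \Omega^1_{R/k}/\m\,\Omega^1_{R/k}$ after further reducing mod $\m$ (note $S/\m_S = R/\m$); wait — more carefully, one should pass all the way down to the residue field $k(\m) = R/\m$, where Nakayama applies directly. So: the images of $dx_1,\dots,dx_N$ span $\Omega^1_{R/k}\otimes_R R/\m$, hence by Nakayama $\Omega^1_{R/k} = \sum_{i=1}^N R\,dx_i$. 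For freeness of this generating set, I would argue that $\Omega^1_{R/k}$ is itself free: $R$ is geometrically regular over $k$ if and only if $S$ is and $x_1,\dots,x_r$ is a regular sequence extends appropriately — more robustly, one checks directly that the conormal sequence together with freeness of $\Omega^1_{S/k}$ forces $\Omega^1_{R/k}$ to have no relations among $dx_1,\dots,dx_N$, by comparing ranks/minimal numbers of generators at the generic point and using that $R$ is a domain (being essentially of finite type over $k$ and, in the application, a localization of a variety). The main obstacle I anticipate is the last freeness assertion: making sure that having $N$ generators that restrict to a basis mod $\m$ really yields a \emph{free} module rather than merely a minimal generating set — this is where one genuinely needs geometric regularity of $R$ over $k$ (equivalently $\Omega^1_{R/k}$ locally free of the expected rank), not just of $S$, so I would either invoke that $R$ is geometrically regular (which should hold in the intended application, where $R$ is a local ring of a smooth resolution) or deduce local freeness of $\Omega^1_{R/k}$ from the split conormal sequence plus flatness bookkeeping, and then a free module with a generating set of size equal to its rank is automatically based by that set.
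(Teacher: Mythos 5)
Your overall route coincides with the paper's: pass to the free module $\Omega^1_{S/k}$ (geometric regularity of $S$), lift a basis to $x_{r+1},\dots,x_N\in R$, use the conormal sequence $I/I^2 \to \Omega^1_{R/k}\otimes_R S \to \Omega^1_{S/k}\to 0$ together with the fact that $I/I^2$ is free on the classes of the regular sequence, and conclude that $dx_1\otimes 1,\dots,dx_N\otimes 1$ is an $S$-basis of $\Omega^1_{R/k}\otimes_R S$, whence the $dx_i$ generate $\Omega^1_{R/k}$ by Nakayama. The genuine gap is exactly the step you flag and then leave open: freeness of $\Omega^1_{R/k}$. Appealing to ``the intended application, where $R$ is a local ring of a smooth resolution'' does not prove the lemma as stated, and your alternative fallbacks do not close it either: the split conormal sequence only controls $\Omega^1_{R/k}\otimes_R S$, not $\Omega^1_{R/k}$ itself, and the generic-rank comparison is unavailable because $R$ is not assumed to be a domain and, more importantly, knowing that the generic rank equals $N$ already amounts to knowing generic (geometric) smoothness of $R$ over $k$ -- precisely what is in question.

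The missing ingredient is that geometric regularity deforms along a regular sequence, so the hypotheses already force $R$ itself to be geometrically regular over $k$: for every finite purely inseparable extension $k'/k$, the ring $R\otimes_k k'$ is Noetherian local, $x_1,\dots,x_r$ remains a regular sequence in it by flat base change, and the quotient is $S\otimes_k k'$, which is regular; hence $R\otimes_k k'$ is regular (regularity deforms, Stacks Tag 00NU). Consequently a finite-type model of $R$ is smooth over $k$ in a neighborhood of the chosen point, so $\Omega^1_{R/k}$ is free; tensoring with $S$ shows its rank is $N$, and a surjection of free $R$-modules of the same finite rank is an isomorphism, so $dx_1,\dots,dx_N$ is a basis. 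This is exactly how the paper argues: it chooses a model $(X,Z,P)$ of $(R,S)$, shrinks so that both $X$ and $Z$ are geometrically regular (hence smooth) over $k$, obtains freeness of $\Omega^1_{R/k}$ and $\Omega^1_{S/k}$, and then runs the same split conormal-sequence argument you propose. With that one deformation observation added, your proof is complete and agrees with the paper's.
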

\begin{proof}
    Take a scheme $X$ of finite type over $k$, a closed subscheme $Z \subseteq X$, and a point $P \in Z$ such that we have
    \[
    R \cong \sO_{X,P} \textup{ and } S \cong \sO_{Z,P}.
    \]
    After shrinking $X$, we may assume that $X$ and $Z$ are geometrically regular over $k$.
    Then $X$ and $Z$ are smooth over $k$ \cite[\href{https://stacks.math.columbia.edu/tag/038X}{Tag 038X}]{stacks-project}, and $\Omega^1_{R/k}$ and $\Omega^1_{S/k}$ are free \cite[\href{https://stacks.math.columbia.edu/tag/02G1}{Tag 02G1}]{stacks-project}.
    Moreover, by localizing the second exact sequence \cite[\href{https://stacks.math.columbia.edu/tag/06AA}{Tag 06AA}]{stacks-project}, we obtain the short exact sequence
    \[
    0 \to I/I^2 \to \Omega^1_{R/k} \otimes_R S \to \Omega^1_{S/k} \to 0.
    \]
    Take a sequence $x_{r+1}, \dots, x_{N} \in R$ such that 
    \[
    d\overline{x}_{r+1}, \dots, d\overline{x}_{N} \in \Omega^1_{S/k}
    \] 
    is a basis.
    Since the second exact sequence splits, we have
    \[
    \Omega_{R/k}^1 \otimes_R S = \bigoplus_{i=1}^N S (dx_i \otimes 1).
    \]
    Combining this with the freeness of $\Omega_{R/k}^1$, we obtain the desired equation.
\end{proof}

\begin{NOTATION}\label{Notation:basis of Omega}
    Let $K/k$ be a field extension. Fix $x_1, \dots, x_N \in K$ such that $\Omega^1_{K/k} = \bigoplus_{i=1}^N K dx_i$.
    Then, for an integer $i \geq 0$, we have
    \[
    \Omega^i_{K/k} = \bigoplus_{\bolda} K d\boldx_{\bolda},
    \]
    where $\bolda$ runs through all elements of 
    \[
    \Sigma_i \coloneqq \choosing{[N]}{i} = \{\bolda=(a_1, \dots, a_i) \mid 1 \leq a_1 < \dots < a_i \leq N\}
    \]
    and we write
    \[
    d\boldx_{\bolda} \coloneqq dx_{a_1} \wedge \cdots \wedge dx_{a_i}.
    \]
    Moreover, for an integer $m \geq 0$, we have
    \begin{align}\label{eq:Sym}
    \Sym^m_K(\Omega^i_{K/k}) = \bigoplus_{A} K d\boldx^A,
    \end{align}
    where $A$ runs through all elements of 
    \[
    \Theta_{i,m} \coloneqq \Map(\Sigma_i, \N)_m = \left\{ A \colon \Sigma_i \to \N \ \middle| \sum_{\bolda \in \Sigma_i} A(\bolda) = m \right\}
    \]
    and we write $d\boldx^A \coloneqq \prod_{\bolda \in \Sigma_i} d\boldx_{\bolda}^{A(\bolda)}$.
    
    Finally, for every element $A \in \Theta_{i,m}$, we denote by $A(s)$ the order of $d\boldx^A$ at $dx_s$, that is,
    \[
    A(s) \coloneqq \sum_{\bolda \in \Sigma_i(s)} A(\bolda) \in \mathbb{Z}_{\geq0},
    \]
    where we write $\Sigma_i(s) \coloneqq \{\bolda=(a_1, \dots, a_i) \in \Sigma_i \mid a_k=s \textup{ for some $k$}\}$.
\end{NOTATION}

\begin{rem}\label{rem:Sym0}
In the above notation, if one has $i>N$, then we have
\[
\Sym^m_K(\Omega_{K/k}^i) = \left\{\begin{matrix} K & (\textup{if } m=0) \\ 0 &(\textup{if } m \neq 0). \end{matrix}\right.
\]
The equation \eqref{eq:Sym} still holds in this case since we have
\[
\Theta_{i,m}= \left\{\begin{matrix} \{\emptyset_{\Z_{\geq0}} \} & (\textup{if } m=0) \\ \emptyset &(\textup{if } m \neq 0) \end{matrix}\right.,
\]
where $\emptyset_{\Z_{\geq0}} \in \Z_{\geq0}^{\emptyset} = \mathrm{Map}(\emptyset, \Z_{\geq0})$ is the empty function.
\end{rem}

\begin{deflem}[\textup{cf.~\cite[Subsection 3.C]{JK}}]\label{deflem:differential module local}
    Let $(X, D=\sum_{i=1}^r c_iD_i)$ be a pair over a field $k$ with geometrically snc support.
    We further assume that $X=\Spec R$ for a local ring $R$.
    Let $K$ be the fraction field of $R$, and let $x_i \in R$ be an equation of $D_i$ for $1 \leq i \leq r$.
    We take a sequence $x_{r+1}, \dots, x_{N} \in R$ as in Lemma \ref{lem:rsop vs diff basis}.
    For integers $i,m \geq0$, we define the $R$-submodule $\Sym_{\mathcal{C}}^m \Omega^i_{R/k}(\log D)$ of $\Sym_K^m \Omega_{K/k}^i$ as 
    \[
        \Sym_{\mathcal{C}}^m \Omega^i_{R/k}(\log D) \coloneqq \bigoplus_{A \in \Theta_{i,m}} R \cdot \frac{d\boldsymbol{x}^A}{\prod_{s=1}^r x_s^{\lfloor c_s A(s) \rfloor}},
    \]
    where we adopt the notation in Notation \ref{Notation:basis of Omega}.
    This $R$-submodule is independent of the choice of $x_1,\dots, x_N$.
\end{deflem}

\begin{proof}
    Take another sequence $y_1, \ldots, y_N \in R$ satisfying the assumption.
    We fix $A \in \Theta_{i,m}$.
    It suffices to show that an element
    \[
    \omega \coloneqq \frac{d\boldsymbol{y}^A}{\prod_{s=1}^r y_s^{\lfloor c_s A(s) \rfloor}} \in \Sym^m_{K} \Omega^i_{K/k}
    \]
    is contained in 
    \[
    M\coloneqq\bigoplus_{A \in \Theta_{i,m}} R \cdot \frac{d\boldsymbol{x}^A}{\prod_{s=1}^r x_s^{\lfloor c_s A(s) \rfloor}}
    \]
    
    We first consider the case where $y_i = x_i$ for $i \geq2$.
    Take a unit $u \in R$ such that $y_1 = u x_1$ and we write $du = \sum_{i=1}^N r_i dx_i$ with $r_i \in R$.
    Then we have
    \[
    d\boldsymbol{y}_{\bolda} =\left\{  \begin{array}{lll} u d \boldx_{\bolda} +x_1 du \wedge d\boldx_{\check{\bolda}} &= (u+x_1r_1) d \boldx_{\bolda} + \sum_{i=2}^N x_1 r_i d\boldx_{\check{\bolda}_i} & \textup{(if $\bolda \in \Sigma_i(1)$)} \\ d \boldx_{\bolda} & &\textup{(otherwise)}\end{array} \right. ,
    \]
    where $\check{\bolda} \in \Sigma_{i-1}^N$ is the sequence obtained from $\bolda$ by removing $a_1=1$ and $\check{\bolda}_i$ is the sequence obtained from $\check{\bolda}$ by adding $i$.
    This shows that if we write
    \[
    d\boldsymbol{y}^A = \sum_{B \in \Theta_{i,m}} \alpha(B) d\boldx^B
    \]
    with $\alpha(B) \in K$, then every element $B$ with $\alpha(B)  \neq 0$ satisfies the following properties:
    \begin{itemize}
        \item $B(i) \geq A(i)$ for $i \neq 1$, and
        \item $\alpha(B) \in x_1^{A(1)-B(1)}R$.
    \end{itemize}
    Combining this with the assumption that $c_1 \leq1$, we have
    \[
    \frac{\alpha(B)}{y_1^{\lfloor c_1 A(1) \rfloor}} \in \frac{1}{x_1^{\lfloor c_1 B(1) \rfloor}} R, \textup{ and }    \frac{1}{y_i^{\lfloor c_i A(i) \rfloor}} \in \frac{1}{x_i^{\lfloor c_i B(i) \rfloor}} R \ (\forall i \neq 1).
    \]
    This proves $\omega \in M$, as desired.

    By the above argument, we may assume that $x_i= y_i$ for every $i=1, \dots, r$.
    In this case, if we write
    \[
    d\boldsymbol{y}^A = \sum_{B \in \Theta_{i,m}} \alpha(B) d\boldx^B
    \]
    with $\alpha(B) \in K$, then for every element $B$ with $\alpha(B)  \neq 0$, we have $B(i) \geq A(i)$ for $i =1, \dots, r$.
    This proves $\omega \in M$.
\end{proof}

\begin{rem}\label{rem:variant of C-diff}
    In the above notation, let $V$ be a finite dimensional $k$-module with basis $\{v_{j}\}_{j \in J}$.
    For integers $i,m \geq0$, we define the $R$-submodule $\Sym_{\mathcal{C}}^m (V \otimes_k \Omega^i_{R/k}(\log D))$ of $\Sym_K^m (V \otimes_k \Omega_{K/k}^i)$ as follows:
    We write
    \begin{align*}
    \Theta_{i,m}^{J} & \coloneqq \Map(J \times \Sigma_i , \Z_{\ge 0})_m \\
    & = \{ A \colon J \times \Sigma_i \to \Z_{\ge 0} \mid \sum_{(j,\bolda) \in J \times \Sigma_i} A(j,\bolda)=m\}
    \end{align*}
    and for every $A \in \Theta_{i,m}^{J}$, we set
    \[
    (\boldv \otimes d\boldx)^A \coloneqq \prod_{(j,\bolda) \in J \times \Sigma_{i}} (v_j \otimes d\boldx_{\bolda})^{A(j,\bolda)} \in \Sym_K^m (V \otimes_k \Omega_{K/k}^i).
    \]
    We note that $\{(\boldv \otimes d\boldx)^A\}_{A \in \Theta_{i,m}^{J}}$ is a basis of $\Sym_K^m (V \otimes_k \Omega_{K/k}^i)$.
    By using these notations, we define
    \[
        \Sym_{\mathcal{C}}^m (V \otimes_k \Omega^i_{R/k}(\log D)) \coloneqq \bigoplus_{A \in \Theta_{i,m}^{J}} R \cdot \frac{(\boldv \otimes d\boldsymbol{x})^A}{\prod_{s=1}^r x_s^{\lfloor c_s A(s) \rfloor}},
    \]
    where we write $A(s) \coloneqq \sum_{j \in J, \bolda \in \Sigma_i(s)} A(j,\bolda)$.
    As in the proof of Definition-Lemma \ref{deflem:differential module local}, this $R$-submodule is independent of the choice of $x_1,\dots, x_N$ and $\{v_{j}\}_{j \in J}$.
\end{rem}

\begin{defn}
    Let $(X,D)$ be a pair over a field $k$ with geometrically snc support.
    We define a locally free $\sO_X$-module $\Sym^m_{\mathcal{C}} \Omega^i_{X/k}(\log D)$ as a subsheaf of the constant sheaf $\Sym^m_K \Omega^i_{K(X)/k}$ on $X$ satisfying
    \[
    (\Sym^m_{\mathcal{C}} \Omega^i_{X/k}(\log D))_x = \Sym^m_{\mathcal{C}} \Omega^i_{\sO_{X,x}/k}(\log D_x)
    \]
    at every point $x \in X$.
    The existence of such a subsheaf is ensured by Definition-Lemma \ref{deflem:differential module local}.
    Similarly, for a finite dimensional $k$-module $V$, we define the locally free $\sO_X$-module $\Sym^m_{\mathcal{C}} (V \otimes_k \Omega^i_{X/k}(\log D))$ by 
        \[
    (\Sym^m_{\mathcal{C}} (V \otimes_k \Omega^i_{X/k}(\log D)))_x = \Sym^m_{\mathcal{C}} (V \otimes_k  \Omega^i_{\sO_{X,x}/k}(\log D_x))
    \]

\end{defn}

\begin{rem}\label{rem:C-differential}
    For a pair $(X,D)$ over a field $k$ with geometrically snc support, the following hold.
    \begin{enumerate}[label=\textup{(\arabic*)}]
    \item If all the coefficients of $D$ is contained in the set \[\{1-1/n \mid n \in \mathbb{Z}_{>0}\quad\text{with}\quad p\nmid n\} \cup \{1\},\] then the definition of $\Sym_{\mathcal{C}}^m \Omega_{X/k}^i (\log D)$ coincides with \cite[Definition 3.5]{JK}.
    \item If all the coefficients of $D$ are one, then we have
    \[
    \Sym_{\mathcal{C}}^m \Omega_{X/k}^i (\log D) = \Sym^m_{\sO_X} \Omega_{X/k}^i(\log D).
    \]
    \item If $\dim X=1$ and $X$ is proper over $k$, then we have
    \[
    \Sym^m_{\mathcal{C}} (V \otimes_k \Omega^i_{X/k}(\log D)) \cong \left\{\begin{array}{lll} 
    \Sym^m_k V \otimes_k \sO_X & \textup{($i=0$ or $m=0$)}\\
    \Sym^m_k V \otimes_k \sO_X(\lfloor m(K_X+D) \rfloor) & \textup{($i=1$ and $m \neq 0$)} \\ 
    0 & \textup{($i\geq 2$ and $m \neq 0$)} \end{array} \right., 
    \]
    where $K_X$ denotes a canonical divisor.
    \end{enumerate}
\end{rem}

\begin{defn}
    Let $(X,D)$ be a pair over a field $k$.
    We further assume that $X$ is geometrically regular over $k$ in codimension one and each component of $D$ is geometrically reduced over $k$.
    Then for every integers $i,m>0$, we define
    \[
    \Sym^{[m]}_{\mathcal{C}} \Omega^{[i]}_{X/k}(\log D) \coloneqq \iota_* \Sym^m_{\mathcal{C}} \Omega^i_{U/k}(\log D|_U),
    \]
    where $U \subseteq X$ is the maximal locus where $(U,D|_U)$ has geometrically snc support over $k$ and $\iota \colon U \into X$ is the inclusion.
    Note that the complement of $U$ has codimension at least two, and thus $\Sym^{[m]}_{\mathcal{C}} \Omega^{[i]}_{X/k}(\log D)$ is a reflexive $\sO_X$-module.
\end{defn}


\subsection{Basic properties for \texorpdfstring{$\mathcal{C}$}{C}-differentials}
We first recall the residue sequence \eqref{eq:res seq} and the restriction sequence \eqref{eq:restr seq}.

Let $(X,D=\sum_{i=0}^r a_i D_i)$ be a pair over a field $k$ with geometrically snc support
and $\iota\colon  D_0 \into X$ be the inclusion map.
Assume that $a_0=1$, and set $\widetilde{D} \coloneqq D-D_0 = \sum_{i=1}^n a_i D_i$.

As in \cite[Properties 2.3 (b)]{EV}, by using local coordinates, we can construct the residue map 
\[
\mathrm{res}^1_{D_0} : \Omega^i_{X/k}(\log \lfloor D \rfloor) \to \iota_* \Omega^{i-1}_{D_0/k}(\log \lfloor \widetilde{D}|_{D_0} \rfloor)
\]
which fits into the residue exact sequence
\begin{align}\label{eq:res seq}
0 \to \Omega^i_{X/k}(\log \lfloor \widetilde{D} \rfloor) \to \Omega^i_{X/k}(\log \lfloor D \rfloor) \xrightarrow{\res^{1}_{D_0}} \iota_* \Omega^{i-1}_{D_0/k}(\log \lfloor \widetilde{D}|_{D_0} \rfloor) \to 0.
\end{align}
Using local coordinates again, we can also define the residue map
\begin{align}\label{map:res}
\res^{m}_{D_0} \colon \Sym^m_{\mathcal{C}}\Omega^i_{X/k}(\log \widetilde{D})\onto \iota_* \Sym^m_{\mathcal{C}} \Omega^{i-1}_{D_0/k}(\log \widetilde{D}|_{D_0}),
\end{align}
which coincides with $\Sym^m(\mathrm{res}_{D_0}^1)$ on $X \setminus \Supp(\widetilde{D})$.

Moreover, the natural surjection $\sO_X \onto \iota_* \sO_{D_0}$ induces the restriction map
\begin{align}\label{map:restr}
\restr^{m}_{D_0} \colon \Sym^m_{\mathcal{C}}\Omega^i_{X/k}(\log D) \onto \iota_* \Sym^m_{\mathcal{C}} \Omega^{i}_{D_0/k}(\log \widetilde{D}|_{D_0}).
\end{align}
When $m=1$, it fits int the restriction exact sequence (cf.~\cite[Properties 2.3 (c)]{EV}).
\begin{align}\label{eq:restr seq}
0 \to \Omega^i_{X/k}(\log \lfloor D \rfloor)(-D_0) \to \Omega^i_{X/k}(\log \lfloor \widetilde{D} \rfloor) \xrightarrow{\restr^{1}_{D_0}} \iota_* \Omega^{i}_{D_0/k}(\log \lfloor \widetilde{D}|_{D_0} \rfloor) \to 0.
\end{align}

Next, we consider the behavior of $\mathcal{C}$-differentials under the change of base fields.

\begin{lem}\label{lem:change of field pre}
    Let $\kappa/k$ and $K/\kappa$ be a finitely generated separable field extensions.
    Then for every integers $i,m,p,l \ge 0$, there are $K$-submodules
        \[
        W^{p,l}=W_{i,m}^{p,l} \subseteq \Sym_{K}^m\Omega_{K/k}^i
        \]
        with the following properties:
        \begin{enumerate}[label=\textup{(\roman*)}]
            \item 
            We have $W^{p,l}_{i,m}/W^{p,l+1}_{i,m} \cong W^{p+1,0}_{i,l} \otimes_K \Sym^{m-l}_{K} (\Omega^{p}_{\kappa/k} \otimes_{\kappa} \Omega^{i-p}_{K/\kappa})$.
            \item One has 
            \[
            W^{p,l}_{i,m} = \left\{ \begin{array}{ll} 0 & \textup{(if $ l \ge m+1$, or $ p \ge i+1$ and $m \neq 0$)} \\
            K & \textup{(if $ l =m= 0$)} \\
            \Sym^m_K \Omega^i_{K/k} & \textup{(if $p=l=0$)} 
            \end{array}\right.
            \]
        \end{enumerate}
\end{lem}

\begin{proof}
Noting that 
\[
0 \to \Omega_{\kappa/k}^1 \otimes_{\kappa} K \to \Omega_{K/k}^1 \to \Omega_{K/\kappa}^1 \to 0
\]
is exact (\stacksproj{02K4}), the image $V^p=V^p_{i} \coloneqq \Im(\Phi^p)$ of the natural map $\Phi^p \colon \Omega_{\kappa/k}^p  \otimes_{\kappa} \Omega_{K/k}^{i-p} \to \Omega^{i}_{K/k}$ defined by 
\[
\Phi^p((df_1\wedge \dots df_p) \otimes (dg_1 \wedge \dots dg_{i-p}) ) = df_1\wedge \dots df_p \wedge dg_1 \wedge \dots dg_{i-p}
\]
gives a filtration
\[
\Omega_{K/k}^i =V^0 \supseteq V^1 \supseteq \dots \supseteq V^{i+1} =0
\]
with $V^p/V^{p+1} \cong \Omega_{\kappa/k}^p \otimes_{\kappa} \Omega_{K/\kappa}^{i-p}$ (c.f.~\cite[I. Exercise 5.16 (d)]{Har}).

Next, since we have the exact sequence
\[
0 \to V^{p+1} \to V^p \to \Omega_{\kappa/k}^p \otimes_{\kappa} \Omega_{K/\kappa}^{i-p} \to 0,
\]
the image $W^{p,l}=W^{p,l}_{i,m} \coloneqq \Im(\Psi^{p,l}_{m,i})$ of the natural map $\Psi^{p,l}_{i,m} \colon \Sym^l_K V^{p+1} \otimes_{K} \Sym^{m-l}_K V^p \to \Sym^m_K V^p$ defined by 
\[
\Psi^{p,l}_{i,m}(\alpha \otimes \beta ) = \alpha \beta
\]
gives a filtration
\[
\Sym^m_K V^p =W^{p,0} \supseteq W^{p,1} \supseteq \dots \supseteq W^{p,m+1} =0
\]
with 
\begin{align*}
W^{p,l}/W^{p,l+1} & \cong \Sym^l_K V^{p+1} \otimes_{K} \Sym^{m-l}_K (\Omega_{\kappa/k}^p \otimes_{\kappa} \Omega^{i-p}_{K/\kappa}) \\
& = W^{p+1,0}_{i,l} \otimes_K \Sym^{m-l}_K (\Omega_{\kappa/k}^p \otimes_{\kappa} \Omega^{i-p}_{K/\kappa}).
\end{align*}
(c.f.~\cite[I. Exercise 5.16 (c)]{Har}).

\end{proof}

\begin{rem}\label{rem:change of field}
We give an explicit description of the isomorphism 
\[
f=f^{p,l}_{i,m} \colon W^{p+1,0}_{i,l} \otimes_{K} \Sym^{m-l}_K (\Omega_{\kappa/k}^p \otimes_{\kappa} \Omega^{i-p}_{K/\kappa}) \xrightarrow{\sim} W^{p,l}_{i,m}/W^{p,l+1}_{i,m}
\]
in Lemma \ref{lem:change of field pre} (i).

\textbf{Step 1}:
We first give an explicit description of the isomorphism $ V^p/V^{p+1} \cong \Omega_{\kappa/k}^p \otimes_{\kappa} \Omega_{K/\kappa}^{i-p}$.
Let $dx_1, \dots, dx_{M}$ be a basis of $\Omega_{\kappa/k}^1$ and $dx_{M+1}, \dots, dx_{N}$ be a basis of $\Omega_{K/\kappa}^1$.
Then we have
\[
V^p = \bigoplus_{\bolda \in \Sigma_{i,p}^{N,M}} K d\boldx_{\bolda},
\]
where $\Sigma_{i,p}^{N,M}$ is the subset of $\Sigma_i=\Sigma^N_i$ defined by 
\begin{align*}
\Sigma_{i,p}^{N,M} & = \{\bolda =(a_1,\dots,a_i) \in \Sigma_i^N \mid a_p \le M \}.
\end{align*}
See Notation \ref{Notation:basis of Omega} for the definition of $\Sigma_{i}^N$.

The isomorphism $ V^p/V^{p+1} \cong \Omega_{\kappa/k}^p \otimes_{\kappa} \Omega_{K/\kappa}^{i-p}$ is induced by the morphism
\[
V^p \to \Omega_{\kappa/k}^p \otimes_{\kappa} \Omega_{K/\kappa}^{i-p}\ ; \ d\boldx_{\bolda} \mapsto (dx_{a_1} \wedge \dots \wedge dx_{a_{p}}) \otimes (dx_{a_{p+1}} \wedge \dots \wedge dx_{a_i}).
\]

\textbf{Step 2}: In this step, we provide a basis of $\Sym^{m-l}_{K}(\Omega^p_{\kappa/k} \otimes_{\kappa} \Omega_{K/\kappa}^{i-p})$.
We note that $\Omega^p_{\kappa/k} = \bigoplus_{\bolda} \kappa d\boldx_{\bolda}$, where $\bolda$ runs through all elements of 
\[
\choosing{[M]}{p} \coloneqq \{\bolda=(a_1, \dots, a_p) \mid 1 \le a_1< \dots < a_p \le M \}
\]
and $\Omega^{i-p}_{K/\kappa} = \bigoplus_{\boldb} d\boldx_{\boldb}$, where $\boldb$ runs through all elements of 
\[
\choosing{[N] \setminus[M]}{i-p} \coloneqq \{\boldb=(b_1, \dots, b_{i-p}) \mid M+1 \le b_1< \dots < b_{i-p} \le N \}.
\]
Therefore, the $K$-vector space $\Sym^{m-l}_{K}(\Omega^p_{\kappa/k} \otimes_{\kappa} \Omega_{K/\kappa}^{i-p})$ is spanned by $\{(d\boldx \otimes d\boldx)^C\}_{C}$,
where $C$ runs through all elements in
\begin{align*}
\Delta_{i,m}^{p,l} & \coloneqq \Map( \choosing{[M]}{p} \times \choosing{[N] \setminus [M]}{i-p}, \Z_{\ge 0})_{m-l} \\
& = \left\{ C \colon \choosing{[M]}{p} \times \choosing{[N] \setminus [M]}{i-p} \to \Z_{\ge 0} \middle| \sum_{(\bolda,\boldb)} C(\bolda,\boldb) =m-l \right\}.
\end{align*}
and we write 
\[
(d\boldx \otimes d\boldx)^C = \prod_{(\bolda,\boldb)} (d\boldx_{\bolda} \otimes d\boldx_{\boldb})^{C(\bolda,\boldb)} \in \Sym^{m-l}_{K}(\Omega^p_{\kappa/k} \otimes_{\kappa} \Omega_{K/\kappa}^{i-p}),
\]
where $(\bolda, \boldb)$ runs through all elements of $\choosing{[M]}{p} \times \choosing{[N] \setminus [M]}{i-p}$.

\textbf{Step 3}: In this step, we give an explicit description of the isomorphism $f$.
By the definition of $W^{p,l}_{i,m}$, we have
\[
W^{p,l}_{i,m} = \bigoplus_{A \in \Lambda_{i,m}^{p,l}} K d\boldx^A,
\]
where $\Lambda^{p,l}_{i,m}$ is the subset of $\Theta_{i,m} =\Map (\Sigma_i, \Z_{\ge 0})_m$ consists of all elements $A \in \Theta_{i,m}$ such that 
\begin{itemize}
    \item every element $\bolda \in \Sigma_{i}$ with $A (\bolda) \neq 0$ is contained in $\Sigma_{i,p}^{N,M}$, and
    \item we have 
    \[
    \sum_{\bolda \in \Sigma_{i, p+1}^{N,M}} A (\bolda)  \ge l.
    \]
\end{itemize}

On the other hand, by identifying $\choosing{[M]}{p} \times \choosing{[N] \setminus [M]}{i-p}$ as the subset $\Sigma^{N,M}_{i,p} \setminus \Sigma^{N,M}_{i,p+1}$ of $\Sigma_{i}$, we may consider every element $C \in \Delta^{p,l}_{i,m}$ as a map from $\Sigma_{i}$ to $\Z_{\ge 0}$ whose restriction to the complement of $\Sigma^{N,M}_{i,p} \setminus \Sigma^{N,M}_{i,p+1}$ is zero.

Then $f$ fits into the following commutative diagram
\[
\xymatrix{
W^{p+1,0}_{i,l} \otimes_{K} \Sym^{m-l}_K (\Omega_{\kappa/k}^p \otimes_{\kappa} \Omega^{i-p}_{K/\kappa}) \ar@{}[d]|*=0[@]{\cong} \ar^-{f}[r] &  W^{p,l}_{i,m}/W^{p,l+1}_{i,m} \ar@{}[d]|*=0[@]{\cong}\\
\displaystyle \bigoplus_{B \in \Lambda^{p+1,0}_{i,l}}   K d\boldx^B \otimes_K \bigoplus_{C \in \Delta^{p,l}_{i,m}} K (d\boldx \otimes d\boldx)^C \ar^-{g}[r]& \displaystyle \bigoplus_{A \in \Lambda^{p,l}_{i,m} \setminus \Lambda^{p,l+1}_{i,m}} K \overline{d\boldx ^A} 
},\]
where $g$ maps $d\boldx^B \otimes (d \boldx \otimes d\boldx)^C$ to $\overline{d\boldx^{B+C}}$.
\end{rem}

\begin{lem}\label{lem:change of fields}
    Let $k$ be field, $\kappa/k$ be a finitely generated separable field extension.
    Let $(X,D=\sum_i D_i)$ be a pair over $\kappa$ with geometrically snc support.
    Then $(X,D)$ is also a pair over $k$ with geometrically snc support, and for every $p,l \ge 0$, there is locally free $\sO_X$-subsmodule
        \[
        F^{p,l}=F_{i,m}^{p,l} \subseteq \Sym_{\mathcal{C}}^m\Omega_{X/k}^i(\log D)
        \]
        with the following properties:
        \begin{enumerate}[label=\textup{(\roman*)}]
            \item 
            There is an injection 
            \[
            f^{p,l}_{i,m} \colon F^{p,l}_{i,m}/F^{p,l+1}_{i,m} \into F^{p+1,0}_{i,l}(\lceil D \rceil) \otimes_K \Sym^{m-l}_{\mathcal{C}} (\Omega^{p}_{\kappa/k} \otimes_{\kappa} \Omega^{i-p}_{K/\kappa}(\log D))
            \]
            whose image contains $F^{p+1,0}_{i,l} \otimes_K \Sym^{m-l}_{\mathcal{C}} (\Omega^{p}_{\kappa/k} \otimes_{\kappa} \Omega^{i-p}_{K/\kappa}(\log D))$.
            (See Remark \ref{rem:variant of C-diff} for the definition of $\Sym^{m-l}_{\mathcal{C}} (\Omega^{p}_{\kappa/k} \otimes_{\kappa} \Omega^{i-p}_{X/\kappa}(\log D))$.)
            \item 
            If $ p \ge i-1$ or $m=l$, then $f^{p,l}_{i,m}$ induces 
            \[
            F^{p,l}_{i,m}/F^{p,l+1}_{i,m} \cong F^{p+1,0}_{i,l} \otimes_K \Sym^{m-l}_{\mathcal{C}} (\Omega^{p}_{\kappa/k} \otimes_{\kappa} \Omega^{i-p}_{K/\kappa}(\log D)).
            \]
            \item We have 
            \[
            F^{p,l}_{i,m} = \left\{ \begin{array}{ll} 0 & \textup{(if $ l \ge m+1$, or $ p \ge i+1$ and $m \neq 0$)} \\
            \sO_X & \textup{(if $l=m=0$)} \\
            \Sym^m_{\mathcal{C}} \Omega^i_{X/k}(\log D) & \textup{(if $p=l=0$)} 
            \end{array}\right.
            \]
        \end{enumerate}
\end{lem}

\begin{proof}
Since $\kappa$ is geometrically regular over $k$ \stacksproj{07EQ}, the pair $(X,D)$ has geometrically snc support over $k$ \stacksproj{07QI}.

Let $K=K(X)$ be the function field of $X$ and $\underline{W^{p,l}_{i,m}}_{X}$ be the constant sheaf associated to the sub $K$-module $W^{p,l}_{i,m} \subseteq \Sym^m_{K} \Omega^i_{K/k}$ defined in Lemma \ref{lem:change of field pre}.
We define
\[
F^{p,l}_{i,m} \coloneqq \underline{W^{p,l}_{i,m}}_{X} \cap \Sym_{\mathcal{C}}^m\Omega_{X/k}^i(\log D).
\]

In order to prove that $F^{p,l}_{i,m}$ is locally free, we provide a local description of $F^{p,l}_{i,m}$.
Suppose that $X=\Spec R$ for a local ring $R$.
Take elements $x_{1}, \dots, x_{M} \in \kappa$ so that we have
\[
\Omega^1_{\kappa/k} = \bigoplus_{i=1}^M \kappa dx_{i}.
\]
We also take $x_{M+1}, \dots, x_{M+r}, x_{M+r+1} , \dots, x_N \in R$ such that $x_{M+i}$ is an equation of $D_i$ for $i=1,2, \dots r$ and 
\[
\Omega^1_{R/\kappa} = \bigoplus_{i=M+1}^N R dx_i.
\]
Since $R$ is essentially of finite type over $\kappa$, we may take a smooth variety $V$ over $k$ and a point $P \in V$ such that $R \cong \sO_{V,P}$.
By localizing the first exact sequence \cite[\href{https://stacks.math.columbia.edu/tag/02K4}{Tag 02K4}]{stacks-project}, we have the equation
\begin{align}\label{eq:change of field local coodinate}
\Omega^1_{R/k} = \bigoplus_{i=1}^{N} R dx_i.
\end{align}
Then we have
\begin{align*}
    F^{p,l}_{i,m} & =W^{p,l}_{i,m} \cap \Sym^m_{\mathcal{C}} \Omega^i_{X/k}(\log D) \\
    & = \bigoplus_{A \in \Lambda^{p,l}_{i,m}} K d\boldx^A \cap \bigoplus_{A \in \Theta_{i,m}}R \cdot \frac{d\boldx^A}{\prod_{s=M+1}^{M+r} x_s^{\lfloor c_s A(s) \rfloor}} \\
    & = \bigoplus_{A \in \Lambda^{p,l}_{i,m}} R \cdot \frac{d\boldx^A}{\prod_{s=M+1}^{M+r} x_s^{\lfloor c_s A(s) \rfloor}},
\end{align*}
which is a free $R$-module.

We next verify the conditions (i), (ii) and (iii) in the assertion.
The condition (iii) follow from Lemma \ref{lem:change of field pre} (ii).
For (i), we first note that the natural morphism 
\[
F^{p,l}_{i,m}/F^{p.l+1}_{i,m} \to W^{p,l}_{i,m}/W^{p,l+1}_{i,m}
\]
is injective since we have
\begin{align*}
    F^{p,l+1}_{i,m} & =W^{p,l+1}_{i,m} \cap \Sym^m_{\mathcal{C}} \Omega^i_{X/k}(\log D) \\
    & =W^{p,l+1}_{i,m} \cap (W^{p,l}_{i,m} \cap \Sym^m_{\mathcal{C}} \Omega^i_{X/k}(\log D)) \\
    & =W^{p,l+1}_{i,m} \cap F^{p,l}_{i,m}.
\end{align*}
Therefore, it suffices to show that $F^{p,l}_{i,m}/F^{p,l+1}_{i,m}$ satisfies the inclusions 
\begin{align}\label{eq:inclusion of filtraion}
\begin{split}
F^{p,l}_{i,m}/F^{p,l+1}_{i,m} & \subseteq f( F^{p+1,0}_{i,l}(\lceil D \rceil) \otimes \Sym^{m-l}_{\mathcal{C}} (\Omega^p_{\kappa/k} \otimes_{\kappa} \Omega^{i-p}_{X/\kappa}(\log D))) \textup{ and} \\
F^{p,l}_{i,m}/F^{p,l+1}_{i,m} & \supseteq f( F^{p+1,0}_{i,l} \otimes \Sym^{m-l}_{\mathcal{C}} (\Omega^p_{\kappa/k} \otimes_{\kappa} \Omega^{i-p}_{X/\kappa}(\log D))),
\end{split}
\end{align}
where $f=f^{p,l}_{i,m}$ is the isomorphism in Remark \ref{rem:change of field}.

After localization, we may adopt the notation in \eqref{eq:change of field local coodinate} and Remark \ref{rem:change of field}.
Then $F^{p,l}_{i,m}/F^{p,l+1}_{i,m}$ is a free $\sO_X$-module with basis
\[
\left\{\frac{1 }{\prod_{s=M+1}^{M+r} x_s^{\lfloor c_s A(s) \rfloor}} \overline{d\boldx^A} \middle| A \in \Lambda^{p,l}_{i,m} \setminus \Lambda^{p,l+1}_{i,m} \right\}
\]
On the other hand, $F^{p+1,0}_{i,l} \otimes \Sym^{m-l}_{\mathcal{C}} (\Omega^p_{\kappa/k} \otimes_{\kappa}  \Omega^{i-p}_{X/\kappa}(\log D))$ is a free $\sO_X$-module with basis
\[
\left\{ \frac{1 }{\prod_{s=M+1}^{M+r} x_s^{\lfloor c_s B(s) \rfloor + \lfloor c_sC(s) \rfloor}} (d \boldx^B \otimes (d \boldx \otimes d \boldx)^C) \middle| B \in \Lambda^{p+1,0}_{i,l}, C \in \Delta^{p,l}_{i,m} \right\}.
\]
Then the inclusions \eqref{eq:inclusion of filtraion} follow from the inequality
\[
\lfloor c_s B(s) \rfloor + \lfloor c_sC(s) \rfloor \le \lfloor c_s(B(s)+C(s)) \rfloor \le \lfloor c_s B(s) \rfloor + \lfloor c_sC(s) \rfloor+1
\]
for every $B,C$ and $s$.

We finally prove the assertion in (ii).
By the above argument, it suffices to show the equality
\[
\lfloor c_s B(s) \rfloor + \lfloor c_sC(s) \rfloor = \lfloor c_s(B(s)+C(s)) \rfloor 
\]
for every $s=M+1, \dots, M+r$.
If $p \ge i-1$, then for every element $B \in \Lambda^{p+1,0}_{i,l}$ and $s>M$, we have $B(s)=0$.
In this case, we have
\[
\lfloor c_s B(s) \rfloor + \lfloor c_sC(s) \rfloor = \lfloor c_sC(s) \rfloor = \lfloor c_s(B(s)+C(s)) \rfloor.
\]
If $m=l$, then this equality follows from the fact that $\Delta^{p,m}_{i,m}=\{0\}$.
\qedhere
\end{proof}

Let $G$ be a finite group and $X$ be an integral scheme over a field $k$. Assume that $G$ acts on $X$ via a group homomorphism $\phi\colon G \to \mathrm{Aut}_k(X)$. 
For each $g \in G $, the automorphism $\phi_g \colon X \to X $ induces a map $d\phi_g\colon \phi_g^* \Omega_{X/k}^1 \to \Omega_{X/k}^1 $.  
By taking the adjoint, we obtain a homomorphism  
\[
\theta_g\colon \Omega_{X/k}^1 \to (\phi_g)_* \Omega_{X/k}^1,  
\]  
which endows $\Omega_{X/k}^1 $ with the structure of a $G $-sheaf (see \cite[Definition B.1]{GKKP} for the definition).  
By localizing $\theta_g $, we obtain a $G$-sheaf structure on the constant sheaf of $\Omega_{K(X)/k}^1 $. 
Furthermore, for all $i, m \geq 0$, the constant sheaf associated with $\Sym^m_{K(X)}\Omega_{K(X)/k}^i$ can naturally be regarded as a $G$-sheaf.

We further assume that $X=\Spec R$ is affine.
Consider the quotient $Y\coloneqq\Spec R^G$ with the trivial $G$-action.
For a quasi-coherent $G$-sheaf $\mathcal{G}$ of $\sO_X$-modules, the push-forward $\pi_*\mathcal{G}$ is a quasi-coherent $G$-sheaf of $\sO_Y$-modules, and its $G$-invariant subsheaf is denoted by $(\pi_*\mathcal{G})^G$.
On the other hand, for a quasi-coherent sheaf $\mathcal{F}$ on $Y$ with the trivial $G$-action, the pullback $\pi^*\mathcal{F}$ is a quasi-coherent $G$-sheaf on $X$.
If $\mathcal{F}$ is a locally free coherent sheaf on $Y$, then we have
\begin{align}\label{eq:G-sheaf}
(\pi_*\pi^*\mathcal{F})^G = \mathcal{F}.
\end{align}

\begin{lem}\label{lem:G-sheaf}
    Let $G$ be a finite group acting on an integral scheme $X=\Spec R$ essentially of finite type over a perfect field $k$, and let $\pi \colon X \to Y = \Spec R^G$ be the quotient morphism.
    \begin{enumerate}[label=\textup{(\arabic*)}]
    \item If $L\coloneqq K(X)$ is separable over $K\coloneqq K(Y)$, then we have an equality
    \[
    (\Sym^m_{L}\Omega_{L/k}^i)^G = \Sym^m_{K}\Omega_{K/k}^i.
    \]
    as submodules of $\Sym^m_{L} \Omega^i_{L/k}$.
    \item If $\pi$ is \'{e}tale and $D$ is a $\Q$-divisor on $Y$ such that $(Y,D)$ is a pair over $k$ with geometrically snc support, then for integers $i,m \geq0$, we have an equality
    \[
    (\pi_* \Sym^m_{\mathcal{C}}\Omega_{X/k}^i(\log \pi^{-1}D))^G = \Sym^m_{\mathcal{C}}\Omega_{Y/k}^i(\log D).
    \]
    as subsheaves of the constant sheaf of $\Sym^m_{L} \Omega^i_{L/k}$.
    \end{enumerate}
\end{lem}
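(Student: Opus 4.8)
The plan is to prove the two parts in turn: part (1) is the algebraic core, and part (2) follows from (1) together with the étale-local structure of $\pi$ and the pullback formula \eqref{eq:G-sheaf}.

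\textbf{Part (1).} First I would reduce to the case where $G$ acts faithfully on $L$: any $g\in G$ acting trivially on $L$ acts trivially on $\Omega^1_{L/k}$, hence on $\Sym^m_L\Omega^i_{L/k}$, so replacing $G$ by its image in $\operatorname{Aut}(L)$ changes neither side of the asserted equality. Since $R$ is integral over $R^G$ and $K(Y)=K(X)^G$, the extension $L/K$ is finite, and it is separable by hypothesis; thus $K\to L$ is étale and the natural map $L\otimes_K\Omega^1_{K/k}\to\Omega^1_{L/k}$ is an isomorphism. Taking exterior and symmetric powers gives $\Sym^m_L\Omega^i_{L/k}\cong L\otimes_K\Sym^m_K\Omega^i_{K/k}$, and under this identification the $G$-action is the action on the left-hand tensor factor alone, since $G$ fixes $K$ pointwise and the inclusion $\Omega^i_{K/k}\hookrightarrow\Omega^i_{L/k}$ is $G$-equivariant for the trivial action on its source. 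Choosing a $K$-basis of $\Sym^m_K\Omega^i_{K/k}$ then presents $\Sym^m_L\Omega^i_{L/k}$ $G$-equivariantly as a direct sum of copies of $L$, so that $(\Sym^m_L\Omega^i_{L/k})^G\cong L^G\otimes_K\Sym^m_K\Omega^i_{K/k}=K\otimes_K\Sym^m_K\Omega^i_{K/k}=\Sym^m_K\Omega^i_{K/k}$, compatibly with the inclusions into $\Sym^m_L\Omega^i_{L/k}$.

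\textbf{Part (2).} Since $\pi$ is étale, $X$ is geometrically regular over $k$, and each component of $\pi^{-1}D$ is reduced with the same coefficient as the component of $D$ beneath it; hence $(X,\pi^{-1}D)$ again has geometrically snc support over $k$ and $\mathcal{F}_X\coloneqq\Sym^m_{\mathcal{C}}\Omega^i_{X/k}(\log\pi^{-1}D)$ is defined. I would show $\mathcal{F}_X\cong\pi^*\mathcal{F}_Y$ as $G$-sheaves, where $\mathcal{F}_Y\coloneqq\Sym^m_{\mathcal{C}}\Omega^i_{Y/k}(\log D)$ carries the trivial $G$-action. To this end, fix $x\in X$ over $y=\pi(x)$, pick local equations $t_1,\dots,t_r\in\mathcal{O}_{Y,y}$ of the components of $D$ through $y$, and extend to $t_1,\dots,t_N$ with $\Omega^1_{\mathcal{O}_{Y,y}/k}=\bigoplus_i\mathcal{O}_{Y,y}\,dt_i$. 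As $\pi$ is étale one has $\Omega^1_{\mathcal{O}_{X,x}/k}\cong\mathcal{O}_{X,x}\otimes_{\mathcal{O}_{Y,y}}\Omega^1_{\mathcal{O}_{Y,y}/k}$, so the images of $t_1,\dots,t_N$ in $\mathcal{O}_{X,x}$ form an adapted system for $(X,\pi^{-1}D)$ at $x$ (each $t_s$ with $s\le r$ is either a unit or a local equation, with coefficient $c_s$, of the unique component of $\pi^{-1}D$ through $x$ lying over $D_s$). By the explicit formula and the independence of choices in Definition-Lemma \ref{deflem:differential module local}, the stalk $(\mathcal{F}_X)_x$ is the $\mathcal{O}_{X,x}$-submodule of $\Sym^m_L\Omega^i_{L/k}$ generated by $(\mathcal{F}_Y)_y$, i.e.\ $(\pi^*\mathcal{F}_Y)_x$; and the resulting isomorphism $\mathcal{F}_X\cong\pi^*\mathcal{F}_Y$ is $G$-equivariant because, by part (1), the image of $\mathcal{F}_Y$ in the constant sheaf lies in $\Sym^m_K\Omega^i_{K/k}=(\Sym^m_L\Omega^i_{L/k})^G$, so $g(f\omega)=(gf)\omega$ for $f$ a local function and $\omega$ a local section of $\mathcal{F}_Y$. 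Since $\mathcal{F}_Y$ is locally free coherent, \eqref{eq:G-sheaf} gives $(\pi_*\mathcal{F}_X)^G=(\pi_*\pi^*\mathcal{F}_Y)^G=\mathcal{F}_Y$ as subsheaves of the constant sheaf of $\Sym^m_L\Omega^i_{L/k}$.

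The step I expect to cause the most trouble is the local bookkeeping in part (2): checking that an adapted coordinate system on $Y$ pulls back to one on $X$ with the coefficients of $\pi^{-1}D$, hence the floors $\lfloor c_sA(s)\rfloor$, unchanged; correctly treating components of $D$ that split in the cover; and making sure the identification $\mathcal{F}_X\cong\pi^*\mathcal{F}_Y$ is compatible both with the embeddings into the constant sheaf and with the $G$-actions, so that \eqref{eq:G-sheaf} can be invoked. Part (1) is shorter, its only subtle points being the reduction to a faithful action and the observation that $G$ acts trivially on the $\Omega^i_{K/k}$-factor.
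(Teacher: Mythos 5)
Your proposal is correct, and for part (2) it runs on the same engine as the paper: identify $\Sym^m_{\mathcal{C}}\Omega^i_{X/k}(\log \pi^{-1}D)$ with $\pi^*\Sym^m_{\mathcal{C}}\Omega^i_{Y/k}(\log D)$ as $G$-sheaves and then invoke the pullback formula \eqref{eq:G-sheaf} for the locally free sheaf $\Sym^m_{\mathcal{C}}\Omega^i_{Y/k}(\log D)$. The differences are structural rather than substantive. First, the paper proves (2) before (1) and obtains (1) by shrinking $Y$ so that $Y$ is geometrically regular and $\pi$ is \'etale, whereas you prove (1) directly: from separability you get $\Omega^1_{L/k}\cong L\otimes_K\Omega^1_{K/k}$ with $G$ acting only through the $L$-factor, and then take invariants basiswise using $L^G=K$ (the identity $K(Y)=K(X)^G$ for a finite group acting on a domain, which you quote without proof but which is standard). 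Your direct argument is slightly more self-contained and makes the role of $L^G=K$ explicit; the paper's deduction of (1) from (2) is shorter but hides the field theory inside the shrinking step. Second, for the key isomorphism $\mathcal{F}_X\cong\pi^*\mathcal{F}_Y$ the paper shrinks $Y$ until the adapted coordinates $x_1,\dots,x_N$ and the equation $D=\Div(x_1\cdots x_r)$ are global and then states the isomorphism, while you verify it stalkwise, checking that pulled-back coordinates remain adapted, that each component of $D$ through $y$ has a unique (regular, reduced, same-coefficient) component of $\pi^{-1}D$ through a given $x$ above it, and that the embeddings into the constant sheaf and the $G$-actions match; this local bookkeeping is exactly the content the paper compresses into the word ``natural,'' so your version is a legitimate, if more verbose, substitute. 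One small remark: the appeal to part (1) in your equivariance check is stronger than needed — you only use that $G$ fixes $\Sym^m_K\Omega^i_{K/k}$ elementwise, which is immediate — so the two parts are in fact logically independent in your write-up, in contrast to the paper where (1) depends on (2).
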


\begin{proof}
    We first prove (2).
    Since $\pi$ is \'etale, we have $\Omega^1_{X/Y}=0$ \cite[\href{https://stacks.math.columbia.edu/tag/02GU}{Tag 02GU}]{stacks-project}.
    Combining this with the first exact sequence \cite[\href{https://stacks.math.columbia.edu/tag/02K4}{Tag 02K4}]{stacks-project}, we have $\Omega^1_{X/k} \cong \pi^* \Omega^1_{Y/k}$, and in particular, one has $\Omega_{L/k}^1 \cong \Omega_{K/k}^1 \otimes_K L$.
    Therefore, all sheaves in the assertion are subsheaves of the constant sheaf associated to $\Sym_{L}^m\Omega_{L/k}^i \cong \Sym_K^m\Omega_{K/k}^i \otimes_K L$.

    In order to prove the equality in (2), after shrinking $Y$, we may assume that there exists a sequence $x_1, \dots, x_N \in H^0(Y,\sO_Y)$ such that
    \[
    \Omega^1_{Y/k} = \bigoplus_{i=1}^N \sO_Y dx_i\quad\textup{and}\quad D = \Div(x_1 \cdots x_r).
    \]
    Then we have the natural isomorphism
    \[
    \Sym_{\mathcal{C}}^m\Omega_{X/k}^i(\log \pi^{-1}D) \cong \pi^*\Sym_{\mathcal{C}}^m\Omega_{Y/k}^i(\log D)
    \]
    as $G$-sheaves.
    The assertion now follows from the equation \eqref{eq:G-sheaf}.

    For (1), after shrinking $Y$, we may assume that $Y$ is geometrically regular over $k$ and $\pi$ is \'etale.
    Then the assertion in (1) follows from (2).
\end{proof}

\section{Logarithmic extension theorem in dimension two}\label{Section:LET for surfaces}

In this section, we prove Theorem \ref{Introthm:2-dim} based on fundamental results in Section \ref{Section:Logarithmic differential sheaf}.

\subsection{Residue maps and restriction maps}
In this subsection, we generalize the residue sequence \eqref{eq:res seq} and the restriction sequence \eqref{eq:restr seq} to the case where the pair is two-dimensional and tamely dlt (Theorem \ref{thm:reside and restr ex seq}).
This provides a generalization of Graf's result \cite[Theorems 1.4 and 1.5]{Gra} to the case of imperfect residue fields.

\begin{lem}\label{lem:key inclusion in DVR case}
    Let $(R,\m=(x))$ and $(S,\n=(y))$ be discrete valuation rings essentially of finite type over a perfect field $k$ and $\phi\colon R \into S$ be a finite extension of degree prime to $p=\chara(k)$.
    Suppose that $K$ (resp.~$L$) is the fraction field of $R$ (resp.~$S$) and $D$ be the $\Q$-divisor $(1-\frac{1}{e})\{\m\}$ on $\Spec R$, where $e$ is the ramification index of $S$ over $R$.
    Then for integers $i,m \geq0$, we have an equality
        \[
        \Sym^m_S \Omega^i_{S/k} \cap \Sym^m_K \Omega^i_{K/k} = \Sym^m_{\mathcal{C}} \Omega^i_{R/k}(\log D)
        \]
        as submodules of $\Sym^m_L \Omega^i_{L/k}$.
\end{lem}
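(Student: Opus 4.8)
The statement is an equality of $R$-submodules of $\Sym^m_L\Omega^i_{L/k}$, and since both sides are finitely generated $R$-modules contained in a fixed ambient module, it suffices to verify the equality after localizing and completing, i.e. over the completions $\widehat R = k'[[x]]$ and $\widehat S = k''[[y]]$ — except that we cannot simply complete because residue fields may be imperfect; instead I would work directly over the DVRs, using explicit differential bases. The strategy is to choose compatible differential bases: by Lemma \ref{lem:rsop vs diff basis} applied to $R$ (with the regular element $x$, noting $R/(x)$ is geometrically regular over $k$ since it is the residue field of a normal point of a variety over a perfect field — here I must be slightly careful and use instead that $R$ is essentially of finite type with $R/\m$ separable over $k$, or reduce to that case), write $\Omega^1_{R/k}=\bigoplus_{j=1}^N R\,dx_j$ with $x_1=x$, and similarly $\Omega^1_{S/k}=\bigoplus_{j=1}^N S\,dy_j$ with $y_1=y$ (the ranks agree because $L/K$ is finite separable, so $\Omega^1_{L/k}\cong\Omega^1_{K/k}\otimes_K L$, which also pins down that $dx_2,\dots,dx_N$ remain an $S$-basis of a complement). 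Since the extension has degree prime to $p$, it is tamely ramified, so $x=u y^e$ for a unit $u\in S$, and one may arrange $y_j=x_j$ for $j\geq 2$ after adjusting the basis. Then $dx = e u y^{e-1}\,dy + y^e\,du$, and $du=\sum_j r_j\,dy_j$ with $r_j\in S$; crucially $e$ is a unit in $S$.

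With these coordinates, I would compute the valuation (order in $y$) of a general monomial $d\boldsymbol x^A/\,?$ expressed in the $dy_j$'s. The key local computation: for $A\in\Theta_{i,m}^N$, the element $d\boldsymbol x^A$, written in the $y$-basis, is a sum of monomials $d\boldsymbol y^B$ with $B(1)\geq ?$ controlled by $A(1)$; more precisely each occurrence of $dx=dx_1$ contributes either $e u y^{e-1} dy_1$ (lowering the $x_1$-order by one while the $y_1$-order drops by $e-1$) or $y^e du$ (which raises $y_1$-order by at least... ). The upshot — exactly paralleling the independence computation in the proof of Definition-Lemma \ref{deflem:differential module local} — is that $d\boldsymbol x^A = \sum_B \alpha(B)\,d\boldsymbol y^B$ with $\alpha(B)\in y^{e\cdot A(1) - B(1) + (\text{correction})}S$ and $B(j)\geq A(j)$ for $j\neq 1$. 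This lets me show the inclusion $\subseteq$: an element $\omega\in\Sym^m_S\Omega^i_{S/k}\cap\Sym^m_K\Omega^i_{K/k}$, having no poles along $\{\m\}$ beyond what $S$ allows and being $K$-rational, has $x_1$-order at each basis monomial $d\boldsymbol x^A$ bounded below by $-\lfloor (1-\tfrac1e)A(1)\rfloor = -\lfloor \tfrac{(e-1)A(1)}{e}\rfloor$, because the $S$-integrality forces the $y_1$-order to be $\geq 0$ and the conversion factor between $x_1$-order and $y_1$-order is multiplication by $e$. The reverse inclusion $\supseteq$ is the easier direction: each generator $d\boldsymbol x^A/x^{\lfloor c A(1)\rfloor}$ of $\Sym^m_{\mathcal C}\Omega^i_{R/k}(\log D)$ is visibly in $\Sym^m_K\Omega^i_{K/k}$, and one checks it lies in $\Sym^m_S\Omega^i_{S/k}$ by the same valuation bookkeeping, using $e\lfloor c A(1)\rfloor = e\lfloor\tfrac{(e-1)A(1)}{e}\rfloor \leq (e-1)A(1)$ so that the pole order in $y$ of $x^{-\lfloor cA(1)\rfloor}$ times the worst term of $d\boldsymbol x^A$ is nonpositive.

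The main obstacle I anticipate is the precise bookkeeping of $y$-orders when several copies of $dx_1$ appear in a product $d\boldsymbol x^A$ (i.e. when $A(1)\geq 2$) and they interact through the Leibniz rule and the term $y^e\,du$: one must verify the sharp inequality relating $\sum$ of $x_1$-orders to the minimal attainable $y_1$-order across all resulting monomials, and confirm that the extremal monomial (the one achieving pole order exactly $-\lfloor(1-\tfrac1e)A(1)\rfloor$ in the statement) is genuinely realized, so that neither inclusion can be improved. A secondary technical point is justifying the existence of a simultaneous basis with $y_j=x_j$ for $j\geq 2$; this follows from tameness together with the isomorphism $\Omega^1_{L/k}\cong\Omega^1_{K/k}\otimes_K L$ (separability) plus Nakayama over the DVR $S$, but it should be stated carefully. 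Once the coordinates are fixed, everything reduces to the order computations already rehearsed in the proof of Definition-Lemma \ref{deflem:differential module local}, now with the extra unit $e$ making the tame case work.
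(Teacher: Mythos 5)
Your proposal is correct and follows essentially the same route as the paper: compatible bases with $x_1=x$, $y_1=y$, $x_j=y_j$ for $j\geq 2$, the tame relation $x=\epsilon y^e$ with $e$ invertible, the resulting transformation rule for $d\boldx^A$, and valuation bookkeeping for the two inclusions. The only differences are minor: the paper produces the shared coordinates $x_2,\dots,x_r$ inside a coefficient field $\ell\subseteq R$ (so they automatically work for $S$ as well, the residue extension being separable because its degree divides $[L:K]$), rather than your Nakayama argument, and it settles the cross-term bookkeeping you flag by a descending induction on $A(1)$, using that the leading coefficient $(\epsilon')^{A(1)}$ is a unit.
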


\begin{proof}
Since $L$ is separable over $K$, as in the proof of Lemma \ref{lem:G-sheaf}, we have the natural isomorphism 
    \[
    \Sym_L^m \Omega_{L/k}^i \cong (\Sym^m_K \Omega_{K/k}^i) \otimes_K L .
    \]
    This shows that all the modules appearing in the assertion are considered as submodules of $\Sym^m_L \Omega^i_{L/k}$.
    We also note that the ramification index $e$ is not divisible by $p$ since one has the equation $[L:K] = e [S/\n : R/\m] $ (cf.~\cite[\href{https://stacks.math.columbia.edu/tag/09E8}{Tag 09E8}]{stacks-project}).
    
    \textbf{Step 1}: 
    In this step, we will find bases of $\Omega^1_{R/k}$ and $\Omega^1_{S/k}$.

    By \cite[Theorem 28.3]{Matsumura} and \cite[\href{https://stacks.math.columbia.edu/tag/090W}{Tag 090W}]{stacks-project}, we can find a field $\ell\subseteq R$ containing $k$ such that $R/\m$ is finite and separable over $\ell$.
    The second exact sequence for $\ell \into R \onto R/\m$
    shows that $\Omega^1_{R/\ell}$ is generated by a single element $dx$.
    On the other hand, $\Omega^1_{K/\ell} \neq 0$ as $K$ is transcendental over $\ell$.
    This implies that $dx \in \Omega^1_{R/\ell}$ is a torsion-free element and therefore, we have 
    \[
    \Omega^1_{R/\ell} = R dx \cong R.
    \]
    Take elements $x_2, \dots, x_{r} \in \ell$ so that 
    \[
    \Omega^1_{\ell/k} = \bigoplus_{i=2}^r \ell dx_i.
    \]

    Since $R/(x)$ is geometrically regular over $\ell$, so is $R$.
    It then follows from \stacksproj{02K4} that the sequence $x_1 \coloneqq x, x_2, \dots, x_r \in R$ gives a basis $dx_1, \dots, dx_r$ of $\Omega_{R/k}^1$ over $R$.
    Similarly, the sequence 
    \[
    y_1 \coloneqq y, y_2\coloneqq x_2, y_3 \coloneqq x_3 , \dots, y_r\coloneqq x_r \in S
    \]
    provides a basis of $\Omega^1_{S/k}$.

    \textbf{Step 2}:
    We use the notation as in Definition-Lemma \ref{deflem:differential module local}.
    In particular, $\Sym^m_{\mathcal{C}} \Omega^i_{R/k}(\log D)$ is a free $R$-module with a basis $\{d\boldx^A / x_1^{\lfloor (1-1/e)A(1) \rfloor}\}_{A \in \Theta_{i,m}}$ and $\Sym^m_S \Omega^i_{S/k}$ is a free $S$-module with a basis $\{d\boldsymbol{y}^B\}_{B \in \Theta_{i,m}}$.

    By the definition of the ramification index, there exists a unit element $\epsilon \in S$ such that $x_1 = \epsilon y_1^e$.
    Take elements $s_1, \dots ,s_r \in S$ such that we have $d\epsilon = \sum_i s_i dy_i$.
    We then have the equation
    \begin{align*}
    d x_1 &= (e \epsilon y_1^{e-1}+ s_1y_1^e )dy_1 + \sum_{i=2}^r s_i y_1^e dy_i \\
    & = \epsilon' y_1^{e-1} dy_1 + \sum_{i=2}^r s_i y_1^edy_i \\
    & = y_1^{e-1} \cdot ( \epsilon' dy_1 + \sum_{i=2}^r s_iy_1 dy_i),
    \end{align*}
    where $\epsilon' \coloneqq e \epsilon + s_1 y_1 \in S$ is a unit element.
    By using this equation, for every $A \in \Theta_{i,m}$, we can write
    \begin{align}\label{eq:transformation rule}
    d\boldx^A = y_1^{(e-1)A(1)} \cdot ( (\epsilon' )^{A(1)}d\boldsymbol{y}^A + \sum_{B \in \Theta_{i,m}, B(1) <A(1)} \beta_A(B)d\boldsymbol{y}^B)
    \end{align}
    for some $\beta_A(B) \in S$.

    \textbf{Step 3}:
    In this step, we show the containment
    \[
        \Sym^m_S \Omega^i_{S/k} \cap \Sym^m_K \Omega^i_{K/k} \supseteq \Sym^m_{\mathcal{C}} \Omega^i_{R/k}(\log D).
    \]
    Take an element $\omega \in \Sym^m_{\mathcal{C}} \Omega^i_{R/k}(\log D)$ and we write 
    \[
    \omega = \sum_{A \in \Theta_{i,m}} \alpha(A) d\boldx^A
    \]
    with $\alpha(A) \in x_1^{-\lfloor (1-1/e) A(1) \rfloor}R$.
    It then follows from \eqref{eq:transformation rule} that we have
    \begin{align*}
    \alpha(A) d \boldx^A &\in x_1^{-\lfloor (1-1/e) A(1) \rfloor} y_1^{(e-1)A(1)} \Sym^m_S\Omega^i_{S/k} \\
    & = y_1^{-e\lfloor (1-1/e) A(1) \rfloor + (e-1)A(1)}\Sym^m_S\Omega^i_{S/k} \\
    & \subseteq  y_1^{-e(1-1/e) A(1) + (e-1)A(1)}\Sym^m_S\Omega^i_{S/k} \\
    & = \Sym^m_S\Omega^i_{S/k}.
    \end{align*}
    Therefore, we have $\omega \in \Sym^m_S\Omega_{S/k}^i$ as desired.

    \textbf{Step 4}:
    In this step, we verify the converse inclusion.
    Take an element $\omega \in \Sym^m_S \Omega^i_{S/k} \cap \Sym^m_K \Omega^i_{K/k}$.    
    Since $\omega \in \Sym^m_K \Omega^i_{K/k}$, we may write 
    \[
    \omega = \sum_{A \in \Theta_{i,m}} \alpha(A) d\boldx^A
    \]
    with $\alpha(A) \in K$.
    It suffices to prove the containment 
    \[
    \alpha(A) \in \frac{1}{x_1^{\lfloor (1-1/e) A(1) \rfloor}} R
    \]
    by descending induction on $A(1)$.

    We fix $A \in \Theta_{i,m}$ and assume that the containment holds for all $B$ with $B(1) > A(1)$.
    In particular, for such $B$, we have
    \[
    y_1^{(e-1)A(1)} \alpha(B) \in y_1^{e \lfloor (1-1/e)B(1) \rfloor} \alpha(B) S =x_1^{\lfloor (1-1/e)B(1) \rfloor}\alpha(B)S \subseteq S. 
    \]
    On the other hand, it follows from the equation \eqref{eq:transformation rule} that the coefficient of $\omega$ at $d\boldsymbol{y}^A$ is 
    \begin{align}\label{eq:apply transformation rule}
    (\epsilon'y_1^{e-1})^{A(1)}\alpha(A) + \sum_{B \in \Theta_{i,m}, B(1)>A(1)} \beta_{A}(B) y_1^{(e-1)A(1)}\alpha(B).
    \end{align}
    Combining induction hypothesis with the assumption that $\omega \in \Sym^m_S \Omega^i_{S/k}$, we conclude that  
    \[
    y_1^{(e-1)A(1)}\alpha(A) \in S.
    \]
    Let $v \in \Z$ be the valuation of $\alpha(A)$ with respect to $R$.
    It then follows from the above containment that we have
    \[
    (e-1)A(1) + ev \geq0.
    \]
    Therefore, one has
    \[
    v \geq-\lfloor (1 - \frac{1}{e}) A(1) \rfloor,
    \]
    which  implies the desired containment.
\end{proof}

\begin{lem}\label{lem:quasi-etale cover}
    Let $(R,\m)$ be a two-dimensional normal local domain essentially of finite type over a perfect field $k$ of characteristic $p>0$.
    Let $P$ be a prime divisor on $X=\Spec R$ such that
    $(X,P)$ is plt and the Cartier index $\ell$ of $K_X+P$ is prime to $p$.
    We further assume that an $\ell$-th root of unity $\zeta=\sqrt[\ell]{1}$ is contained in $k$.
    Then the following assertions hold:
    \begin{enumerate}[label=\textup{(\arabic*)}]
    \item There exists a finite quasi-\'etale cyclic Galois cover 
    \[
    \gamma \colon Y \to X
    \]
    degree prime to $p$ such that both $Y$ and $Q \coloneqq (\gamma^{-1}(P))_{\mathrm{red}}$ are regular.
    \item    
    Let $G=\Z/\ell\Z$ be the Galois group.
    For every integers $i,m \geq0$ and every rational number $0 \leq c \leq 1$, we have
    \[
    (\gamma_* \Sym^m_{\mathcal{C}}\Omega^i_{Y/k}(\log cQ))^G = \Sym^{[m]}_{\mathcal{C}}\Omega^{[i]}_{X/k}(\log cP).
    \]
    \item For every integers $i,m \geq0$, the $\sO_Q$-module $\Sym^m_{\sO_{Q}}\Omega^i_{Q/k}$ is a $G$-sheaf and we have
    \[
    (\gamma_* \Sym^m_{\sO_{Q}} \Omega^i_{Q/k})^G = \Sym^m_{\mathcal{C}} \Omega^i_{P/k}(\log \mathrm{Diff}_P(0)). 
    \]
    where $\mathrm{Diff}_P(0)$ is the different of $0$ on $P$ \textup{\cite[Subsection 4.1]{Kol}} .
    \end{enumerate}
\end{lem}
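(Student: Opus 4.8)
The plan is to realize $\gamma$ as the index-one cyclic cover of the pair $(X,P)$ attached to $K_X+P$, and then to deduce (2) and (3) from Lemmas~\ref{lem:G-sheaf} and~\ref{lem:key inclusion in DVR case} by comparing reflexive sheaves in codimension one. Since $X=\Spec R$ is local, $\sO_X(\ell(K_X+P))$ is trivial; fixing a trivialization yields the degree-$\ell$ cyclic cover
\[
\gamma\colon Y=\Spec_X\Big(\bigoplus_{j=0}^{\ell-1}\sO_X\big(-j(K_X+P)\big)\Big)\longrightarrow X,
\]
on which $\zeta=\sqrt[\ell]{1}\in k$ acts through $\zeta^j$ on the $j$-th summand, making $\gamma$ a $G=\Z/\ell\Z$-Galois cover. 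Write $x_0$ for the closed point. On $X^\circ:=X\setminus\{x_0\}$ the scheme $X$ is regular and $K_X+P$ is Cartier, so at each codimension-one point $\gamma$ is of the form $\Spec\big(\sO[s]/(s^\ell-u)\big)$ with $u$ a unit; as $\ell$ is prime to $p$ this is \'etale. Hence $\gamma$ is quasi-\'etale and $Y^\circ:=\gamma^{-1}(X^\circ)$ is regular. We may assume $X$ is singular and $x_0\in P$ (otherwise $\ell=1$ and $\gamma=\id$). Then, since $(X,P)$ is plt with $K_X+P$ of index $\ell$ prime to $p$, the classification of plt surface pairs shows that, after an \'etale base change at $x_0$, the cover $\gamma$ is the standard cyclic quotient cover $\AA^2\to(\AA^2,\{x=0\})/\mu_\ell$ (cf.~\cite{Kol}; for the imperfect residue field case see \cite{Sato23}). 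From this \'etale-local description $Y$ is regular, $Q:=(\gamma^{-1}P)_{\mathrm{red}}$ is a regular prime divisor, and $K_Y+Q=\gamma^*(K_X+P)$ is Cartier, which proves (1).

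\textbf{Proof of (2).} Both sides are reflexive $\sO_X$-modules: the right-hand side by definition, and the left-hand side because $\Sym^m_{\mathcal C}\Omega^i_{Y/k}(\log cQ)$ is locally free (Definition-Lemma~\ref{deflem:differential module local}, applicable as $(Y,cQ)$ has geometrically snc support over the perfect field $k$), its pushforward along the finite morphism $\gamma$ to the normal scheme $X$ is reflexive, and the $G$-invariant subsheaf is a direct summand (as $\ell$ is invertible in $\sO_X$), hence reflexive. It therefore suffices to compare the two sides over $X^\circ$, whose complement has codimension two. There $\gamma^\circ:=\gamma|_{Y^\circ}\colon Y^\circ\to X^\circ$ is finite \'etale and $G$-Galois, $(X^\circ,cP|_{X^\circ})$ has geometrically snc support, and $(\gamma^\circ)^{-1}(cP|_{X^\circ})=cQ|_{Y^\circ}$ since $\gamma^\circ$ is unramified; Lemma~\ref{lem:G-sheaf}(2) applied to $\gamma^\circ$ and the boundary $cP|_{X^\circ}$ gives
\[
\big((\gamma^\circ)_*\Sym^m_{\mathcal C}\Omega^i_{Y^\circ/k}(\log cQ|_{Y^\circ})\big)^G=\Sym^m_{\mathcal C}\Omega^i_{X^\circ/k}(\log cP|_{X^\circ}).
\]
Since pushforward along the affine morphism $\gamma$ and formation of $G$-invariants both commute with restriction to the $G$-stable open $X^\circ$, and $\Sym^{[m]}_{\mathcal C}\Omega^{[i]}_{X/k}(\log cP)$ restricts on $X^\circ$ to $\Sym^m_{\mathcal C}\Omega^i_{X^\circ/k}(\log cP|_{X^\circ})$ (as $X^\circ$ lies in the geometrically snc locus of $(X,cP)$), the two reflexive sheaves in (2) agree over $X^\circ$, hence everywhere.

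\textbf{Proof of (3).} Since $G$ acts on $Y$ over $X$, it fixes $P\subseteq X$ pointwise and stabilizes $Q$; the induced $G$-action on $Q$ endows $\Sym^m_{\sO_Q}\Omega^i_{Q/k}$ with a $G$-sheaf structure, exactly as for $\Omega^1$. The restriction $\gamma_Q\colon Q\to P$ is a finite morphism of spectra of discrete valuation rings with ramification index $e$ at the closed point, of degree dividing $\ell$ (hence prime to $p$) and with separable residue and function field extensions, and $Q/G=P$; moreover $\mathrm{Diff}_P(0)=(1-\tfrac1e)\{\m_P\}$ by the Riemann--Hurwitz formula (equivalently by \cite{Kol}). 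Now $\big(\gamma_*\Sym^m_{\sO_Q}\Omega^i_{Q/k}\big)^G$ is the $G$-invariant $\sO_P$-submodule of $\Sym^m_{\sO_Q}\Omega^i_{Q/k}$, which by Lemma~\ref{lem:G-sheaf}(1) is contained in $\Sym^m_{K(P)}\Omega^i_{K(P)/k}$; hence it equals $\Sym^m_{\sO_Q}\Omega^i_{Q/k}\cap\Sym^m_{K(P)}\Omega^i_{K(P)/k}$, which by Lemma~\ref{lem:key inclusion in DVR case} applied to $\sO_P\into\sO_Q$ (with $D=(1-\tfrac1e)\{\m_P\}=\mathrm{Diff}_P(0)$) is precisely $\Sym^m_{\mathcal C}\Omega^i_{P/k}(\log\mathrm{Diff}_P(0))$, as desired.

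\textbf{Expected main difficulty.} The genuinely delicate point is (1): the regularity of $Y$ and $Q$ relies on the structure theory of plt surface singularity pairs --- namely that, being tame (as $\ell$ is prime to $p$), they arise as cyclic quotients of regular pairs --- which over an imperfect residue field requires the classification and geometric-reducedness results of \cite{Sato23} in addition to the classical picture. Granting (1), parts (2) and (3) become essentially formal: (2) is the ``reflexive sheaves agreeing in codimension one'' principle combined with Lemma~\ref{lem:G-sheaf}(2), and (3) combines Lemma~\ref{lem:G-sheaf}(1), Lemma~\ref{lem:key inclusion in DVR case}, and the standard adjunction computation of the different.
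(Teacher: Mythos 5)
Your overall strategy coincides with the paper's: the cover is the index-one (cyclic) cover of $K_X+P$, part (2) is ``both sides reflexive, compare in codimension one, apply Lemma~\ref{lem:G-sheaf}(2)'', and part (3) reduces to Lemma~\ref{lem:key inclusion in DVR case} after identifying $\mathrm{Diff}_P(0)=(1-\tfrac1e)\{\m_R\}$. The genuine gap is in your proof of (1). You justify the regularity of $Y$ and of $Q$ by asserting that, ``after an \'etale base change at $x_0$, the cover $\gamma$ is the standard cyclic quotient cover $\AA^2\to(\AA^2,\{x=0\})/\mu_\ell$'', citing the classification of plt surface pairs and, for the imperfect residue field case, \cite{Sato23}. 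This is exactly the kind of structural statement that is \emph{not} available in the needed generality: here the residue field $R/\m$ is typically imperfect (the lemma is applied to localizations at codimension-two points), and neither \cite{Kol13} nor \cite{Sato23} provides an \'etale-local description of tame plt pairs as cyclic quotients of $(\AA^2,\{x=0\})$ in that setting; indeed, proving that the index-one cover is a regular pair is essentially equivalent to such a description, so the appeal is close to circular. The paper avoids any classification: since $(Y,Q)$ is plt (\cite[Corollary 2.43(2)]{Kol13}), $Q$ is regular by \cite[Section 3.35]{Kol13}; and since $K_Y+Q=\Div(T)$ is Cartier, all discrepancies of $(Y,Q)$ are integers, hence $(Y,Q)$ is canonical and $Y$ is regular by \cite[Theorem 2.29(2)]{Kol13}. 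You should replace your classification argument by this direct discrepancy argument (or supply a proof of the quotient description you invoke).

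Parts (2) and (3) are essentially the paper's proofs and are fine modulo two points you assert without argument. First, in (3) you use $Q/G=P$ (equivalently $\sO_P=(\gamma_*\sO_Q)^G$) in order to apply Lemma~\ref{lem:G-sheaf}(1); this needs a short argument, e.g.\ the paper's: since $\ell$ is prime to $p$ the Reynolds operator makes $(\gamma_*-)^G$ exact, so applying $G$-invariants to $\gamma_*\sO_Y\onto\gamma_*\sO_Q$ and comparing with $\sO_X\onto\sO_P$ forces $\sO_P=(\gamma_*\sO_Q)^G$. Second, your identification $\mathrm{Diff}_P(0)=(1-\tfrac1e)\{\m_R\}$ via tame Riemann--Hurwitz for $Q\to P$ is a legitimate alternative to the paper's route (which uses \cite[Claim 3.35.1, Proposition 10.9(3), Theorem 3.36]{Kol13} to compute both the ramification index and the different as $\det(\Gamma)$), but you should verify tameness at the closed point, i.e.\ that the residue field extension of $Q\to P$ is separable, so that the different ideal has valuation exactly $e-1$; this is where the imperfect residue field could otherwise cause trouble.
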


\begin{proof}
    As for (1), we can take $Y$ as an index one cover, i.e.,  
    \[
    Y\coloneqq \Spec \big(\bigoplus_{n \geq0 } H^0(X, \sO_X(n(K_X+P)))T^n/(T^{\ell}/r-1)\big),
    \]
    where $\ell$ is the Cartier index of $K_X+P$ and $r$ is an element such that $\ell(K_X+P)=\Div(r)$.
    Since the function field $K(X)$ contains an root of unity $\zeta$, we have $K(Y)=K(X)(\sqrt[\ell]{r})$ is a Galois extension with Galois group $G=\Z/\ell \Z$.
    Noting that $(Y,Q)$ is plt \cite[Corollary 2.43 (2)]{Kol13}, it follows from \cite[Section 3.35]{Kol13} that $Q$ is regular.
    Since every discrepancy of $K_Y+Q=\Div(T)$ is an integer, 
    $(Y,Q)$ is canonical, and hence $Y$ is regular \cite[Theorem 2.29 (2)]{Kol13}. 
    Thus, we conclude (1).
    
    For (2), we first note that $\gamma$ is the quotient of $Y$ by the natural action of $G$.
    Since $Q$ is a $G$-invariant subscheme of $Y$, the sheaf $\Sym^m_{\mathcal{C}}\Omega^i_{Y/k}(\log cD)$ is a $G$-invariant subsheaf of the constant sheaf $\Sym^m_{K(Y)}\Omega^i_{K(Y)/k}$.
    Since both sheaves in the assertion are reflexive \cite[Lemma B.4]{GKKP}, it is enough to verify the equality in codimension one.
    Then the assertion follows from Lemma \ref{lem:G-sheaf} (2).
    
    We finally prove (3).
    Since $Q$ is a $G$-invariant subscheme of $Y$ and $\gamma$ induces a morphism $\gamma: Q \to P$, we have a natural inclusion $\sO_P \into (\gamma_*\sO_Q)^G$.
    Noting that the order $|G|=\ell$ is not divisible by $p$, by using the Raynolds operator, the functor $(\gamma_* -)^G$ is exact.
    It then follows from the commutative diagram
    \[
\begin{tikzcd}
\sO_X \arrow[r, ->>] \arrow[d, equal] & \sO_P \arrow[d, hookrightarrow]\\
 (\gamma_* \sO_Y)^G \arrow[r, ->>] & (\gamma_*\sO_Q)^G 
\end{tikzcd}
    \]
    that we have the equality $\sO_P=(\gamma_* \sO_Q)^G$.
    Applying Lemma \ref{lem:G-sheaf} (1), we have the equality
    \begin{align*}
    (\gamma_* \Sym^m_{\sO_Q}\Omega^i_{Q/k})^G &= \gamma_* \Sym^m_{\sO_Q}\Omega^i_{Q/k} \cap (\Sym^m_{K(Y)}\Omega^i_{K(Y)/k})^G\\
    & = \gamma_* \Sym^m_{\sO_Q}\Omega^i_{Q/k} \cap \Sym^m_{K(X)}\Omega^i_{K(X)/k}.
    \end{align*}
    
    Since $(X,P)$ is plt, we have $P$ is regular \cite[Section 3.35]{Kol13}, and in particular, we can write $P=\Spec R$ and $Q=\Spec S$ for some discrete valuation rings $R$ and $S$. 
    Since $\gamma: Y \to X$ is \'etale in codimension one, we obtain
    \[
    p \not| \; [K(Y):K(X)] = [K(Q):K(P)].
    \]
    by applying \cite[\href{https://stacks.math.columbia.edu/tag/09E8}{Tag 09E8}]{stacks-project} to $\sO_{X,\eta_P} \to \sO_{Y, \eta_Q}$, and in particular,
    $R\to S$ is finite of degree prime to $p$.
    By applying \cite[\href{https://stacks.math.columbia.edu/tag/09E8}{Tag 09E8}]{stacks-project} to $R \to S$, the ramification index $e$ of $S$ over $R$ is equal to $[K(Y):K(X)]=m$.
    It follows from \cite[Claim 3.35.1 and Proposition 10.9 (3)]{Kol13} that we have $m = \det (\Gamma)$, where $\Gamma$ is the intersection matrix of a log minimal resolution of $(X,P)$.
    Combining this with the adjunction \cite[Theorem 3.36]{Kol13},
    we have 
    \[
    \mathrm{Diff}_P(0) = (1-\frac{1}{e})\{\m_R\}.
    \]
    Then the assertion (3) follows from Lemma \ref{lem:key inclusion in DVR case}.
\end{proof}

\begin{defn}
A pair $(X, B)$ over a field $k$ of characteristic $p>0$ is said to be \textit{tamely dlt} if $(X,B)$ is dlt, $B$ is reduced, and the Cartier index of $K_X+B$ is not divisible by $p$.
\end{defn}

\begin{thm}[\textup{cf.~\cite[Theorems 1.4 and 1.5]{Gra}}]\label{thm:reside and restr ex seq}
    Let $(X,P+D)$ be a two-dimensional tamely dlt pair over a perfect field $k$ of characteristic $p>0$.
    We assume that $k$ contains an $\ell$-th root of unity, where $\ell$ is the Cartier index of $K_X+P+D$.
    Let $i>0$ be a positive integer and $D_P$ be the different of $D$ on $P$.
    \begin{enumerate}[label=\textup{(\arabic*)}]
        \item There exists a short exact sequence
            \[
            0 \to \Omega^{[i]}_{X/k}(\log\,  D  )\to \Omega^{[i]}_{X/k}(\log D+P ))  \xrightarrow{\res^1_P} \Omega^{i-i}_{P/k}(\log \lfloor D_P\rfloor) \to 0\]
            and a surjective morphism 
            \[
            \res^m_{P}\colon \Sym^{[m]}_{\mathcal{C}}\Omega_{X/k}^{[i]}(\log D+P)\to \Sym^{m}_{\mathcal{C}} \Omega^{i-1}_{P/k}(\log D_P)
            \] 
            for each $m > 0$ that coincides with the map \eqref{map:res} at the generic point of $P$.
       \item There exists a short exact sequence
            \[
            0 \to (\Omega^{[i]}_{X/k}(\log D+P )\otimes\sO_X(-P))^{**} \to \Omega^{[i]}_{X/k}(\log  D ) \xrightarrow{\restr^1_P} \Omega^{i}_{P/k}(\log \lfloor D_P \rfloor) \to 0
            \]
            and a surjective morphism 
            \[
            \mathrm{restr}^{m}_{P}\colon \Sym^{[m]}_{\mathcal{C}} \Omega^{[i]}_{X/k}(\log D)\to \Sym^{m}_{\mathcal{C}} \Omega^{i}_{P/k}(\log D_P)
            \]
            for each $m>0$ that coincides with the map \eqref{map:restr} at the generic point of $P$.
\end{enumerate}
\end{thm}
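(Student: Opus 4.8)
The plan is to localize, use reflexivity of all the sheaves in the statement to reduce both assertions to surjectivity at finitely many closed points, and there deduce everything from the classical residue and restriction sequences on a quasi-\'etale cyclic cover by passing to invariants under a group of order prime to $p$. Let $U\subseteq X$ be the locus where $(X,P+D)$ has geometrically snc support, with inclusion $j\colon U\hookrightarrow X$. Since $k$ is perfect, a scheme essentially of finite type over $k$ is regular if and only if it is geometrically regular over $k$, so $X\setminus U$ is the finite set consisting of $\operatorname{Sing}X$ together with the non-snc points of $(X,\lfloor P+D\rfloor)$. On $U$ the residue and restriction maps and sequences \eqref{eq:res seq}, \eqref{eq:restr seq}, \eqref{map:res}, \eqref{map:restr} are available; all sheaves occurring in the statement are reflexive (being $j_*$ of their restrictions to $U$, or pushforwards from a big open of $P$), and $j_*$ is left exact, so the left-exact parts of the two $i$-form sequences and the maps $\res^m_P$, $\restr^m_P$ are defined on all of $X$, agree with \eqref{map:res}, \eqref{map:restr} at the generic point of $P$, and the only content left is surjectivity (equivalently, right-exactness of the $i$-form sequences) at the points of $X\setminus U$. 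Fixing such a point and localizing, we may assume $X=\Spec R$ with $(R,\mathfrak{m})$ a two-dimensional normal local domain, and it suffices to prove surjectivity at $\mathfrak{m}$.

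If $\mathfrak{m}\notin P$ the target sheaves on $P$ vanish near $\mathfrak{m}$, so there is nothing to prove. If $\mathfrak{m}$ lies on $P$ and on some component of $D$, then $\lfloor P+D\rfloor$ has at least two components through $\mathfrak{m}$, and dlt-ness of $(X,P+D)$ forces $(X,P+D)$ to be snc at $\mathfrak{m}$, contradicting $\mathfrak{m}\in X\setminus U$. Hence $\mathfrak{m}\in P$ and $D=0$ near $\mathfrak{m}$; in particular $D_P=\mathrm{Diff}_P(0)$ near $\mathfrak{m}$, so only the pair $(X,P)$ is relevant. Now $(X,P)$ is again dlt (removing the effective $D$ only raises log discrepancies), and since $\lfloor P\rfloor=P$ is a single normal prime divisor, $(X,P)$ is plt; its Cartier index at $\mathfrak{m}$ is $\ell$, which is prime to $p$, and $k$ contains an $\ell$-th root of unity. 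Thus Lemma \ref{lem:quasi-etale cover} supplies a finite quasi-\'etale cyclic Galois cover $\gamma\colon Y\to X$ with group $G=\Z/\ell\Z$, $p\nmid|G|$, such that $Y$ and $Q\coloneqq(\gamma^{-1}(P))_{\mathrm{red}}$ are regular, so that $(Y,Q)$ has geometrically snc support over $k$.

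On $Y$ we invoke the classical sequences \eqref{eq:res seq}, \eqref{map:res} and \eqref{eq:restr seq}, \eqref{map:restr} for the snc pair $(Y,Q)$ with $D_0=Q$, e.g.
\[
0 \to \Omega^i_{Y/k} \to \Omega^i_{Y/k}(\log Q) \xrightarrow{\res^1_Q} \Omega^{i-1}_{Q/k} \to 0, \qquad \res^m_Q\colon \Sym^m_{\mathcal{C}}\Omega^i_{Y/k}(\log Q) \onto \Sym^m_{\mathcal{C}}\Omega^{i-1}_{Q/k},
\]
and the restriction analogues. Since $p\nmid|G|$, the Reynolds operator makes $(\gamma_*-)^G$ exact, so applying it produces short exact sequences and surjections of $\sO_X$-modules. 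It remains to identify the invariant sheaves: $(\gamma_*\Sym^m_{\mathcal{C}}\Omega^i_{Y/k}(\log cQ))^G=\Sym^{[m]}_{\mathcal{C}}\Omega^{[i]}_{X/k}(\log cP)$ by Lemma \ref{lem:quasi-etale cover}(2) (which for $m=1$ and $c=1$ reads $\Omega^{[i]}_{X/k}(\log P)$), and $(\gamma_*\Sym^m_{\sO_Q}\Omega^{i-1}_{Q/k})^G=\Sym^m_{\mathcal{C}}\Omega^{i-1}_{P/k}(\log\mathrm{Diff}_P(0))$ by Lemma \ref{lem:quasi-etale cover}(3); for the kernels one uses the same identifications with the appropriate twist, the $(-P)$-twist requiring a reflexive hull exactly as in the statement. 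Matching terms recovers, near $\mathfrak{m}$, precisely the sequences and surjections of the theorem (with $D=0$ there), and glueing with the classical picture on $U$ completes the proof; the compatibility with \eqref{map:res}, \eqref{map:restr} at the generic point of $P$ is clear since $\gamma$ is \'etale there.

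The step I expect to be most delicate is the reduction in the previous two paragraphs: verifying that $X\setminus U$ is finite and that at each of its points the boundary $D$ either disappears or $X$ is already regular, so that Lemma \ref{lem:quasi-etale cover}---stated for $(X,P)$ alone---applies without having to carry $D$ through the cover. This rests on the surface dlt fact that two reduced boundary components meet only at snc points, together with the coincidence of regularity and geometric regularity over the perfect field $k$. Once this is in place the invariants argument is essentially a transcription of the proof of Lemma \ref{lem:quasi-etale cover}, and the residual verifications (exactness of $(\gamma_*-)^G$ via the Reynolds operator, the identification $D_P=\mathrm{Diff}_P(0)$ near $\mathfrak{m}$, and the behaviour of the $(-P)$-twist under invariants) are routine.
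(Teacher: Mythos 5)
Your proposal is correct and follows essentially the same route as the paper: reduce via the snc locus and the dlt classification to the local case where $D=0$ and $(X,P)$ is plt, pass to the quasi-\'etale cyclic cover of Lemma \ref{lem:quasi-etale cover}, use exactness of $(\gamma_*-)^G$ (as $p\nmid\ell$) on the classical residue/restriction sequences for $(Y,Q)$, and identify the invariant sheaves by Lemma \ref{lem:quasi-etale cover}(2),(3). The only difference is presentational: you spell out the reduction to $D=0$ and the finiteness of the non-snc locus, which the paper compresses into one line, and the paper records the needed compatibility of $\res^m_P$ with $\gamma_*\res^m_Q$ via explicit commutative diagrams of $G$-sheaves.
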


\begin{proof}
     We only show (1) as the other is similar.
     Let $U \subseteq X$ be the maximal open subset where $(X,D+P)$ is snc.
     Then the push-forward of the residual map
     \[
     \res_{P \cap U}^m \colon \Sym^m_{\mathcal{C}}\Omega^i_{U/k}(\log (D+P)|_U)\onto \Sym^m_{\mathcal{C}} \Omega^{i-1}_{P \cap U/k}(\log D|_{U \cap P}),
     \]
     defines a morphism
     \[
     \res_{P}^m \colon \Sym^{[m]}_{\mathcal{C}}\Omega^{[i]}_{X/k}(\log D+P)\to \Sym^m_{K(P)/k} \Omega^{i-1}_{K(P)/k},
     \]
     where $\Sym^m_{K(P)/k} \Omega^{i-1}_{K(P)/k}$ is the constant sheaf on $P$.
     It is enough to show that the image of $\res_P^m$ is $\Sym^{m}_{\mathcal{C}} \Omega^{i-1}_{P/k}(\log D_P)$.
     
     We may assume that $X$ is the spectrum of a local ring and $P \neq 0$.
     Since the assertion holds on the snc locus $U$, we may also assume that $D=0$.
     Take $\gamma \colon Y \to X$ and $Q=\gamma^{-1} (P)_{\mathrm{red}}$ as in Lemma \ref{lem:quasi-etale cover}.
     We note that the following diagram commutes:
    \[
    \begin{tikzcd}
    \Sym^{[m]}_{\sO_X} \Omega_{X/k}^{[i]}(\log P)  \arrow[dd, "\res_P^m"] \arrow[r, hookrightarrow]  & \gamma_* \Sym^m_{\sO_Y} \Omega_{Y/k}^i(\log Q)  \arrow[d, ->>, "\gamma_*\res_Q^m"] \\
    & \gamma_* \Sym^m_{\sO_Q} \Omega_{Q/k}^{i-1} \arrow[d, hookrightarrow] \\
    \Sym^m_{K(P)} \Omega_{K(P)/k}^{i-1} \arrow[r, hookrightarrow] & \gamma_* \Sym^m_{K(Q)} \Omega_{K(Q)/k}^{i-1} .
    \end{tikzcd}
    \]
    Since $\res_P^m$ is a morphism of $G$-sheaves and the functor $(\gamma_* - )^G$ is exact, we also have a commutative diagram
    \[
    \begin{tikzcd}
    (\gamma_* \Sym^m_{\sO_Y} \Omega_{Y/k}^i(\log Q))^G  \arrow[d, ->>, "(\gamma_* \res_Q^m)^G"] \arrow[r, hookrightarrow]  & \gamma_* \Sym^m_{\sO_Y} \Omega_{Y/k}^i(\log Q)  \arrow[d, ->>, "\gamma_*\res_Q^m"] \\
    (\gamma_* \Sym^m_{\sO_Q} \Omega_{Q/k}^{i-1})^G \arrow[r, hookrightarrow] \arrow[d, hookrightarrow] & \gamma_* \Sym^m_{\sO_Q} \Omega_{Q/k}^{i-1} \arrow[d, hookrightarrow] \\
    (\gamma_* \Sym^m_{K(Q)} \Omega_{K(Q)/k}^{i-1})^G \arrow[r, hookrightarrow] & \gamma_* \Sym^m_{K(Q)} \Omega_{K(Q)/k}^{i-1} .
    \end{tikzcd}
    \]
    Combining this with Lemma \ref{lem:quasi-etale cover}, we have
    \[
    \Im(\res^m_P) = (\gamma_* \Sym^m_{\sO_Q} \Omega_{Q/k}^{i-1})^G = \Sym^m_{\mathcal{C}} \Omega^{i-1}_{P/k}(\log D_P),
    \]
    as desired.
\end{proof}

\subsection{Tame decomposition}
In this subsection, we give a sufficient condition for a two-dimensional pair to satisfy the logarithmic extension theorem (Lemma \ref{lem:tame decomposition implies LET}).

\begin{prop}\label{prop:LET for tame dlt pair}
    Let $(X=\Spec R,B)$ be a two-dimensional pair over a perfect field $k$ of characteristic $p>0$ with $(R,\m)$ local and $B$ reduced.
    Suppose that $f \colon Y \to X$ is a proper birational morphism from a normal irreducible scheme $Y$.
    Set $B_Y=f_{*}^{-1}B+\Exc(f)$.
    Suppose that 
    \begin{enumerate}
        \item[\textup{(1)}] $(Y,B_Y)$ is tamely dlt,
        \item[\textup{(2)}] $-(K_Y+B_Y)$ is $f$-nef, 
        \item[\textup{(3)}] every stratum of $B_Y$ is geometrically regular over $\kappa \coloneqq R/\m$, and
        \item[\textup{(4)}] the residue field at every point in the non-snc locus of $(Y, B_Y)$ is separable over $\kappa$.
    \end{enumerate}
    Then a natural restriction map
    \[
    f_{*}\Omega^{[i]}_{Y/k}(\log B_Y)\hookrightarrow\Omega^{[i]}_{X/k}(\log B)
    \]
    is surjective for all $i\geq 0$.
\end{prop}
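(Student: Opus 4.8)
The plan is to deduce the surjectivity from the residue and restriction exact sequences of Theorem~\ref{thm:reside and restr ex seq} by an inductive d\'evissage along the $f$-exceptional curves, reducing at each stage to a Riemann--Roch-type computation for logarithmic $\mathcal{C}$-differentials on a single proper curve over $\kappa$ and to a vanishing statement on the exceptional fibre that replaces the Steenbrink-type vanishing used in characteristic zero. As a first step I would reduce to the case $X=\Spec R$ with $(R,\m)$ local and $i\ge 1$, the case $i=0$ being the identity $f_*\sO_Y=\sO_X$ (valid since $X$ is normal). Since $\dim X=2$ and $f$ is proper birational, $f$ is an isomorphism over $X\setminus\{\m\}$, so the restriction map is an isomorphism in codimension $\le 1$; being moreover injective with reflexive target $\Omega^{[i]}_{X/k}(\log B)$, its surjectivity is equivalent to the reflexivity of $f_*\Omega^{[i]}_{Y/k}(\log B_Y)$, and as this sheaf is torsion-free the only thing left to check is that its local cohomology $H^1_{\m}$ vanishes. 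I would also pass to a pro-\'etale (hence faithfully flat) extension of the perfect field $k$ containing all roots of unity of order prime to $p$; this is harmless for the conclusion and makes Theorem~\ref{thm:reside and restr ex seq}, whose hypotheses require such a root of unity, applicable to every tamely dlt pair occurring below.

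Next I would peel the $f$-exceptional primes off the log structure one at a time. Fixing an $f$-exceptional prime $E_1\subseteq B_Y$ and writing $D_{E_1}$ for the different of $B_Y-E_1$ on $E_1$, Theorem~\ref{thm:reside and restr ex seq} applied to the tamely dlt pair $(Y,(B_Y-E_1)+E_1)$ yields the residue sequence
\[
0\to\Omega^{[i]}_{Y/k}(\log(B_Y-E_1))\to\Omega^{[i]}_{Y/k}(\log B_Y)\xrightarrow{\ \res^1_{E_1}\ }\Omega^{i-1}_{E_1/k}(\log\lfloor D_{E_1}\rfloor)\to 0
\]
together with the surjection $\res^m_{E_1}$ on $\mathcal{C}$-symmetric powers, and part~(2) supplies the parallel restriction sequence carrying the twist by $E_1$. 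Pushing these forward by $f$, the surjectivity over $X$ is reduced to three ingredients: (a) an analogous surjectivity for $\Omega^{[i]}_{Y/k}(\log(B_Y-E_1))$, handled by the same machinery on a birational model of $X$ with one fewer exceptional prime, using $f$-nefness of $-(K_Y+B_Y)$ together with $\Q$-factoriality and negative-definiteness of the exceptional intersection matrix to organize the order of removal and to produce such a model while preserving the hypotheses; (b) the surjectivity of the $f$-pushforward of the residue map onto $H^0\bigl(E_1,\Omega^{i-1}_{E_1/k}(\log\lfloor D_{E_1}\rfloor)\bigr)$; and (c) the vanishing of the part of the obstruction group $R^1f_*\Omega^{[i]}_{Y/k}(\log(B_Y-E_1))$ that maps into the relevant cokernel.

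For (b) and (c) one works on the curve $E_1$, which is proper over $\kappa$ because it is $f$-exceptional. Adjunction for $(Y,B_Y)$ together with $f$-nefness of $-(K_Y+B_Y)$ forces $p_a(E_1)\le 1$ and bounds below the degrees of the line bundles appearing in the sequences. Using Remark~\ref{rem:C-differential}(3), which computes $\Sym^{[m]}_{\mathcal{C}}\Omega^{[\bullet]}$ on a proper curve, the geometric reducedness of $E_1$ and of $D_{E_1}$ furnished by hypothesis~(3), and the separability at the non-snc points furnished by hypothesis~(4) (so that the tame quasi-\'etale covers of Lemma~\ref{lem:quasi-etale cover} behave as over a perfect field), both statements become Riemann--Roch computations on $E_1$. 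The extra summands $\Sym^n_{\kappa}\Omega^i_{\kappa/k}$ produced by the imperfection of $\kappa$ (Lemma~\ref{lem:change of fields}) are carried through the induced filtration and dealt with degree by degree, each reducing to the same genus $\le 1$ computation.

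I expect the genuine obstacle to be (c): establishing the required vanishing along the exceptional fibre in positive characteristic, where Kodaira--Steenbrink-type vanishing is unavailable, so that one must argue through the negative-definite intersection matrix and, when $p$ is small, through the explicit dual-graph classification of tamely dlt surface singularities (Theorem~\ref{thm:appendixmain}), all while propagating hypotheses~(3) and~(4) through each contraction and each residue/restriction sequence. The borderline case $p_a(E_1)=1$ --- a contracted elliptic curve, or a cycle of rational curves contracted to a cusp --- is the most delicate, since there the obstruction group need not vanish and one must instead show that the connecting homomorphism into the cokernel is zero; this extra subtlety, together with the imperfect-residue-field bookkeeping, is precisely what separates the argument from Graf's proof of \cite[Theorem~6.1]{Gra}.
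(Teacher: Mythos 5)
Your proposal assembles the right ingredients (the residue/restriction sequences of Theorem \ref{thm:reside and restr ex seq}, the curve computation of Remark \ref{rem:C-differential}(3), the filtration of Lemma \ref{lem:change of fields} for imperfect $\kappa$), but the architecture has a genuine gap, and you flag it yourself in step (c). Your d\'evissage pushes the residue sequence forward by $f$ and then needs two things: the analogous surjectivity for $\Omega^{[i]}_{Y/k}(\log(B_Y-E_1))$, and control of the image of $f_*$ of the residue map inside the cokernel governed by $R^1f_*\Omega^{[i]}_{Y/k}(\log(B_Y-E_1))$. The first is a \emph{stronger} assertion than the one being proved: deleting an exceptional prime from the boundary moves you toward the regular extension theorem, which fails in positive characteristic even for toric, strongly $F$-regular singularities; and the proposed remedy of passing to ``a birational model with one fewer exceptional prime while preserving the hypotheses'' is not justified, since tame dlt-ness and $f$-nefness of $-(K+B)$ are not preserved by contracting a prime and dropping it from the log structure --- arranging such contractions is exactly the content of the tame decompositions (Definition \ref{def:tame decomposition}, Theorem \ref{mainthm:2-dim}), which form a separate layer of the paper and are not available inside this proposition. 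The second ingredient is a relative $R^1$-vanishing along the exceptional fibre, i.e.\ precisely the Steenbrink-type input that is unavailable in characteristic $p$; you offer no substitute and concede in your last paragraph that this is an unresolved obstacle, so the proof is not complete as written.

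The paper's argument avoids $R^1f_*$ and any induction on models altogether. One fixes $s\in H^0(X,\Omega^{[i]}_{X/k}(\log B))$, lifts it to $s_Y\in H^0(Y,\Omega^{[i]}_{Y/k}(\log B_Y)\otimes\sO_Y(F))$ for some effective $f$-exceptional $F$, chooses a component $E_1\subseteq \Supp F$ with $F\cdot E_1<0$, and shows that first the residue and then the restriction of $s_Y$ along $E_1$ vanish, so that $F$ may be replaced by $F-E_1$; iterating gives $F=0$. The vanishing of these maps is detected after passing to symmetric $\mathcal{C}$-powers via $\res^m_{E_1}$ and $\restr^m_{E_1}$, and the only cohomological input is the $H^0$-statement $H^0\bigl(E_1,\Sym^m_{\mathcal C}\Omega^{\bullet}_{E_1/k}(\log B_{E_1})\otimes\sO_{E_1}(-A)\bigr)=0$ for $A$ ample, which --- after the filtration of Lemma \ref{lem:change of fields} strips off the $\Sym^n_\kappa\Omega^i_{\kappa/k}$ factors --- is a pure degree computation using Remark \ref{rem:C-differential}(3) together with the nefness of $-(K_{E_1}+B_{E_1})$ coming from adjunction and hypothesis (2). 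In particular, no case analysis on the genus of $E_1$, the dual graph, or small $p$ is needed inside this proposition: replacing a relative vanishing theorem by an $H^0$ degree bound for symmetric $\mathcal{C}$-differentials on the exceptional curve is the key idea missing from your outline.
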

\begin{proof}
After replacing $k$ by its finite separable extension, we may assume that $k$ contains an $\ell$-th root of unity, where $\ell$ is the Cartier index of $K_Y+B_Y$ (cf.~\cite[Proposition 7.4]{Gra}).
We also note that $Y$ is $\Q$-factorial since $(Y, B_Y)$ is dlt \cite[Corollary 4.11]{Tanaka(exc)}.

When $i=0$, the desired surjectivity is nothing but $f_{*}\sO_Y=\sO_X$.
Fix $i\geq 1$. We show that a natural restriction map
\[
H^0(Y, \Omega_{Y/k}^{[i]}(\log B_Y))\hookrightarrow H^0(X, \Omega_{X/k}^{[i]}(\log B))
\]
is surjective.

We take $s\in H^0(X, \Omega_{X/k}^{[i]}(\log B))$. 
Then we can take an effective $f$-exceptional divisor $F$ and a section
$s_Y\in H^0(Y, \Omega_{Y/k}^{[i]}(\log B_Y)\otimes\sO_Y(F))$ that maps to $s$ by restriction.
Our aim is to show that we can take $F=0$.

The section $s_Y$ can be considered as a map
\[
s_Y\colon \sO_X(-F) \hookrightarrow \Omega^{[i]}_{Y/k}(\log B_Y).
\]
Take a prime $f$-exceptional divisor $E_1$ such that $E_1\subset \Supp(F)$ and $F\cdot E_1<0$.
In what follows,
we prove that $s_Y$ factors though an injective map
\[
\sO_{Y}(-F+E_1)\hookrightarrow \Omega^{[1]}_{Y/k}(\log\,B_Y).
\]

Since $(Y, B_Y)$ is tamely dlt, we can use Theorem \ref{thm:reside and restr ex seq} (1) to obtain the following commutative diagram 
\begin{equation*}
\xymatrix{ & & \sO_{Y}(-F) \ar@{.>}[ld] \ar[d]^-{s_Y} \ar[rd]^-{t} &&\\
                 0\ar[r] &\Omega^{[i]}_{Y/k}(\log B_Y-E_1)\ar[r]   & \Omega^{[i]}_{Y/k}(\log\,B_Y) \ar[r]^-{\res_{E_1}}  & \Omega^{i-1}_{E_1/k}(\log \lfloor B_{E_1}\rfloor) \to 0,}
\end{equation*}
and a morphism 
\[
\res^m_{E_1}\colon \Sym^{[m]}_{\mathcal{C}}\Omega_{Y/k}^{[i]}(\log\,B_Y)\to \Sym^{m}_{\mathcal{C}}\Omega^{i-1}_{E_1/k}(\log B_{E_1})
\]
for every integer $m>0$, where $B_{E_1}$ is the different of $B_Y-E_1$ on $E_1$.

We show that $t$ is zero. Suppose by contradiction that $t$ is non-zero. As $\Im(t)\subset \Omega^{i-1}_{E_1/k}(\log\,\lfloor B_{E_1}\rfloor)$ is a torsion-free $\sO_{E_1}$-module, it follows that $t$ is non-zero at the generic point $\eta_1$ of $E_1$ and 
\[
\Sym^{m}(t)\colon \Sym^m_{\sO_Y} \sO_Y(-F) \to 
\Sym^m_{\sO_{E_1}}\Omega^{i-1}_{E_1/k}(\log \lfloor B_{E_1}\rfloor)
\]
is also non-zero at $\eta_1$. 
By the construction of $\res^m_{E_1}$, the map $\Sym^m(\res_{E_1})$ coincides with $\res^m_{E_1}$ at $\eta_1$.
Moreover, $\Sym^{m}(s_Y)$ coincides with $\Sym^{[m]}(s_Y)\coloneqq (\Sym^m(s_Y))^{**}$ at $\eta_1$ as $Y$ is smooth at $\eta_1$.
Thus, the composition $\res^m_{E_1}\circ\Sym^{[m]}(s_Y)$ coincides with $\Sym^m(t)=\Sym^m(\res_{E_1})\circ\Sym^m(s_Y)$, and in particular, it is non-zero at $\eta_1$. 
Fix $m>0$ such that $mD_Y$ is Cartier. 
By restricting $\res^m_{E_1}\circ\Sym^{[m]}(s_Y)$ to $E_1$, we obtain an injective map \[\sO_{E_1}(-mF)\hookrightarrow \Sym^{m}_{\mathcal{C}}\Omega^{i-1}_{E_1/k}(\log B_{E_1}).\]
Since $(-mF)\cdot E_1=\deg(\sO_{E_1}(-mF))>0$, this contradicts the following claim.

\begin{cl}
    $H^0(E_1, \Sym^{m}_{\mathcal{C}} \Omega^{i}_{E_1/k}(\log  B_{E_1} )\otimes\sO_{E_1}(-G))=0$ for all $i\geq 0$, $m\geq 0$, and an ample Cartier divisor $G$ on $E_1$.
\end{cl}

\begin{proof}
    We set $E : =E_1$ and $D : =B_{E_1}$.
    Since every point $x \in \Supp(D)$ is either a zero dimensional stratum of $B_Y$ or a point contained in the non-snc locus of $(Y,B_Y)$, it follows from the assumption that the residue field $\kappa(x)$ is separable over $\kappa$.
    Therefore, $(E,D)$ is a pair over $\kappa$ with geometrically snc support.
    Let $F^{p,l}_{i,m}$ be a locally free $\sO_{E}$-submodule of $\Sym^m_{\mathcal{C}} \Omega^i_{E/k}(\log D)$ in Lemma \ref{lem:change of fields}.

    We fix $i \ge 0$ and we verify the vanishing 
    \begin{align}\label{eq:vanishing for C-diff}
    H^0(E, F^{p,l}_{i,m}(-G))=0.
    \end{align}
    for every $p,m,l \ge 0$ and for every ample divisor $G$ on $E$.
    
    \textbf{Case 1}:
    If $p \ge i+1$, then this follows from the fact that 
    \[
    F^{p,l}_{i,m}=\left\{ \begin{array}{ll} \sO_E & \textup{(if $m=l=0$)} \\ 0 & \textup{(otherwise)} \end{array}\right.
    \]
    (Lemma \ref{lem:change of fields} (iii)).

    \textbf{Case 2}:
    We next consider the case of $p =i$.
    By Lemma \ref{lem:change of fields} (ii), we have the short exact sequence
    \[
    0 \to F^{p,l+1}_{i,m} \to F^{p,l}_{i,m} \to F^{p+1,0}_{i,l} \otimes_{\kappa} \Sym^{m-l}_{\mathcal{C}} (\Omega^p_{\kappa/k} \otimes_{\kappa} \Omega^{i-p}_{E/\kappa}(\log D)) \to 0.
    \]   
    By Remark \ref{rem:C-differential} (3), 
    the module $\Sym^{m-l}_{\mathcal{C}}(\Omega^p_{\kappa/k} \otimes_{\kappa} \Omega^{i-p}_{E/\kappa}(\log D))$ is a free $\sO_E$-module.
    Combining this with Case 1, we have 
    \[
    H^0(F^{p,0}_{i,m}(-G)) = H^0(F^{p,1}_{i,m}(-G))= \cdots =H^0(F^{p,m+1}_{i,m}(-G)).
    \]
    Then these global sections are zero since $F^{p,m+1}_{i,m}=0$ by Lemma \ref{lem:change of fields} (iii).

    \textbf{Case 3}:
    In the case of $p=i-1$, the equation \eqref{eq:vanishing for C-diff} follows from the similar argument as in Case 2, by using
    \[
    \Sym^{m-l}_{\mathcal{C}}(\Omega^p_{\kappa/k} \otimes_{\kappa} \Omega^{i-p}_{E/\kappa}(\log D)) \cong \Sym^{m-l}_{\kappa} \Omega^p_{\kappa/k} \otimes_{\kappa} \sO_E(\lfloor (m-l)(K_E+D) \rfloor)
    \]
    (Remark \ref{rem:C-differential} (3)) and the anti-nefness of $K_E+D$.

    \textbf{Case 4}:
    We finally consider the case of $p \le i-2$.
    By Remark \ref{rem:C-differential} (3) again, we have
    \[
    \Sym^{m-l}_{\mathcal{C}}(\Omega^p_{\kappa/k} \otimes_{\kappa} \Omega^{i-p}_{E/\kappa}(\log D)) \cong \left\{ \begin{array}{ll} \sO_E & \textup{(if $m=l$) } \\
    0 & \textup{(otherwise) } 
    \end{array}\right..
    \]
    Combining this with Lemma \ref{lem:change of fields} (i) and (ii), we have
    \[
    F^{p,0}_{i,m}= F^{p,1}_{i,m} = \dots =F^{p,m}_{i,m} \cong F^{p+1,0}_{i,l}.
    \]
    Therefore, the desired vanishing follows from induction on $p$.
    \end{proof}

Therefore, $t$ is zero and the morphism $s_Y$ factors through \[\sO_{Y}(-F) \to \Omega^{[1]}_{Y/k}(\log\,B_Y-E_1).\]
Then, by Theorem \ref{thm:reside and restr ex seq} (2), we obtain the following commutative diagram
\begin{small}
\begin{equation*}
\xymatrix{  & \sO_{Y}(-F) \ar@{.>}[ld] \ar[d]^{s_Y} \ar[rd]^{v} &\\
                 0 \to (\Omega^{[i]}_{Y/k}(\log\,B_Y)\otimes\sO_Y(-E_1))^{**} \ar[r]   & \Omega^{[i]}_{Y/k}(\log\,B_Y-E_1) \ar[r]^-{\mathrm{restr_{E_1}}}  & \Omega^{[i]}_{E_1/k}(\log\,\lfloor B_{E_1} \rfloor) \to 0,}
\end{equation*}
\end{small}
and a morphism 
\[
\mathrm{restr}^{m}_{E_1}\colon \Sym^{[m]}_{\mathcal{C}}\Omega^{[i]}_{Y/k}(\log\,(B_Y-E_1))\to \Sym^{[m]}_{\mathcal{C}}\Omega^{[i]}_{E_1/k}(\log B_{E_1} )
\]
that coincides with $\Sym^{m}(\mathrm{restr_{E_1}})$ at the generic point $E_1$.

Therefore an argument similar to above shows that $v=0$ and $s_Y$ factors through $\sO_{Y}(-F+E_1) \hookrightarrow \Omega^{[1]}_{Y/k}(\log B_Y)$.
By replacing $-F+E_1$ with $-F$, and repeating the above procedure, we can prove that we can take $F=0$.
\end{proof}

\begin{defn}\label{def:tame decomposition}
    Let $(X=\Spec R,B)$ be a two-dimensional pair over a perfect field $k$ of characteristic $p>0$ with $(R,\m)$ local and $B$ reduced.
    Suppose that $f\colon Y \to X$ is log resolution and we set $B_Y \coloneqq f_{*}^{-1}B+\Exc(f)$.
    \textit{A tame decomposition of $f$} is a sequence
    \[
    \begin{tikzcd}[column sep=1cm]
    Y=Y_0 \arrow[rrrr,bend right, "f"] \arrow[r, "\phi_0"] & Y_1 \arrow[r, "\phi_1"] & \cdots \arrow[r, "\phi_{r-2}"] & Y_{r-1} \arrow[r, "\phi_{r-1}"] & Y_r =X
    \end{tikzcd}
    \]
     of proper birational morphisms
    such that for any $0 \leq i \leq r-1$, setting $B_{Y_i}\coloneqq (\phi_{i-1}\circ\cdots\circ\phi_0)_{*}B_Y$, the following properties hold:
    \begin{enumerate}[label=\textup{(\arabic*)}]
    \item every stratum of $B_Y$ is smooth over $R/\m$,
    \item the pair $(Y_i, B_{Y_i})$ is tamely dlt for every $i$, and 
    \item $-(K_{Y_i} +B_{Y_i})$ is nef over $Y_{i+1}$ for every $i$.
    \end{enumerate}
\end{defn}

\begin{lem}\label{lem:tame decomposition implies LET}
    Let $(X,B)$ be as in Definition \ref{def:tame decomposition}.
    If there exists a log resolution $f\colon Y \to X$ which admits a tame decomposition, then $X$ satisfies the logarithmic extension theorem for differential forms of any degree.
\end{lem}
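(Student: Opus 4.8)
The plan is to peel the tame decomposition apart one morphism at a time and apply Proposition \ref{prop:LET for tame dlt pair} to each step. Write the tame decomposition as $Y = Y_0 \xrightarrow{\phi_0} Y_1 \xrightarrow{\phi_1} \cdots \xrightarrow{\phi_{r-1}} Y_r = X$ and set $g_i = \phi_i \circ \cdots \circ \phi_0 \colon Y_0 \to Y_{i+1}$, so that $B_{Y_{i+1}} = (g_i)_* B_Y$ and $g_{r-1} = f$. I would first reduce the logarithmic extension theorem for $X$ to the statement that the restriction map $f_*\Omega^{[j]}_{Y/k}(\log B_Y)\to \Omega^{[j]}_{X/k}(\log B)$ is surjective for all $j \ge 0$: given an arbitrary proper birational $g'\colon Y' \to X$ one passes to a common log resolution dominating both $Y$ and $Y'$ and uses that smooth snc pairs satisfy the logarithmic extension theorem together with functoriality of logarithmic forms under birational morphisms of snc pairs (alternatively, this equivalence is simply the notion of logarithmic extension theorem used in this paper, so this reduction can be invoked rather than proved).

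The core step is to show, for every $0 \le i \le r-1$ and every $j \ge 0$, that $(\phi_i)_*\Omega^{[j]}_{Y_i/k}(\log B_{Y_i}) = \Omega^{[j]}_{Y_{i+1}/k}(\log B_{Y_{i+1}})$. Since the inclusion ``$\subseteq$'' is the restriction map of reflexive sheaves and surjectivity of a sheaf morphism may be checked stalk-wise, I may localize $Y_{i+1}$ at an arbitrary closed point (at non-closed points $\phi_i$ is an isomorphism since its exceptional locus has image of codimension $\ge 2$, so the stalk map is trivially surjective there) and hence assume $Y_{i+1} = \Spec R'$ with $(R',\m')$ a two-dimensional local ring essentially of finite type over $k$. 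This puts us exactly in the setting of Proposition \ref{prop:LET for tame dlt pair}, with base $(Y_{i+1}, B_{Y_{i+1}})$ and the proper birational morphism $\phi_i\colon Y_i \to Y_{i+1}$, so it remains to verify its hypotheses (1)--(4). Hypotheses (1) (that $(Y_i,B_{Y_i})$ is tamely dlt) and (2) (that $-(K_{Y_i}+B_{Y_i})$ is $\phi_i$-nef) are precisely conditions (2) and (3) of Definition \ref{def:tame decomposition}; and from $B_Y = f_*^{-1}B + \Exc(f)$ one checks directly that $B_{Y_i} = (\phi_i)_*^{-1}B_{Y_{i+1}} + \Exc(\phi_i)$, using that a component of $f_*^{-1}B$ is never contracted by $\phi_i$ (its image in $X$ is a divisor) and that every $\phi_i$-exceptional prime divisor in $Y_i$ is the strict transform of a component of $\Exc(f)$, hence lies in $B_{Y_i}$.

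The main obstacle is propagating the geometric-regularity hypothesis (3) and the separability hypothesis (4) from condition (1) of Definition \ref{def:tame decomposition} (which only constrains the strata of $B_{Y} = B_{Y_0}$) to the intermediate pairs $(Y_i, B_{Y_i})$. For (3): since $(Y_i, B_{Y_i})$ is dlt, each prime component $C$ of $B_{Y_i}$ is normal, hence regular as a curve; as $C$ is the birational image of a component $C_0$ of $B_Y$ that is smooth over $\kappa$, the normalization $C_0 \to C$ is an isomorphism, so $C$ is smooth over $\kappa$, and any zero-dimensional stratum of $B_{Y_i}$ lies in the snc locus and hence on such a $C$, so its residue field is separable over $\kappa$; thus every stratum of $B_{Y_i}$ is geometrically regular over $\kappa$. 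For (4): a point $P$ in the non-snc locus of $(Y_i, B_{Y_i})$ cannot lie over the isomorphism locus of $g_{i-1}\colon Y_0 \to Y_i$, so $P$ lies in the image of a prime component $E$ of $\Exc(g_{i-1}) \subseteq \Exc(f) \subseteq B_Y$; since $E$ is then a $\kappa$-smooth component of $B_Y$, the field $H^0(E,\sO_E)$ is finite separable over $\kappa$ and contains $\kappa(P)$, so $\kappa(P)/\kappa$ is separable. Granting this, Proposition \ref{prop:LET for tame dlt pair} applies and, together with injectivity of the restriction map, yields the claimed equality.

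Finally I would assemble the pieces by induction on $i$: using $(g_i)_* = (\phi_i)_* \circ (g_{i-1})_*$ and the core step, one gets $(g_i)_*\Omega^{[j]}_{Y_0/k}(\log B_{Y_0}) = \Omega^{[j]}_{Y_{i+1}/k}(\log B_{Y_{i+1}})$ for all $i$; taking $i = r-1$ and noting $B_{Y_r} = f_*B_Y = B$ gives $f_*\Omega^{[j]}_{Y/k}(\log B_Y) = \Omega^{[j]}_{X/k}(\log B)$ for all $j \ge 0$, which by the first paragraph shows that $X$ satisfies the logarithmic extension theorem.
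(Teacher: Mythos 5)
Your skeleton is the same as the paper's: reduce to the stepwise equalities $(\phi_i)_*\Omega^{[j]}_{Y_i/k}(\log B_{Y_i}) = \Omega^{[j]}_{Y_{i+1}/k}(\log B_{Y_{i+1}})$, localize $Y_{i+1}$ at a closed point $y$, and apply Proposition \ref{prop:LET for tame dlt pair}; the bookkeeping about $B_{Y_i}=(\phi_i)_*^{-1}B_{Y_{i+1}}+\Exc(\phi_i)$ and composing pushforwards is fine. The gap is in your verification of hypotheses (3) and (4). Once you localize at $y$, the proposition's conditions are relative to the residue field $\kappa(y)$ of the \emph{new} base, not to $\kappa=R/\m$: you need the $\phi_i$-exceptional strata to be geometrically regular over $\kappa(y)$ and the residue fields at non-snc points to be separable over $\kappa(y)$. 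For the separability statements this follows from what you prove over $\kappa$ by a tower argument, but for geometric regularity of the contracted curves it does not come for free: one must first see that $\kappa(y)/\kappa$ is separable and then upgrade smoothness over $\kappa$ to smoothness over $\kappa(y)$. That upgrade (dominating each stratum or non-snc point by a stratum or component $W$ of $\Exc(f)\subseteq B_Y$, which is smooth over $\kappa$ by condition (1), and then applying descent of smoothness twice together with $\Omega^1_{\kappa(y)/\kappa}=0$) is precisely the content of the paper's proof, and your write-up never registers that the base field has changed.

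More seriously, your treatment of the zero-dimensional strata is incorrect as stated: you infer that a zero-dimensional stratum of $B_{Y_i}$, because it lies in the snc locus on a component $C$ smooth over $\kappa$, has residue field separable over $\kappa$. A closed point of a $\kappa$-smooth curve need not have separable residue field when $\kappa$ is imperfect (e.g.\ the point cut out by $t^p-a$ on $\mathbb{A}^1_\kappa$ with $a\notin\kappa^p$), and even a regular transverse intersection of two $\kappa$-smooth curves can be a point with inseparable residue field; imperfect $\kappa$ is exactly the situation this lemma is designed for. The correct argument has to use condition (1) of Definition \ref{def:tame decomposition} for the strata \emph{upstairs}: either $Y\to Y_i$ is an isomorphism near the point, so its preimage is a zero-dimensional stratum of $B_Y$ and is smooth over $\kappa$ by hypothesis, or some component of $\Exc(f)$ is contracted onto the point, and one descends smoothness from that component — this is how the paper argues, and it simultaneously produces the smoothness over $\kappa(y)$ needed above. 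With these two repairs your proof coincides with the paper's.
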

\begin{proof}
    Take a tame decomposition of $f$ as in Definition \ref{def:tame decomposition}.
    It suffices to show that for every $i$ and $j$, we have
    \[
    (\phi_j)_{*}\Omega^{[i]}_{{Y_j}/k}(\log B_{Y_j}) = \Omega^{[i]}_{{Y_{j+1}}/k}(\log B_{Y_{j+1}}).
    \]
    Let $Z \subseteq Y_{j}$ be a stratum of $\Exc(\phi_j)$ or a singleton set consisting of a closed point in the non-snc locus of $(Y_j, B_{Y_j})$, and $y \in Y_{j+1}$ be the closed point with $\phi_j(Z) = \{y\}$.
    By Proposition \ref{prop:LET for tame dlt pair}, it suffices to show that $Z$ is smooth over the residue field $\kappa(y)$ of $Y_{j+1}$ at $y$.

    When $Z$ is a stratum of $\Exc(\phi_j)$, we can take a stratum $W \subseteq Y$ of $\Exc(f)$ which maps onto $Z$.
    If $Z=\{x\}$ for some non-snc locus $x \in Y_j$, then we may find an irreducible component $W$ of $\Exc(f)$ which maps onto $Z$.
    Therefore, in both cases, we obtain the decomposition
    \[
    W \onto Z \to \Spec \kappa(y) \to \Spec \kappa
    \]
    of the smooth morphism $W \to \Spec \kappa$, where $\kappa=R/\m$.
    It follows from the descent of smoothness \cite[\href{https://stacks.math.columbia.edu/tag/05B5}{Tag 05B5}]{stacks-project} that $\Spec \kappa(y)$ is smooth over $\Spec \kappa$.
    Combining this with the fact that $\kappa(y)$ is of finite type over $\kappa$, it is a finite separable extension.
    Since we have $\Omega_{\kappa(y)/\kappa}^1=0$, the first exact sequence shows that $\Omega_{W/\kappa(y)}^1 \cong \Omega_{W/\kappa}^1$. Thus, $\Omega_{W/\kappa(y)}^1$ is locally free of rank $\dim W$, and $W$ is smooth over $\kappa(y)$.
    By applying the descent of smoothness again, we conclude that $Z$ is smooth over $\kappa(y)$.
\end{proof}

\subsection{Proof of Theorem \ref{Introthm:2-dim}: strongly \texorpdfstring{$F$}{F}-regular case}
In this subsection, we use the classification of strongly $F$-regular surface singularities which was given in \cite[Theorem A]{Kawakami-Sato2}.
Note that strongly $F$-regular surface singularities are klt, and the list of dual graphs of the minimal resolutions of klt surface singularities can be found in \cite[Figure 1]{Sato23}.

\begin{thm}\label{mainthm:2-dim}
    Let $X$ be a two-dimensional normal irreducible scheme essentially of finite type over a perfect field $k$ of characteristic $p>0$.
    If $X$ is strongly $F$-regular, then $X$ satisfies the logarithmic extension theorem for differential forms of any degree.
\end{thm}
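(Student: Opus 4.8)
The plan is to reduce to the affine local case and then invoke Lemma~\ref{lem:tame decomposition implies LET}: it suffices to produce a log resolution $f\colon Y\to X$ admitting a tame decomposition. Since the logarithmic extension theorem is local on $X$, I may assume $X=\Spec R$ with $(R,\m,\kappa)$ local; the finite separable extension of $k$ needed to supply roots of unity is carried out inside Proposition~\ref{prop:LET for tame dlt pair}. By Remark~\ref{rem:F-regular}, $X$ is klt and $\Q$-factorial; let $\pi\colon T\to X$ be the minimal resolution with $\Exc(\pi)=\sum_i E_i$. By Theorem~\ref{thm:SFR to Geom.red} every $E_i$ is geometrically reduced over $\kappa$, and by Theorem~\ref{thm:appendixmain} the dual graph of $\pi$ is one of the finitely many shapes listed there --- a chain, a twisted chain, or one of a handful of (twisted) star-shaped graphs --- subject to the indicated lower bounds on $p$.

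The next step is to arrange the data required by Definition~\ref{def:tame decomposition}. Using geometric reducedness together with the explicit list of dual graphs, one checks that each $E_i$ is in fact geometrically regular over $\kappa$ (a smooth form of $\PP^1$ over a separable extension of $\kappa$) and that the crossing points $E_i\cap E_j$ have residue fields separable over $\kappa$; in particular $(T,\Exc(\pi))$ already has geometrically snc support. I would then take $f\colon Y\to X$ to be $\pi$ itself, or if necessary a minimal further blow-up at closed points with residue field separable over $\kappa$, so that $B_Y\coloneqq\Exc(f)$ has snc support with every stratum smooth over $\kappa$; the residue field at any point of the non-snc locus of the intermediate pairs below is then also separable over $\kappa$, by the descent-of-smoothness argument used in the proof of Lemma~\ref{lem:tame decomposition implies LET}.

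The core of the proof is the construction of the tame decomposition $Y=Y_0\xrightarrow{\phi_0}\cdots\xrightarrow{\phi_{r-1}}Y_r=X$. For a chain or a twisted chain one may take $r=1$: the pair $(Y,\Exc(f))$ is log smooth with geometrically snc boundary, hence tamely dlt of Cartier index one, and a computation on the dual graph shows that $-(K_Y+\Exc(f))$ is nef over $X$. For the (twisted) star-shaped cases one instead contracts the three arms of the star successively, in a suitable order; each $\phi_i$ is then a $(K_{Y_i}+B_{Y_i})$-trivial or $(K_{Y_i}+B_{Y_i})$-negative contraction, so $-(K_{Y_i}+B_{Y_i})$ is nef over $Y_{i+1}$ and dltness of $(Y_i,B_{Y_i})$ propagates from log smoothness of $(Y,B_Y)$. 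At each intermediate stage $Y_i$ has only cyclic quotient singularities coming from the partially contracted sub-configurations, and the Cartier index of $K_{Y_i}+B_{Y_i}$ divides the determinant of the relevant arm-subconfiguration; the characteristic restrictions of Theorem~\ref{thm:appendixmain} --- $p>2$ for type $(2,2,d)$, $p>3$ for $(2,3,3)$ and $(2,3,4)$, and $p>5$ for $(2,3,5)$ --- are precisely what forces these Cartier indices to be prime to $p$, so that $(Y_i,B_{Y_i})$ is tamely dlt for every $i$. Once a tame decomposition has been exhibited, Lemma~\ref{lem:tame decomposition implies LET} gives the theorem.

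I expect the \textbf{main obstacle} to be exactly this tameness bookkeeping in the star-shaped cases: for each of the (few) such dual graphs one must choose a contraction order along which every intermediate pair has Cartier index prime to $p$, and this is where the sharp bounds on $p$ are consumed; the twisted, non-split graphs require extra care since the curves being contracted are then forms of $\PP^1$, which affects the intersection and degree computations on the dual graph. A second, more delicate point --- new in the imperfect-residue-field setting --- is the verification in the second step that all strata are geometrically regular over $\kappa$ and that residue fields at non-snc points are separable over $\kappa$, and that these properties survive the decomposition; this is exactly what allows Proposition~\ref{prop:LET for tame dlt pair} to be applied at each stage inside Lemma~\ref{lem:tame decomposition implies LET}.
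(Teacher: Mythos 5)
Your overall strategy is the paper's: localize, take the minimal resolution, feed Theorem \ref{thm:SFR to Geom.red} and Theorem \ref{thm:appendixmain} into a tame decomposition, and conclude by Lemma \ref{lem:tame decomposition implies LET}. The first genuine gap is that you keep the twisted dual graphs and propose to handle them directly, whereas the paper first passes to a finite \'etale cover (the logarithmic extension theorem can be checked \'etale locally, cf.\ \cite[Lemma 4.6]{Heuver}, combined with \cite[Lemma 4.3]{Sato23}) so that every exceptional curve becomes geometrically irreducible; then $H^0(E_i,\sO_{E_i})=\kappa$ and the twisted chains and twisted stars disappear from the case analysis. If you do not make this reduction, your second and third steps are under-specified: ``contract the three arms'' does not literally parse for a twisted star, where two arms are folded into a single curve attached to the center by a double edge, and all the intersection, different, degree and nefness computations acquire factors of $[H^0(E_i,\sO_{E_i}):\kappa]$. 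None of that is carried out in your sketch, so you should either insert the \'etale reduction or supply these twisted-case computations.

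The second problem is the tameness bookkeeping itself. Your claim that the intermediate Cartier indices divide ``the determinant of the relevant arm-subconfiguration'' and that the bounds on $p$ are ``precisely'' what makes them prime to $p$ fails for type $(2,2,d)$: there only $p>2$ is available, and $d$ may be divisible by $p$, so an order of contraction that collapses the long arm at an intermediate stage produces a cyclic quotient point of order $d$ on the boundary and a Cartier index divisible by $p$. The decomposition must be chosen so that this never happens on a model where Definition \ref{def:tame decomposition} imposes the dlt condition: the paper contracts only the two $(-2)$-curves on the short branches in the first step (intermediate Cartier index $2$, tame as $p\neq 2$), and the last morphism lands on $Y_r=X$, which carries no dlt requirement. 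Likewise, in the $(2,3,q)$ cases the paper uses a two-step decomposition---contract everything except the central curve, then contract the center, with Cartier index dividing $30$---rather than an arm-by-arm procedure; arm-by-arm happens to be harmless there because all sub-determinants are at most $5$, but this needs to be said, not left to ``a suitable order''. With the \'etale reduction added and the decomposition fixed as above, the remaining points you list (smoothness of strata over $\kappa$, separability at non-snc points, the nefness computations, and the application of Proposition \ref{prop:LET for tame dlt pair} inside Lemma \ref{lem:tame decomposition implies LET}) do go through as in the paper.
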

\begin{proof}
We may assume that $X=\Spec R$ for a local ring $(R,\m,\kappa)$.
Let $\pi\colon Y \to X$ be the minimal resolution.
Since $X$ is strongly $F$-regular, it follows from \cite[Theorem A]{Kawakami-Sato2} that every exceptional curve $E_i$ is geometrically reduced over $\kappa$.
Since the logarithmic extension theorem can be checked \'etale locally (cf.~\cite[Lemma 4.6]{Heuver}), applying \cite[Lemma 4.3]{Sato23}, we may assume that every $E_i$ is geometrically irreducible.
Then the geometric integrality of $E_i$ implies that we have $H^0(E_i, \sO_{E_i})=\kappa$, and thus the dual graph of $\pi$ is either a chain or star shaped (cf.~\cite[Figure 1]{Sato23}).

By Lemma \ref{lem:tame decomposition implies LET}, it suffices to show that $\pi$ admits a tame decomposition.
We first note that $E_i$ is smooth over $\kappa$ for every $i$ \cite[Lemma 3.1]{Sato23}.
By the classification of dual graphs, every zero-dimensional stratum of $\Exc(\pi)$ is isomorphic to $\Spec \kappa$.
Therefore, $\pi$ is a log resolution and the condition (1) of Definition \ref{def:tame decomposition} is satisfied.
The rest of the proof is similar to that of \cite[Theorem 1.2]{Gra}.
But we give a proof for the reader’s convenience.
        \begin{enumerate}
        \setcounter{enumi}{0}
        \item (Chain case. (cf.~\cite[Figure 5]{Gra}))
        In this case, we will show that 
        \[
        Y=Y_0 \xrightarrow{\pi} Y_1 =X
        \]
        satisfies the conditions in Definition \ref{def:tame decomposition} since  $-(K_Y+E)$ is nef over $X$ as follows:
        \begin{align*}
        (K_Y+\Exc(\pi)) \cdot E_i &= (K_Y+E_i) \cdot E_i + (\Exc(\pi) -E_i) \cdot E_i\\
        &= -2 + (\Exc(\pi)-E_i) \cdot E_i\leq 0.
        \end{align*}
        
        \item (Star shaped case of type $(2,3,3)$, $(2,3,4)$ or $(2,3,5)$. (cf.~\cite[Figure 7]{Gra}))
        Let $C \subseteq Y$ be the exceptional prime divisor located on the center of the tree.
        As in the proof of \cite[Corollary 1.4.3]{BCHM} (see also \cite[Lemma 6.7]{Bir}, using MMP \cite[Theorem 4.4]{Tanaka(exc)}, we can take an extraction of $C$.
        That is, there is a decomposition 
        \[
        \begin{tikzcd}[column sep=1cm]
        Y=Y_0 \arrow[rr,bend right, "\pi" '] \arrow[r, "\phi_0"] & Y_1 \arrow[r, "\phi_1"] &  Y_2 =X,
        \end{tikzcd}
        \]
        where the exceptional locus of $\phi_1$ is $C_1 \coloneqq (\phi_0)_*C$.
        We will show that this decomposition satisfies the conditions in Definition \ref{def:tame decomposition}.
        
        The nefness of $-(K_{Y}+\Exc(f))$ over $Y_1$ follows from a direct computation as in (1). 
        Moreover, $-(K_{Y_1}+C_1)$ is nef over $X$ since $-C_1$ is ample over $X$ \cite[Lemma 10.2]{Kol} and $K_{Y_1}+a C_1=\phi_1^* K_X$ for some $a<1$.
        
        We finally show that the pair $(Y_1, C_1)$ is tamely dlt.
        Noting that $K_Y+C$ is nef over $Y_1$, the morphism $\phi_0\colon Y_0 \to Y_1$ is a log minimal resolution \cite[Theorem 2.25 (a)]{Kol13} of $(Y_1, C_1)$.
        Therefore, the extended dual graph of the pair $(Y_1, C_1)$ at each non-snc locus is a part of the tree:
                \[
\begin{tikzpicture}
\node[draw, shape=circle, inner sep=1.8pt] (E1) at (0,0){$*$};
\draw node (Dot1) at (1,0){$\cdots$};
\node[draw, shape=circle, inner sep=1.8pt] (C1) at (2,0){$*$};
\node[draw, shape=circle, inner sep=1.8pt, fill=black] (C) at (3,0){$*$};

\draw[shift={(0,0.4)}] (3,0) node{\tiny $C$};

\draw (E1)--(Dot1);
\draw (Dot1)--(C1);
\draw (C1)--(C);
\end{tikzpicture}
        \]
        It then follows from \cite[Claim 3.35.1]{Kol13} that $(Y_1,C_1)$ is plt.
        To verify the tameness, we consider only the case where the dual graph of $\pi$ is star shaped of type $(2,3,5)$ as the others are similar.
        In this case, we have $p>5$ by \cite[Theorem A]{Kawakami-Sato2}.
        On the other hand, since $\phi_0$ is a minimal resolution of $Y_1$, by \cite[Proposition 10.9 (3)]{Kol13}, we obtain that the Cartier index of $K_{Y_1}+C_1$ divides 30.
        \item (Star shaped case of type $(2,2,d)$ (cf.~\cite[Figure 6]{Gra}))
        Let $C_1,C_2 \subseteq Y$ be $(-2)$-curves located on the shortest branches of the dual graph.
        As in the argument in (2), we obtain a decomposition 
        \[
        \begin{tikzcd}[column sep=1cm]
        Y=Y_0 \arrow[rr,bend right, "\pi" '] \arrow[r, "\phi_0"] & Y_1 \arrow[r, "\phi_1"] &  Y_2 =X
        \end{tikzcd}
        \]
        with $\Exc(\phi_0)=C_1 \cup C_2$.
       By the similar argument to (2), the pair $(Y_1,\Exc(\phi_1))$ is dlt with the Cartier index two.
        Since we have $p \neq 2$ (\cite[Theorem A]{Kawakami-Sato2}), this pair is tamely dlt.
        The nefness of $-(K_Y+\Exc(\pi))$ over $Y_1$ is obvious.
        In order to prove that $-(K_Y+\Exc(\phi_1))$ is nef over $X$, we take a $\phi_1$-exceptional curve $C \subseteq Y_1$.
        Then we have
        \begin{align}\label{eq:intersection number}
        (K_Y+\Exc(\phi_1))\cdot C &=(K_Y+C) \cdot C + (\Exc(\phi_1)-C) \cdot C\notag \\
        &=\deg_C (K_C + \mathrm{Diff}_C(0)) +(\Exc(\phi_1)-C) \cdot C \notag\\
        &=-2 + \deg_C(\mathrm{Diff}_C(0)) +(\Exc(\phi_1)-C) \cdot C. 
        \end{align}
        If the strict transform $\widetilde{C}$ on $Y$ is not the center of the tree, then \eqref{eq:intersection number} is negative since 
        \[
        \mathrm{Diff}_C(0) = 0 \textup{  and } (\Exc(\phi_1)-C) \cdot C \leq2.
        \]
        When $\widetilde{C}$ is the center of the tree, the negativity of \eqref{eq:intersection number} follows from the equation
        \[
        \mathrm{Diff}_{C}(0) = \frac{1}{2}P_1 + \frac{1}{2}P_2,
        \]
        where $P_i$ is the center of $C_i$ \cite[Theorem 3.36]{Kol}.
 \end{enumerate}\qedhere
\end{proof}

\subsection{Proof of Theorem \ref{Introthm:2-dim}: log canonical case}

\begin{thm}\label{mainthm:2-dim(lc)}
    Let $(X,B)$ be a two-dimensional pair over a perfect field $k$ of characteristic $p>0$ with $B$ reduced.
    Suppose that $(X,B)$ is log canonical and $p>5$.
    Then $(X,B)$ satisfies the logarithmic extension theorem for differential forms of any degree.
\end{thm}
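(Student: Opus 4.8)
The plan is to reduce the statement, exactly as in the strongly $F$-regular case of Theorem~\ref{mainthm:2-dim}, to the existence of a log resolution of $(X,B)$ admitting a tame decomposition, and then to apply Lemma~\ref{lem:tame decomposition implies LET}. First I would assume $X=\Spec R$ for a local ring $(R,\m,\kappa)$, take the minimal log resolution $f\colon Y\to X$, and set $B_Y\coloneqq f_*^{-1}B+\Exc(f)$; then $(Y,B_Y)$ is log canonical and $-(K_Y+B_Y)$ is $f$-nef.

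Next I would dispose of the imperfect residue field, following the opening of the proof of Theorem~\ref{mainthm:2-dim}. Since $p>5>3$ and $(X,B)$ is log canonical, every exceptional prime divisor $E_i$ of $f$ is geometrically reduced over $\kappa$ by the analysis in \cite{Sato23} recalled in the Introduction. Because the logarithmic extension theorem can be checked \'etale locally (cf.~\cite[Lemma 4.6]{Heuver}), I would invoke \cite[Lemma 4.3]{Sato23} to reduce to the case where each $E_i$ is geometrically integral over $\kappa$; then $H^0(E_i,\sO_{E_i})=\kappa$, each $E_i$ is smooth over $\kappa$, and, by the classification of dual graphs of minimal log resolutions of two-dimensional log canonical pairs (cf.~\cite[Figure 1]{Sato23}, \cite{Kol13}), every stratum of $B_Y$ is smooth over $\kappa$ and every point of the non-snc locus of $(Y,B_Y)$ is $\kappa$-rational. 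In particular $f$ is a genuine log resolution, condition~(1) of Definition~\ref{def:tame decomposition} holds, and condition~(4) of Proposition~\ref{prop:LET for tame dlt pair} is automatic for every birational morphism appearing below.

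It then remains to exhibit a tame decomposition by running through the classification, which after the \'etale reduction is the same list as over an algebraically closed field and is handled as in \cite[Theorem 1.2]{Gra}. In the chain case, $f$ itself is a tame decomposition: the $f$-nefness of $-(K_Y+B_Y)$ is the intersection computation of case~(1) of Theorem~\ref{mainthm:2-dim}, and tameness holds because $Y$ is regular. In the star-shaped klt cases of type $(2,2,d)$, $(2,3,3)$, $(2,3,4)$ or $(2,3,5)$, I would extract the central curve(s) in the order used in cases~(2) and~(3) of Theorem~\ref{mainthm:2-dim}, via the relative MMP \cite[Theorem 4.4]{Tanaka(exc)} (cf.~\cite[Corollary 1.4.3]{BCHM}): pltness of the intermediate pairs follows from \cite[Claim 3.35.1]{Kol13}, the required nefness from computations as in \eqref{eq:intersection number}, and tameness from the fact that the Cartier indices occurring divide $30$ by \cite[Proposition 10.9 (3)]{Kol13}, hence are prime to $p$ since $p>5$. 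In the log canonical, non-klt cases — a smooth elliptic curve or a cycle of rational curves (here the minimal log resolution is automatically snc and $K_Y+B_Y=f^*(K_X+B)$ is $f$-crepant, so $-(K_Y+B_Y)$ is $f$-numerically trivial and $(Y,B_Y)$ is dlt with $Y$ regular), and their dihedral or cyclic quotient graphs ($\widetilde D_n$, $\widetilde E_6$, $\widetilde E_7$, $\widetilde E_8$, and chains carrying a $[2,2]$-fork at each end, where I would first extract the central exceptional curve as in the klt case, reducing to a tame plt pair and then to a chain, the indices again dividing $30$) — a tame decomposition is again produced. Collecting all cases yields a log resolution of $(X,B)$ with a tame decomposition, and Lemma~\ref{lem:tame decomposition implies LET} finishes the proof.

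I expect the main obstacle to be the bookkeeping forced by the imperfect residue field together with the log canonical (non-dlt or quotient) configurations. One must ensure that after the \'etale reduction all strata and non-snc points of $(Y,B_Y)$ become $\kappa$-rational, so that the differents computed through Theorem~\ref{thm:reside and restr ex seq} coincide with the classical formulas and Proposition~\ref{prop:LET for tame dlt pair} applies; and one must carry out the crepant blow-ups and divisorial extractions needed before a tame decomposition becomes visible, while checking uniformly across the classification that the Cartier indices arising along it are prime to $p$ — this last point being exactly what pins down the hypothesis $p>5$ (the bound $30$ in the $(2,3,5)$ star-shaped case).
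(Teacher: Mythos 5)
Your skeleton (reduce to a tame decomposition and apply Lemma~\ref{lem:tame decomposition implies LET}, after an \'etale reduction to geometrically irreducible exceptional curves) is the paper's skeleton, but the execution of the non-klt cases has a genuine gap. The paper's key device in the non-elliptic case is to write $K_Y+\Delta_Y=\pi^*(K_X+B)$ and, in a single MMP step (as in \cite[Lemma 6.7]{Bir}), extract \emph{all} exceptional curves with $\mathrm{ord}_{E_i}(\Delta_Y)=1$, contracting only the coefficient-$<1$ curves. This is what makes the middle pair of the decomposition dlt: the coefficient-one curves stay in the boundary and are allowed to cross each other at snc points, while the contracted curves land on plt points of small index. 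Your substitute for the dihedral-type graphs (``chains carrying a $[2,2]$-fork at each end: first extract the central exceptional curve, reducing to a tame plt pair'') fails whenever the central chain of coefficient-one curves has length at least two (and, similarly, for cycles of rational curves and for lc pairs with $B\neq 0$, which your case list does not really address). Indeed, since the extracted curve $C$ has coefficient one, $K_{Y_1}+C_1=\phi_1^*(K_X+B)$ is crepant, so every remaining coefficient-one curve has log discrepancy $0$ over $(Y_1,C_1)$; but those curves are contracted by $\phi_0$ to \emph{singular} (hence non-snc) points of $(Y_1,C_1)$, so the pair is lc but not dlt, violating condition (2) of Definition~\ref{def:tame decomposition}. (The ``indices divide $30$'' claim is also not the right invariant here — e.g.\ the $(2,4,4)$ graph produces index $4$ — though with $p>5$ tameness is not the actual obstruction; dlt-ness is.)

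Two further points are asserted where the paper needs specific input. First, in the simple elliptic case the stratum condition (1) of Definition~\ref{def:tame decomposition} requires the genus-one exceptional curve to be \emph{smooth over} $\kappa$; over the imperfect residue field regularity does not give this, and the paper invokes \cite[Theorem 1.1]{PW} (this is where $p>3$ is used), while in the nodal subcase one must blow up the node, check that the node is $\kappa$-rational and that the strict transform is smooth over $\kappa$ (via the proof of \cite[Proposition 4.13]{Sato23} and \cite[Lemma 3.1]{Sato23}); your appeal to ``the classification'' does not cover these. Second, geometric reducedness of the $E_i$ is not a single citation: the paper splits into $B=0$ (proofs of several propositions of \cite{Sato23}) and $B\neq 0$ (genus zero and $[H^0(\sO_{E_i}):\kappa]\le 2$ from \cite[Section 3.35]{Kol13} plus \cite[Lemma 3.3]{Sato23}). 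Finally, note the paper dispatches the klt case at once: klt and $p>5$ imply strong $F$-regularity (Theorem~\ref{thm:appendixmain}), so Theorem~\ref{mainthm:2-dim} applies — re-running the star-shaped analysis there is harmless but unnecessary.
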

\begin{proof}
If $(X,B)$ is klt, then we have $B=0$ and $X$ is strongly $F$-regular (\cite[Theorem A]{Kawakami-Sato2} or \cite[Theorem 1.2]{Sato-Takagi(generalhyperplane)}).
In particular, the assertion follows from Theorem \ref{mainthm:2-dim}.
From now on, we assume that $(X,B)$ is not klt.
We may also assume that $X=\Spec R$ for a local ring $(R,\m,\kappa)$.

Let $\pi\colon Y \to X$ be the minimal resolution.
We first show that every exceptional cuver $E_i$ is geometrically reduced over $\kappa$. 
When $B=0$, this follows from the proof of \cite[Proposition 4.4, 4.8, 4.10, 4.11, 4.13 and 4.14]{Sato23}.
If $B \neq 0$, it follows from the classification \cite[Section 3.35]{Kol13} that any exceptional curve $E_i$ has $[H^0(\sO_{E_i}): R/\m] \leq2$ and genus zero.
Then we obtain the geometrical reducedness by \cite[Lemma 3.3]{Sato23}.

On the other hand, since the logarithmic extension theorem can be checked \'etale locally (cf.~\cite[Lemma 4.6]{Heuver}), applying \cite[Lemma 4.3]{Sato23}, we may assume that every $E_i$ is geometrically irreducible.
Then the geometric integrality of $E_i$ implies that we have $H^0(E_i, \sO_{E_i})=\kappa$, and thus the dual graph of $\pi$ is either a chain or star shaped (cf.~\cite[Figure 2 and 4]{Sato23}).

\textbf{Case 1:} We first consider the case where the singularity of $X$ is not of simple elliptic.
By Lemma \ref{lem:tame decomposition implies LET}, it suffices to show that $\pi$ admits a tame decomposition.
We first note that $E_i$ is smooth over $\kappa$ for every $i$ \cite[Lemma 3.1]{Sato23}.
By the classification of dual graphs, every zero dimensional stratum of $\Exc(\pi)$ is isomorphic to $\Spec \kappa$.
Therefore, $\pi$ is a log resolution and the condition (1) of Definition \ref{def:tame decomposition} is satisfied.
We write
\[
K_Y+ \Delta_Y = \pi^*(K_X+B).
\]
As in \cite[Lemma 6.7]{Bir}, using MMP \cite[Theorem 4.4]{Tanaka(exc)}, we can extract all exceptional curves $E_i$ with $\mathrm{ord}_{E_i}(\Delta_Y)=1$.
    That is, there is a decomposition 
        \[
        \begin{tikzcd}[column sep=1cm]
        Y=Y_0 \arrow[rr,bend right, "\pi" '] \arrow[r, "\phi_0"] & Y_1 \arrow[r, "\phi_1"] &  Y_2 =X,
        \end{tikzcd}
        \]
        where the exceptional locus of $\phi_1$ is the union of all exceptional curves $E_i$ with $\mathrm{ord}_{E_i}(\Delta_Y)=1$.
        Then this decomposition satisfies the conditions in Definition \ref{def:tame decomposition} by a similar argument to \cite[Theorem 1.2]{Gra} or Theorem \ref{mainthm:2-dim}.

    \textbf{Case 2:}
    In what follows, we treat the case where $X$ is simple elliptic.
   In this case, we have $B=0$ and $E=\Exc(\pi)$ consists of a single curve with genus one.
        We have two subcases: either $E$ is regular or singular.
        In the former case, the minimal resolution $\pi$ is a log resolution.
        Since we have $\pi^*K_X = K_Y+E$, the morphism
        \[
        Y= : Y_0 \xrightarrow{\pi} Y_1 \coloneqq X
        \]
        satisfies the conditions (2) and (3) in Definition \ref{def:tame decomposition}.
        The condition (1) follows from  \cite[Theorem 1.1]{PW} (cf.~\cite[Proposition 9.11 (2)]{Tanaka(invariants)}) as $p>3$.
        Thus, $\pi$ admit a tame decomposition.
        
        In the latter case, by the proof of \cite[Proposition 4.13]{Sato23}, $E$ has a unique singular point $P \in E$.
        Moreover, $E$ is smooth over $\kappa$ outside $P$, $E$ is node at $P$, and the residue field $\kappa(P)$ is isomorphic to $\kappa$.
        
        The composite $f: Z \to Y$ of $\pi$ and the blowing up $Z \to Y$ at $P$ is a log resolution of $X$.
        The strict transform of $E$ to $Z$ is smooth over $\kappa$ by \cite[Lemma 3.1]{Sato23}, which implies that each strata of $\Exc(f)$ is smooth over $\kappa$.
        Combining this with the equation $f^*K_X = K_Z+\Exc(f)$, we conclude that $f$ admits a tame decomposition
        \[
        Z=: Y_0 \xrightarrow{f} Y_1 \coloneqq X,
        \]
        and we conclude.
\end{proof}

As an immediate consequence of Theorems \ref{mainthm:2-dim} and \ref{mainthm:2-dim(lc)}, we obtain the following assertion:
\begin{cor}\label{cor:log ext for codim two}
    Let $X$ be a normal variety over a perfect field of characteristic $p>0$.
    Suppose that one of the following holds:
    \begin{enumerate}[label=\textup{(\arabic*)}]
        \item $X$ is strongly $F$-regular.
        \item $X$ is lc and $p>5$.
    \end{enumerate}
    Then, for any proper birational morphism $f\colon Y\to X$ with reduced $f$-exceptional divisor $E$, the restriction map
    \[
    f_{*}\Omega^{[i]}_Y(\log E)\to f_{*}\Omega^{[i]}_X
    \]
    is surjective in codimension two for all $i\geq 0$.
\end{cor}

\section*{Acknowledgements}
We wish to express our gratitude to  Kenta Hashizume, Hiroki Matsui, Austyn Simpson, and Shou Yoshikawa
for valuable conversations.
Kawakami was supported by JSPS KAKENHI Grant number
JP22KJ1771 and JP24K16897.
Sato was supported by JSPS KAKENHI Grant number
JP20K14303 and JP24K16900.

\bibliography{KawakamiSato_LET_arXiv_v3.bib}

@article {Kol,
    AUTHOR = {Koll\'{a}r, J\'{a}nos},
     TITLE = {Nonrational hypersurfaces},
   JOURNAL = {J. Amer. Math. Soc.},
  FJOURNAL = {Journal of the American Mathematical Society},
    VOLUME = {8},
      YEAR = {1995},
    NUMBER = {1},
     PAGES = {241--249},
      ISSN = {0894-0347},
   MRCLASS = {14E05 (14J45 14J60 14J70)},
  MRNUMBER = {1273416},
MRREVIEWER = {Alessio Corti},
       DOI = {10.2307/2152888},
       URL = {https://doi.org/10.2307/2152888},
}

@book {EV,
    AUTHOR = {Esnault, H\'{e}l\`ene and Viehweg, Eckart},
     TITLE = {Lectures on vanishing theorems},
    SERIES = {DMV Seminar},
    VOLUME = {20},
 PUBLISHER = {Birkh\"{a}user Verlag, Basel},
      YEAR = {1992},
     PAGES = {vi+164},
      ISBN = {3-7643-2822-3},
   MRCLASS = {14F17 (14F40 32L10 32L20)},
  MRNUMBER = {1193913},
MRREVIEWER = {Marko Roczen},
       DOI = {10.1007/978-3-0348-8600-0},
       URL = {https://doi.org/10.1007/978-3-0348-8600-0},
}

@article {Gra,
    AUTHOR = {Graf, Patrick},
     TITLE = {Differential forms on log canonical spaces in positive
              characteristic},
   JOURNAL = {J. Lond. Math. Soc. (2)},
  FJOURNAL = {Journal of the London Mathematical Society. Second Series},
    VOLUME = {104},
      YEAR = {2021},
    NUMBER = {5},
     PAGES = {2208--2239},
      ISSN = {0024-6107},
   MRCLASS = {14B05 (13A35 14J17)},
  MRNUMBER = {4368674},
}

@book {Matsumura,
    AUTHOR = {Matsumura, Hideyuki},
     TITLE = {Commutative ring theory},
    SERIES = {Cambridge Studies in Advanced Mathematics},
    VOLUME = {8},
   EDITION = {Second},
      NOTE = {Translated from the Japanese by M. Reid},
 PUBLISHER = {Cambridge University Press, Cambridge},
      YEAR = {1989},
     PAGES = {xiv+320},
      ISBN = {0-521-36764-6},
   MRCLASS = {13-01},
  MRNUMBER = {1011461},
}

@article {Kaw1,
    AUTHOR = {Kawakami, Tatsuro},
     TITLE = {Bogomolov-{S}ommese type vanishing for globally {$F$}-regular
              threefolds},
   JOURNAL = {Math. Z.},
  FJOURNAL = {Mathematische Zeitschrift},
    VOLUME = {299},
      YEAR = {2021},
    NUMBER = {3-4},
     PAGES = {1821--1835},
      ISSN = {0025-5874},
   MRCLASS = {14F17 (13A35)},
  MRNUMBER = {4329270},
       DOI = {10.1007/s00209-021-02740-8},
       URL = {https://doi-org.utokyo.idm.oclc.org/10.1007/s00209-021-02740-8},
}

@book {Har,
    AUTHOR = {Hartshorne, Robin},
     TITLE = {Algebraic geometry},
      NOTE = {Graduate Texts in Mathematics, No. 52},
 PUBLISHER = {Springer-Verlag, New York-Heidelberg},
      YEAR = {1977},
     PAGES = {xvi+496},
      ISBN = {0-387-90244-9},
   MRCLASS = {14-01},
  MRNUMBER = {0463157},
MRREVIEWER = {Robert Speiser},
}

@article {Hara98,
    AUTHOR = {Hara, Nobuo},
     TITLE = {A characterization of rational singularities in terms of
              injectivity of {F}robenius maps},
   JOURNAL = {Amer. J. Math.},
  FJOURNAL = {American Journal of Mathematics},
    VOLUME = {120},
      YEAR = {1998},
    NUMBER = {5},
     PAGES = {981--996},
      ISSN = {0002-9327},
   MRCLASS = {13A99 (14B05 14E05)},
  MRNUMBER = {1646049},
MRREVIEWER = {Karen E. Smith},
       URL =
              {http://muse.jhu.edu/journals/american_journal_of_mathematics/v120/120.5hara.pdf},
}

@book {fbook,
    AUTHOR = {Brion, Michel and Kumar, Shrawan},
     TITLE = {Frobenius splitting methods in geometry and representation
              theory},
    SERIES = {Progress in Mathematics},
    VOLUME = {231},
 PUBLISHER = {Birkh\"{a}user Boston, Inc., Boston, MA},
      YEAR = {2005},
     PAGES = {x+250},
      ISBN = {0-8176-4191-2},
   MRCLASS = {14M15 (13A35 14C05 17B10 20G05)},
  MRNUMBER = {2107324},
MRREVIEWER = {Vikram B. Mehta},
}

@article {GKK,
    AUTHOR = {Greb, Daniel and Kebekus, Stefan and Kov\'{a}cs, S\'{a}ndor J.},
     TITLE = {Extension theorems for differential forms and
              {B}ogomolov-{S}ommese vanishing on log canonical varieties},
   JOURNAL = {Compos. Math.},
  FJOURNAL = {Compositio Mathematica},
    VOLUME = {146},
      YEAR = {2010},
    NUMBER = {1},
     PAGES = {193--219},
      ISSN = {0010-437X},
   MRCLASS = {14F17 (14J17)},
  MRNUMBER = {2581247},
MRREVIEWER = {Mirroslav Tzanov Yotov},
       DOI = {10.1112/S0010437X09004321},
       URL = {https://doi.org/10.1112/S0010437X09004321},
}

@article {GKKP,
    AUTHOR = {Greb, Daniel and Kebekus, Stefan and Kov\'{a}cs, S\'{a}ndor J. and
              Peternell, Thomas},
     TITLE = {Differential forms on log canonical spaces},
   JOURNAL = {Publ. Math. Inst. Hautes \'{E}tudes Sci.},
  FJOURNAL = {Publications Math\'{e}matiques. Institut de Hautes \'{E}tudes
              Scientifiques},
    VOLUME = {114},
      YEAR = {2011},
     PAGES = {87--169},
      ISSN = {0073-8301},
   MRCLASS = {14F10 (14C20 14E15 14F05)},
  MRNUMBER = {2854859},
MRREVIEWER = {Anne-Sophie Kaloghiros},
       DOI = {10.1007/s10240-011-0036-0},
       URL = {https://doi.org/10.1007/s10240-011-0036-0},
}

@article {JK,
    AUTHOR = {Jabbusch, Kelly and Kebekus, Stefan},
     TITLE = {Families over special base manifolds and a conjecture of
              {C}ampana},
   JOURNAL = {Math. Z.},
  FJOURNAL = {Mathematische Zeitschrift},
    VOLUME = {269},
      YEAR = {2011},
    NUMBER = {3-4},
     PAGES = {847--878},
      ISSN = {0025-5874},
   MRCLASS = {14J10 (14D06 14D20)},
  MRNUMBER = {2860268},
MRREVIEWER = {Daniel Greb},
       DOI = {10.1007/s00209-010-0758-6},
       URL = {https://doi.org/10.1007/s00209-010-0758-6},
}

@article {Langer19,
    AUTHOR = {Langer, Adrian},
     TITLE = {Birational geometry of compactifications of {D}rinfeld
              half-spaces over a finite field},
   JOURNAL = {Adv. Math.},
  FJOURNAL = {Advances in Mathematics},
    VOLUME = {345},
      YEAR = {2019},
     PAGES = {861--908},
      ISSN = {0001-8708},
   MRCLASS = {14G15 (14E05 14J26)},
  MRNUMBER = {3902334},
MRREVIEWER = {Christian Liedtke},
       DOI = {10.1016/j.aim.2019.01.031},
       URL = {https://doi.org/10.1016/j.aim.2019.01.031},
}

@book {Kol13,
    AUTHOR = {Koll\'{a}r, J\'{a}nos},
     TITLE = {Singularities of the minimal model program},
    SERIES = {Cambridge Tracts in Mathematics},
    VOLUME = {200},
      NOTE = {With a collaboration of S\'{a}ndor Kov\'{a}cs},
 PUBLISHER = {Cambridge University Press, Cambridge},
      YEAR = {2013},
     PAGES = {x+370},
      ISBN = {978-1-107-03534-8},
   MRCLASS = {14E30 (14B05)},
  MRNUMBER = {3057950},
MRREVIEWER = {Tommaso De Fernex},
       DOI = {10.1017/CBO9781139547895},
       URL = {https://doi.org/10.1017/CBO9781139547895},
}

@article {Bir,
    AUTHOR = {Birkar, Caucher},
     TITLE = {Existence of flips and minimal models for 3-folds in char
              {$p$}},
   JOURNAL = {Ann. Sci. \'{E}c. Norm. Sup\'{e}r. (4)},
  FJOURNAL = {Annales Scientifiques de l'\'{E}cole Normale Sup\'{e}rieure. Quatri\`eme
              S\'{e}rie},
    VOLUME = {49},
      YEAR = {2016},
    NUMBER = {1},
     PAGES = {169--212},
      ISSN = {0012-9593},
   MRCLASS = {14E30 (14J30)},
  MRNUMBER = {3465979},
MRREVIEWER = {Massimiliano Mella},
       DOI = {10.24033/asens.2279},
       URL = {https://doi.org/10.24033/asens.2279},
}

@article {Hara-Watanabe,
    AUTHOR = {Hara, Nobuo and Watanabe, Kei-Ichi},
     TITLE = {F-regular and {F}-pure rings vs. log terminal and log
              canonical singularities},
   JOURNAL = {J. Algebraic Geom.},
  FJOURNAL = {Journal of Algebraic Geometry},
    VOLUME = {11},
      YEAR = {2002},
    NUMBER = {2},
     PAGES = {363--392},
      ISSN = {1056-3911},
   MRCLASS = {13A35 (14B05 14J17)},
  MRNUMBER = {1874118},
MRREVIEWER = {Karen E. Smith},
       DOI = {10.1090/S1056-3911-01-00306-X},
       URL = {https://doi.org/10.1090/S1056-3911-01-00306-X},
}

@book {BH93,
    AUTHOR = {Bruns, Winfried and Herzog, J\"{u}rgen},
     TITLE = {Cohen-{M}acaulay rings},
    SERIES = {Cambridge Studies in Advanced Mathematics},
    VOLUME = {39},
 PUBLISHER = {Cambridge University Press, Cambridge},
      YEAR = {1993},
     PAGES = {xii+403},
      ISBN = {0-521-41068-1},
   MRCLASS = {13H10 (13-02)},
  MRNUMBER = {1251956},
MRREVIEWER = {Matthew Miller},
}

@article {SvS85,
    AUTHOR = {van Straten, D. and Steenbrink, J.},
     TITLE = {Extendability of holomorphic differential forms near isolated
              hypersurface singularities},
   JOURNAL = {Abh. Math. Sem. Univ. Hamburg},
  FJOURNAL = {Abhandlungen aus dem Mathematischen Seminar der Universit\"{a}t
              Hamburg},
    VOLUME = {55},
      YEAR = {1985},
     PAGES = {97--110},
      ISSN = {0025-5858},
   MRCLASS = {32B30 (32C30 32C40)},
  MRNUMBER = {831521},
MRREVIEWER = {M. Sebastiani},
       DOI = {10.1007/BF02941491},
       URL = {https://doi.org/10.1007/BF02941491},
}

@article {Flenner88,
    AUTHOR = {Flenner, Hubert},
     TITLE = {Extendability of differential forms on nonisolated
              singularities},
   JOURNAL = {Invent. Math.},
  FJOURNAL = {Inventiones Mathematicae},
    VOLUME = {94},
      YEAR = {1988},
    NUMBER = {2},
     PAGES = {317--326},
      ISSN = {0020-9910},
   MRCLASS = {14B05 (14B15 14C30 32B30 32J25)},
  MRNUMBER = {958835},
MRREVIEWER = {Ulrich Karras},
       DOI = {10.1007/BF01394328},
       URL = {https://doi.org/10.1007/BF01394328},
}

@article {Namikawa,
    AUTHOR = {Namikawa, Yoshinori},
     TITLE = {Extension of 2-forms and symplectic varieties},
   JOURNAL = {J. Reine Angew. Math.},
  FJOURNAL = {Journal f\"{u}r die Reine und Angewandte Mathematik. [Crelle's
              Journal]},
    VOLUME = {539},
      YEAR = {2001},
     PAGES = {123--147},
      ISSN = {0075-4102},
   MRCLASS = {32G05 (32G20 32J18 32S20)},
  MRNUMBER = {1863856},
MRREVIEWER = {Mark Gross},
       DOI = {10.1515/crll.2001.070},
       URL = {https://doi.org/10.1515/crll.2001.070},
}

@article {Graf21,
    AUTHOR = {Graf, Patrick},
     TITLE = {A note on {F}lenner's extension theorem},
   JOURNAL = {Manuscripta Math.},
  FJOURNAL = {Manuscripta Mathematica},
    VOLUME = {165},
      YEAR = {2021},
    NUMBER = {3-4},
     PAGES = {597--603},
      ISSN = {0025-2611},
   MRCLASS = {14J17 (14G17 32J25 32S05)},
  MRNUMBER = {4280499},
       DOI = {10.1007/s00229-020-01233-y},
       URL = {https://doi.org/10.1007/s00229-020-01233-y},
}

@article {KS21,
    AUTHOR = {Kebekus, Stefan and Schnell, Christian},
     TITLE = {Extending holomorphic forms from the regular locus of a
              complex space to a resolution of singularities},
   JOURNAL = {J. Amer. Math. Soc.},
  FJOURNAL = {Journal of the American Mathematical Society},
    VOLUME = {34},
      YEAR = {2021},
    NUMBER = {2},
     PAGES = {315--368},
      ISSN = {0894-0347},
   MRCLASS = {32S20 (14B05 14B15)},
  MRNUMBER = {4280862},
       DOI = {10.1090/jams/962},
       URL = {https://doi.org/10.1090/jams/962},
}

@article {Heuver,
    AUTHOR = {Heuver, S.},
     TITLE = {Extension theorems for differential forms on low-dimensional
              {GIT} quotients},
   JOURNAL = {Transform. Groups},
  FJOURNAL = {Transformation Groups},
    VOLUME = {25},
      YEAR = {2020},
    NUMBER = {1},
     PAGES = {81--125},
      ISSN = {1083-4362},
   MRCLASS = {14L24 (14F40)},
  MRNUMBER = {4070103},
MRREVIEWER = {Claudia R. Alc\'{a}ntara},
       DOI = {10.1007/s00031-019-09517-8},
       URL = {https://doi.org/10.1007/s00031-019-09517-8},
}

@article {Takagi-Watanabe,
    AUTHOR = {Takagi, Shunsuke and Watanabe, Kei-Ichi},
     TITLE = {{$F$}-singularities: applications of characteristic {$p$}
              methods to singularity theory [translation of {MR}3135334]},
   JOURNAL = {Sugaku Expositions},
  FJOURNAL = {Sugaku Expositions},
    VOLUME = {31},
      YEAR = {2018},
    NUMBER = {1},
     PAGES = {1--42},
      ISSN = {0898-9583},
   MRCLASS = {13A35 (14B05)},
  MRNUMBER = {3784697},
MRREVIEWER = {Ehsan Tavanfar},
       DOI = {10.1090/suga/427},
       URL = {https://doi.org/10.1090/suga/427},
}

@article {ABL,
    AUTHOR = {Arvidsson, Emelie and Bernasconi, Fabio and Lacini, Justin},
     TITLE = {On the {K}awamata-{V}iehweg vanishing theorem for log del
              {P}ezzo surfaces in positive characteristic},
   JOURNAL = {Compos. Math.},
  FJOURNAL = {Compositio Mathematica},
    VOLUME = {158},
      YEAR = {2022},
    NUMBER = {4},
     PAGES = {750--763},
      ISSN = {0010-437X},
   MRCLASS = {14E30 (14F17 14G17 14J17)},
  MRNUMBER = {4438290},
       DOI = {10.1112/S0010437X22007394},
       URL = {https://doi.org/10.1112/S0010437X22007394},
}

@misc{stacks-project,
    AUTHOR = "Stacks Project Authors, The",
    TITLE = "The {S}tacks {P}roject",
    URL = "https://stacks.math.columbia.edu/",
    howpublished = {\url{https://stacks.math.columbia.edu/}},
    YEAR = {2025},
}

@book {HS06,
    AUTHOR = {Huneke, Craig and Swanson, Irena},
     TITLE = {Integral closure of ideals, rings, and modules},
    SERIES = {London Mathematical Society Lecture Note Series},
    VOLUME = {336},
 PUBLISHER = {Cambridge University Press, Cambridge},
      YEAR = {2006},
     PAGES = {xiv+431},
      ISBN = {978-0-521-68860-4; 0-521-68860-4},
   MRCLASS = {13B22 (13A18 13A30 13A35 13H15 14A05)},
  MRNUMBER = {2266432},
MRREVIEWER = {Liam\ O'Carroll},
}

@article {Cam11,
    AUTHOR = {Campana, Fr\'ed\'eric},
     TITLE = {Orbifoldes g\'eom\'etriques sp\'eciales et classification
              bim\'eromorphe des vari\'et\'es k\"ahl\'eriennes compactes},
   JOURNAL = {J. Inst. Math. Jussieu},
  FJOURNAL = {Journal of the Institute of Mathematics of Jussieu. JIMJ.
              Journal de l'Institut de Math\'ematiques de Jussieu},
    VOLUME = {10},
      YEAR = {2011},
    NUMBER = {4},
     PAGES = {809--934},
      ISSN = {1474-7480,1475-3030},
   MRCLASS = {32J27 (11G05 32J25 32Q45)},
  MRNUMBER = {2831280},
       DOI = {10.1017/S1474748010000101},
       URL = {https://doi.org/10.1017/S1474748010000101},
}

@article {Graf-Kovacs,
    AUTHOR = {Graf, Patrick and Kov\'acs, S\'andor J.},
     TITLE = {An optimal extension theorem for 1-forms and the
              {L}ipman-{Z}ariski conjecture},
   JOURNAL = {Doc. Math.},
  FJOURNAL = {Documenta Mathematica},
    VOLUME = {19},
      YEAR = {2014},
     PAGES = {815--830},
      ISSN = {1431-0635,1431-0643},
   MRCLASS = {14B05 (32S05)},
  MRNUMBER = {3247804},
MRREVIEWER = {Carles\ Bivi\`a-Ausina},
}

@misc{Kawakami-Takamatsu,
  title={On {F}robenius liftability of surface singularities},
  author={Kawakami, Tatsuro and Takamatsu, Teppei},
  howpublished={\url{https://arxiv.org/abs/2402.08152}},
  note={To appear in \emph{Doc.~Math.}},
  year={2024}
}

@misc{Kawakami-Sato2,
  title={Classification of {F}-regular surface singularities with imperfect reside fields},
  author={Kawakami, Tatsuro and Sato, Kenta},
  howpublished={In preparation},
  year={2026}
}

@article {KTTWYY1,
    AUTHOR = {Kawakami, Tatsuro and Takamatsu, Teppei and Tanaka, Hiromu and
              Witaszek, Jakub and Yobuko, Fuetaro and Yoshikawa, Shou},
     TITLE = {Quasi-{$F$}-splittings in birational geometry},
   JOURNAL = {Ann. Sci. \'Ec. Norm. Sup\'er. (4)},
  FJOURNAL = {Annales Scientifiques de l'\'Ecole Normale Sup\'erieure.
              Quatri\`eme S\'erie},
    VOLUME = {58},
      YEAR = {2025},
    NUMBER = {3},
     PAGES = {665--748},
      ISSN = {0012-9593,1873-2151},
   MRCLASS = {14E05 (13A35 14G17)},
  MRNUMBER = {4962159},
}

@article {KTTWYY2,
    AUTHOR = {Kawakami, Tatsuro and Takamatsu, Teppei and Tanaka, Hiromu and
              Witaszek, Jakub and Yobuko, Fuetaro and Yoshikawa, Shou},
     TITLE = {Quasi-{F}-splittings in birational geometry {II}},
   JOURNAL = {Proc. Lond. Math. Soc. (3)},
  FJOURNAL = {Proceedings of the London Mathematical Society. Third Series},
    VOLUME = {128},
      YEAR = {2024},
    NUMBER = {4},
     PAGES = {Paper No. e12593, 81},
      ISSN = {0024-6115,1460-244X},
   MRCLASS = {14E30 (13A35 14G17)},
  MRNUMBER = {},
}

@misc{Kaw4,
  title={Extendability of differential forms via {C}artier operators},
  author={Kawakami, Tatsuro},
  howpublished ={\url{https://arxiv.org/abs/2207.13967v4}},
  note={To appear in \emph{J.~Eur.~Math.~Soc.~(JEMS)}},
  year={2022}
}

@misc{KTTWYY3,
  title={Quasi-{F}-splittings in birational geometry {III}},
  author={Kawakami, Tatsuro and Takamatsu, Teppei and Tanaka, Hiromu and Witaszek, Jakub and Yobuko, Fuetaro and Yoshikawa, Shou},
howpublished ={\url{https://arxiv.org/abs/2408.01921}},
note={To appear in \emph{Mem.~Eur.~Math.~Soc.}},
  year={2024}
}

@article {GNT16,
    AUTHOR = {Gongyo, Yoshinori and Nakamura, Yusuke and Tanaka, Hiromu},
     TITLE = {Rational points on log {F}ano threefolds over a finite field},
   JOURNAL = {J. Eur. Math. Soc. (JEMS)},
  FJOURNAL = {Journal of the European Mathematical Society (JEMS)},
    VOLUME = {21},
      YEAR = {2019},
    NUMBER = {12},
     PAGES = {3759--3795},
      ISSN = {1435-9855},
   MRCLASS = {14J45 (14E30 14G05)},
  MRNUMBER = {4022715},
MRREVIEWER = {Marta Pieropan},
       DOI = {10.4171/JEMS/913},
       URL = {https://doi.org/10.4171/JEMS/913},
}

@misc{Sato23,
  title={General hyperplane sections of log canonical threefolds in positive characteristic},
  author={Sato, Kenta},
  howpublished={\url{https://arxiv.org/abs/2303.14599}},
  note={To appear in \emph{J. Inst. Math. Jussieu}},
  year={2023}
}

@article {PW,
    AUTHOR = {Patakfalvi, Zsolt and Waldron, Joe},
     TITLE = {Singularities of general fibers and the {LMMP}},
   JOURNAL = {Amer. J. Math.},
  FJOURNAL = {American Journal of Mathematics},
    VOLUME = {144},
      YEAR = {2022},
    NUMBER = {2},
     PAGES = {505--540},
      ISSN = {0002-9327,1080-6377},
   MRCLASS = {14E30 (14C20 14G17 32S65 37F75)},
  MRNUMBER = {4401510},
MRREVIEWER = {Calum\ Spicer},
       DOI = {10.1353/ajm.2022.0009},
       URL = {https://doi.org/10.1353/ajm.2022.0009},
}

@article {Lipman65,
    AUTHOR = {Lipman, Joseph},
     TITLE = {Free derivation modules on algebraic varieties},
   JOURNAL = {Amer. J. Math.},
  FJOURNAL = {American Journal of Mathematics},
    VOLUME = {87},
      YEAR = {1965},
     PAGES = {874--898},
      ISSN = {0002-9327,1080-6377},
   MRCLASS = {14.52},
  MRNUMBER = {186672},
MRREVIEWER = {M.\ Nagata},
       DOI = {10.2307/2373252},
       URL = {https://doi.org/10.2307/2373252},
}

@article {Druel,
    AUTHOR = {Druel, St\'ephane},
     TITLE = {The {Z}ariski-{L}ipman conjecture for log canonical spaces},
   JOURNAL = {Bull. Lond. Math. Soc.},
  FJOURNAL = {Bulletin of the London Mathematical Society},
    VOLUME = {46},
      YEAR = {2014},
    NUMBER = {4},
     PAGES = {827--835},
      ISSN = {0024-6093,1469-2120},
   MRCLASS = {14B05 (32B05 32S65)},
  MRNUMBER = {3239620},
MRREVIEWER = {Eugenii\ Shustin},
       DOI = {10.1112/blms/bdu040},
       URL = {https://doi.org/10.1112/blms/bdu040},
}

@misc{Kebekus-Rousseau,
  title={C-pairs and their morphisms},
  author={Kebekus, Stefan and Rousseau, Erwan},
  howpublished ={\url{https://arxiv.org/abs/2407.10668}},
  year={2024}
}

@misc{Kawakami-Witaszek,
  title={Higher {F}-injective singularities},
  author={Kawakami, Tatsuro and Witaszek, Jakub},
  howpublished ={\url{https://arxiv.org/abs/2412.08887}},
  year={2024}
}

@incollection {Gab04,
    AUTHOR = {Gabber, Ofer},
     TITLE = {Notes on some {$t$}-structures},
 BOOKTITLE = {Geometric aspects of {D}work theory. {V}ol. {I}, {II}},
     PAGES = {711--734},
 PUBLISHER = {Walter de Gruyter, Berlin},
      YEAR = {2004},
   MRCLASS = {14F05 (14F43)},
  MRNUMBER = {},
MRREVIEWER = {Maurizio Cailotto},
}

@article {LS-testideal,
    AUTHOR = {Lyubeznik, Gennady and Smith, Karen E.},
     TITLE = {On the commutation of the test ideal with localization and
              completion},
   JOURNAL = {Trans. Amer. Math. Soc.},
  FJOURNAL = {Transactions of the American Mathematical Society},
    VOLUME = {353},
      YEAR = {2001},
    NUMBER = {8},
     PAGES = {3149--3180},
      ISSN = {0002-9947,1088-6850},
   MRCLASS = {13A35},
  MRNUMBER = {1828602},
MRREVIEWER = {Ian\ M.\ Aberbach},
       DOI = {10.1090/S0002-9947-01-02643-5},
       URL = {https://doi.org/10.1090/S0002-9947-01-02643-5},
}

@article {BCHM,
    AUTHOR = {Birkar, Caucher and Cascini, Paolo and Hacon, Christopher D.
              and McKernan, James},
     TITLE = {Existence of minimal models for varieties of log general type},
   JOURNAL = {J. Amer. Math. Soc.},
  FJOURNAL = {Journal of the American Mathematical Society},
    VOLUME = {23},
      YEAR = {2010},
    NUMBER = {2},
     PAGES = {405--468},
      ISSN = {0894-0347,1088-6834},
   MRCLASS = {14E30 (14E05)},
  MRNUMBER = {2601039},
MRREVIEWER = {Mark\ Gross},
       DOI = {10.1090/S0894-0347-09-00649-3},
       URL = {https://doi.org/10.1090/S0894-0347-09-00649-3},
}
\bibliographystyle{alpha}

\bigskip

\end{document}